\documentclass[11pt, a4paper, reqno]{amsart}

\usepackage{needspace}

\usepackage[T1]{fontenc}
\usepackage{palatino}
\usepackage{eucal}
\usepackage{hyperref}
\usepackage{amssymb,upref,enumerate}
\usepackage{tikz}
\usepackage{amsmath,amssymb,yfonts,amscd,amsthm,amsxtra,mathrsfs,mathabx}
\usepackage{latexsym}
\usepackage{esint}
\usepackage{mathtools}

\usepackage{resmes}

\usepackage{xcolor}
\usepackage{amsthm}

\usepackage{framed}

\colorlet{shadecolor}{gray!15}

\def\XXint#1#2#3{{\setbox0=\hbox{$#1{#2#3}{\int}$} 
\vcenter{\hbox{$#2#3$}}\kern-.5\wd0}}

\newcommand{\ra}{\rightarrow}

\newcommand{\bey}{\begin{eqnarray*}}
\newcommand{\eey}{\end{eqnarray*}}
\newcommand{\ba}{\begin{align}}
\newcommand{\ea}{\end{align}}
\newcommand{\bea}{\begin{align*}}
\newcommand{\ena}{\end{align*}}
\newcommand{\be}{\begin{equation}}
\newcommand{\ee}{\end{equation}}
\newcommand{\R}{\mathbb R}
\newcommand{\Z}{\mathbb Z}

\newcommand{\Dy}{\mathscr D}
\newcommand{\N}{\mathbb N}

\newcommand{\D}{\mathcal D}

\newcommand{\A}{\textup A}

\newcommand{\Haus}{\mathcal H }

\newcommand{\ep}{\epsilon}

\newcommand{\bc}{\begin{center}}
\newcommand{\ec}{\end{center}}

\newcommand{\vp}{\varphi}

\newcommand{\al}{\alpha}

\newcommand{\dx}{\,{\mathrm dx}}
\newcommand{\dmu}{\,{\mathrm d\mu}}

\newcommand{\dy}{\,{\mathrm dy}}
\newcommand{\dt}{\,{\mathrm dt}}

\newcommand{\Lup}{\textup{L}}
\newcommand{\Leb}{\mathcal{L}}
\newcommand{\BV}{\textup{BV}}
\newcommand{\Ws}{\textup{W}}

\newcommand{\Ldash}{\textup{\L}}
\newcommand{\Con}{\textup{C}}
\newcommand{\Har}{\textup{H}}

\DeclareMathOperator{\Lip}{\textup{Lip}}
\DeclareMathOperator{\Div}{\textup{div}}
\DeclareMathOperator{\Per}{\textup{Per}}

\DeclareMathOperator{\supp}{\textup{supp}}

\newtheorem{theorem}{Theorem}[section]
\newtheorem{lemma}[theorem]{Lemma}
\newtheorem{corollary}[theorem]{Corollary}

\theoremstyle{definition}

\newtheorem{conj}[theorem]{Conjecture}

\theoremstyle{remark}
\newtheorem{remark}[theorem]{Remark}

\begin{document}


\subjclass[2020]{Primary 46E35; Secondary 26D10, 42B25, 49Q20}

\keywords{Weighted Sobolev inequalities, Meyers--Ziemer inequality, maximal function, isoperimetric inequalities, endpoint estimates}

\title[Weighted Sobolev Inequalities]{Weighted Sobolev Inequalities via the Meyers--Ziemer Framework:
Measures, Isoperimetric Inequalities, and Endpoint Estimates}
\date{\today}
\author{Simon Bortz}

\address{Simon Bortz \\
 Department of Mathematics \\
 University of Alabama \\
 Tuscaloosa, AL 35487, USA}
\thanks{S. Bortz was supported by the Simons Foundation’s Travel Support for Mathematicians program and an AWI-
CONSERVE Fellowship.}
\email{sbortz@ua.edu}

\author{Kabe Moen}

\address{Kabe Moen \\
 Department of Mathematics \\
 University of Alabama \\
 Tuscaloosa, AL 35487, USA}

\email{kabe.moen@ua.edu}

\author{Andrea Olivo}

\address{Andrea Olivo\\
 Department of Mathematics \\
 Facultad de Ciencias Exactas y Naturales, Universidad de Buenos Aires and IMAS-CONICET\\
 Pabell\'on I (C1428EGA), Ciudad de Buenos Aires,
Argentina}
 \thanks{A. Olivo is supported by grants PID2023-146646NB-I00 funded by MICIU/AEI/10.13039/501100011033 and by ESF+, the Basque Government through the BERC 2022–2025 program, and through BCAM Severo Ochoa accreditation CEX2021-001142-S / MICIN / AEI / 10.13039/501100011033.}
\email{aolivo@dm.uba.ar}

\author{Carlos P\'erez }

\address{Carlos P\'erez\\
 Department of Mathematics \\
 University of the Basque Country \\
 Ikerbasque and BCAM \\
 Bilbao, Spain}
\thanks{C. P\'erez is supported by the Spanish government through the grant PID2023-146646NB-I00 and by Severo Ochoa accreditation CEX2021-001142-S both at BCAM and also by the Basque Government through grant IT1615-22 at the University of the Basque Country and by the BERC programme 2022-2025 at BCAM}
\email{cperez@bcamath.org}

\author{Ezequiel Rela} 
\address{Ezequiel Rela\\ 
Department of Mathematics\\
University of Buenos Aires and IMAS- CONICET-UBA
}
\email{erela@dm.uba.ar}

\maketitle

 \begin{abstract} We establish a new global endpoint Sobolev inequality for measures that extends the classical theorem of Meyers–Ziemer by placing a maximal function on the right-hand side. This result has several significant consequences. It extends naturally to functions of weighted bounded variation and yields corresponding capacity and isoperimetric inequalities. The inequality is also closely connected to endpoint estimates for fractional operators, including bounds for fractional maximal functions and Hardy space endpoint estimates for the Riesz potential. Our main inequality yields a family of endpoint inequalities, characterized in terms of subrepresentation formulas, Lorentz space improvements, and isoperimetric inequalities for measures and bounded open sets. When one moves away from the endpoint to $p>1$, the analogous inequalities no longer hold in general; however, we identify a sharp bumped maximal function for which the corresponding non-endpoint inequality is valid. Finally, we show that this framework yields new $(p,p)$ two-weight Sobolev inequalities.
 \end{abstract}

\section{Introduction}

\subsection{Classical Sobolev and Isoperimetric Inequalities}
Consider the classical Gagliardo-Nirenberg-Sobolev (GNS) inequality:
\begin{equation}\label{GNS}
\|u\|_{\Lup^{p^*}(\R^n)} \le s_{n,p} \|\nabla u\|_{\Lup^p(\R^n)},
\end{equation}
which holds for $1 \le p < n$, $p^* = \frac{np}{n-p}$, and $u \in \Con_c^\infty(\R^n)$.  
If one takes $\dot\Ws^{1,p}(\R^n)$ to be the completion of $\Con_c^\infty(\R^n)$ with respect to the norm $\|\nabla u\|_{\Lup^p(\R^n)}$ (which is not a norm in general, but it is a norm on $\Con_c^\infty(\R^n)$), then one obtains the Sobolev embedding
$\Ws^{1,p}(\R^n) \hookrightarrow \Lup^{p^*}(\R^n).$

The endpoint GNS inequality,
\begin{equation}\label{GN}
\|u\|_{\Lup^{\frac{n}{n-1}}(\R^n)} \le s_{n,1} \|\nabla u\|_{\Lup^1(\R^n)},
\end{equation}
is particularly important. First, it implies the general case $1<p<n$ by scaling the function. More importantly, it is equivalent to the isoperimetric inequality:
\[
\Per(\Omega) \ge c_n \Leb(\Omega)^{\frac{n-1}{n}},
\]
where $\Per(\Omega)$ denotes the perimeter of the set $\Omega$ and $\mathcal L$ is Lebesgue measure on $\R^n$.  
For sufficiently regular sets, the perimeter coincides with the Hausdorff measure of the boundary~$\partial \Omega$.
The sharp constant in the Sobolev inequality was found by Talenti \cite{Tal} and gives the best constant in the isoperimetric equality via 
\[
c_n = s_{n,1}^{-1} = n v_n^{1/n},
\]
where 
\[
v_n = \Leb(B_1(0)) = \frac{\pi^{n/2}}{\Gamma\big(\frac{n}{2}+1\big)},
\]
is the volume of the unit ball in $\R^n$.

Estimate \eqref{GN} was extended in \cite{PR2} to the setting of locally finite Borel measures $\mu$ as follows: there exists a dimensional constant $c_n$ such that, for any locally finite Borel measure $\mu$ and any Lipschitz function $u$ with compact support,
\begin{equation}\label{GN-mu}
\left(\int_{\mathbb{R}^n} |u(x)|^{n'}\dmu(x) \right)^{\frac1{n'}}
\leq c_n \int_{\mathbb{R}^n} |\nabla u(x)| (\textup{M}\mu(x))^\frac{1}{n'}\dx,
\end{equation}
where $\textup{M}$ denotes the usual Hardy--Littlewood maximal operator and $n'=\frac{n}{n-1}=1^*$. As a consequence of this estimate, an isoperimetric inequality involving measures was also established in \cite{PR2}: if $\Omega \subseteq \mathbb{R}^n$ is a bounded domain with smooth boundary $\partial \Omega$ and $\mu$ is a measure, then
\begin{equation}\label{isoperintro}
\mu(\Omega)^{\frac{1}{n'}} \leq c_n \int_{\partial \Omega} (\textup{M}\mu)^\frac{1}{n'} \, \mathrm{d}\mathcal H^{n-1},
\end{equation}
where $\Haus^{n-1}$ denotes the $(n-1)$-dimensional Hausdorff measure (see Section \ref{prelim}). The main objectives of this paper are to show that \eqref{GN-mu} is in fact part of a broader family of endpoint measure-theoretic Sobolev inequalities, and to extend the isoperimetric inequality \eqref{isoperintro} to more general sets by substantially relaxing the smoothness assumption on the boundary.

We emphasize that the constant $c_n$ above is not necessarily the same as in \eqref{GN-mu}. It would be interesting to determine whether the sharp constant in this setting agrees with the classical isoperimetric constant, namely in the case $\mu=\Leb$.

\subsection{Weighted and Two-Weight Sobolev Inequalities}

We are interested in GNS inequalities under a change of measure. As motivation, consider a quasiconformal change of variables in inequality \eqref{GNS}. Let $f: \R^n \to \R^n$ be a bijective map in $\Ws_{\textup{loc}}^{1,1}(\R^n,\R^n)$ satisfying
\[
|Df(x)|^n \le K J_f(x),
\]
where $Df(x)$ is the Jacobian matrix, $|Df(x)|$ its operator norm, and $J_f(x) = \det Df(x)$ the Jacobian determinant.  
After the change of variables inequality \eqref{GNS} becomes
\[
\left( \int_{\R^n} |u(x)|^{p^*} J_f(x) \dx \right)^{\frac1{p^*}} 
\le s_{n,p} K^{\frac1n} \left( \int_{\R^n} |\nabla u(x)|^p J_f(x)^{1 - \frac{p}{n}} \dx \right)^{\frac1p}, \qquad u \in \Con_c^\infty(\R^n).
\]
This leads naturally to the study of general weighted Sobolev inequalities of the form
\begin{equation}\label{weightedSob}
\|u\|_{\Lup^{p^*}(w)} \le C \|\nabla u\|_{\Lup^p(w^{1-\frac{p}{n}})}, \qquad 1\leq p<n.
\end{equation}
In general, the precise conditions on a weight $w$ under which \eqref{weightedSob} holds are difficult to characterize.  

One approach, introduced by David and Semmes \cite{DS} (see also Semmes \cite{Sem}), is based on the notion of \emph{strong \(A_\infty\) weights}, also referred to as \emph{metric doubling measures}. We will return to the work of David and Semmes in Section \ref{oneweight}.

An alternative approach to that of David and Semmes is the following. Consider a weight $w$ such that, for some constant $c_w$, the following estimate holds for any Lipschitz function $u$ with compact support:
\begin{equation}\label{weak-key estimate}
\|u\|_{\Lup^{n',\infty}(w)} \le c_{w} \|\nabla u\|_{\Lup^1(w^\frac1{n'})}.
\end{equation}
Then, by the truncation method \cite{Maz} (see Theorem \ref{weakstrong} in Section \ref{twoweight}), there exists a universal constant $\lambda$ such that we can lift this to the strong estimate \eqref{weightedSob}, namely:
\[
\|u\|_{\Lup^{n'}(w)} \le c_{w}\lambda \|\nabla u\|_{\Lup^1(w^\frac1{n'})}.
\]
Now, using a scaling argument (Theorem \ref{wp1imply} in Section \ref{oneweight}), we obtain
\begin{equation}\label{Sob-cond-w}
\|u\|_{\Lup^{p^*}(w)} \le c_{w,p} \|\nabla u\|_{\Lup^p(w^{1-\frac{p}{n}})}, \qquad 1 \le p < n.
\end{equation}
Thus, the fundamental issue is to identify the minimal condition required on $w$ to derive the weak type estimate \eqref{weak-key estimate}. This is formulated as Conjecture \ref{Ainftyconj} in Section \ref{oneweight}.

Now, in view of \eqref{GN-mu}, it is natural to ask whether an analogue of \eqref{GN-mu} remains valid for the exponent pair $(p,p^*)$ with $p>1$. In other words, can one establish an $\Lup^p$ version of \eqref{GN-mu}, namely
\begin{equation}\label{Sob-mu}
\left(\int_{\mathbb{R}^n} |u(x)|^{p^*}w(x)\dx) \right)^{\frac1{p^*}}
\leq c_{n,p} \left(\int_{\mathbb{R}^n} |\nabla u(x)|^p \textup{M}w(x)^{1-\frac{p}{n}}\dx\right)^{\frac1{p}}\text{?}
\end{equation}
In fact, this estimate is false. One must instead replace the operator $\textup{M}$ by larger iterated operators $\textup{M}^k$, where the integer $k$ depends on $p$ (see Theorem \ref{bumpreplacep>1}). We refer the reader to Section \ref{twoweight} for further discussion and related results.

On the other hand, it is possible to deduce the following estimate by using \eqref{GN-mu} and a scaling variant of the argument described above (see the proof of Theorem \ref{wp1imply}). The resulting inequality applies to weights rather than general measures. Specifically, for any weight $w$ on $\mathbb{R}^n$ and $1 \le p < n$, we have
\begin{equation}\label{eq:P(p,p*)-local-avg}
\left(\int_{\R^n} |u(x)|^{p^*}\,w(x)\dx \right)^{\frac1{p^*}}
\le c_{n,p} \left( \int_{\R^n} |\nabla u(x)|^p 
\frac{\textup{M}w(x)^{\frac{p}{n'}}}{w(x)^{p-1}} \dx \right)^{\frac{1}{p}}.
\end{equation}
Observe that the weight on the right-hand side is pointwise larger than $w^{1-\frac{p}{n}}$ appearing in \eqref{Sob-cond-w}. Inequality \eqref{eq:P(p,p*)-local-avg} follows as a corollary of \cite[Theorem 1.21]{PR1}, where a localized version was established using a localized form of \eqref{GN-mu}.

To prove inequality \eqref{weak-key estimate}, we can invoke the classical theory of Muckenhoupt weights via the Riesz potential operator
\[
\textup{I}_\al f(x)
= \frac{1}{\gamma(\al)} \int_{\R^n} \frac{f(y)}{|x-y|^{n-\al}}\dy,
\qquad 0 < \al < n,
\]
where
\begin{equation*}\label{fracconst}
\gamma(\al)
= \frac{2^\al \pi^{n/2} \Gamma(\al/2)}{\Gamma((n-\al)/2)}.
\end{equation*}

The Riesz potential $\textup{I}_1$ plays a central role in Sobolev inequalities due to the subrepresentation formula
\begin{equation}\label{subrepresentation}
|u(x)| \le \frac{\gamma(1)}{\omega_{n-1}}\,\textup{I}_1(|\nabla u|)(x),
\qquad u \in \Lip_c(\R^n),
\end{equation}
where $\omega_{n-1} = n v_n$ denotes the surface measure of the unit sphere.  
Although \eqref{subrepresentation} is usually stated for $u \in \Con_c^\infty(\R^n)$, it remains valid for $u \in \Lip_c(\R^n)$ and for all $x \in \R^n$.  Muckenhoupt and Wheeden \cite{MW} characterized the weights for which the Riesz potential is bounded by showing that the inequality
\[
\|\textup{I}_\alpha f\|_{\Lup^q(w)} \le C \|f\|_{\Lup^p(w^{\frac{p}{q}})},
\qquad
{1< p}<\tfrac{n}{\alpha},
\quad
\tfrac1q=\tfrac1p-\tfrac{\alpha}{n},
\]
holds if and only if $w$ belongs to the Muckenhoupt class $\A_{1+\frac{q}{p'}}$.  At the endpoint, $p=1$,  the weak type estimate
$$\|\textup I_\al f\|_{\Lup^{\frac{n}{n-\al},\infty}(w)}\leq C\|f\|_{\Lup^1(w^{\frac{n}{n-\al}})}$$ holds
if and only if $w\in \A_1$. Combining \eqref{subrepresentation} with this result in the case $\alpha=1$ and $q=p^*$ shows that the weighted Sobolev inequality \eqref{weightedSob} holds whenever
$w\in \A_{\frac{p^*}{n'}}$. Indeed, a straightforward computation gives
$1+p^*/p' = p^*/n' = p(n-1)/(n-p)$.  As a concrete example, for power weights one obtains the inequality
\[
\|u\|_{\Lup^{p^*}(|x|^\lambda)} \le C \|\nabla u\|_{\Lup^p(|x|^{\lambda(1-p/n)})},
\]
valid for $-n<\lambda< n p^*/p'$. In Section~\ref{oneweight} we study the one-weight Sobolev inequality \eqref{weightedSob} in greater detail.

Consider more general two-weight Sobolev inequalities of the form
\begin{equation}\label{twowSob}
\|u\|_{\Lup^q(w)} \le C\,\|\nabla u\|_{\Lup^p(v)}.
\end{equation}
Such inequalities also play a central role in quantitative uncertainty principles {(see 
\cite{CW, Feff, CWW, LL, Per95}).}  In the case $1 < p \le q < \infty$, inequality \eqref{twowSob} is a consequence of Sawyer’s weak boundedness theorem for $\textup{I}_\al$ \cite{Saw84}. 
Specifically, Sawyer proved that the weak type inequality 
\[
\|\textup{I}_\al f\|_{\Lup^{q,\infty}(w)} \le C\,\|f\|_{\Lup^p(v)}
\]
holds if and only if the testing condition
\begin{equation}\label{testing}
\left(\int_Q \textup{I}_\al(\mathbf 1_Q w)^{p'}\, v^{1-p'}\,\mathrm dx\right)^{\frac{1}{p'}} \le C\,w(Q)^{\frac1{q'}}
\end{equation}
holds for all cubes $Q$. Thus, if $(w,v)$ satisfy \eqref{testing} with $\al=1$, then the weak Sobolev inequality holds via \eqref{subrepresentation},
\[
\|u\|_{\Lup^{q,\infty}(w)} \le C\|\textup{I}_1(|\nabla u|)\|_{\Lup^{q,\infty}(w)}\leq C\,\|\nabla u\|_{\Lup^p(v)}.
\]
This can be upgraded to the strong form given in \eqref{twowSob} by the coarea formula or Maz’ya’s truncation method; again we refer to Theorem \ref{weakstrong}.

While the condition \eqref{testing} is necessary and sufficient for the weak boundedness of the Riesz potential—and thus sufficient for the two weight Sobolev inequality—it is not clear that it actually \emph{characterizes} the Sobolev inequality. Moreover, it is somewhat unsatisfactory that the sufficient condition involves the operator $\textup{I}_1$, whereas the inequality itself concerns a function and its gradient.

Thus, a different approach, one initiated by the fourth author, is to seek sharp sufficient conditions, known as bump conditions, involving only the weights $(w,v)$ and not the operator $\textup{I}_1$. One can verify that a necessary condition for the mapping $\textup{I}_\al:\Lup^p(v)\to L^{q,\infty}(w)$ is the two-weight Muckenhoupt condition
\begin{equation}\label{Apq}
\sup_Q \ell(Q)^\al\,\Leb(Q)^{\frac1q-\frac1p}
\left(\fint_Q w\,\mathrm dx\right)^{\frac1q}
\left(\fint_Q v^{1-p'}\,\mathrm dx\right)^{\frac1{p'}}
<\infty.
\end{equation}
Here we are using the standard notation of an integral average
$$\fint_Q f\dx=\frac{1}{\mathcal L(Q)}\int_Q f(x)\dx.$$
A natural direction, then, is to enlarge the averages in this condition in a way that remains both sharp and tractable—this leads to the so-called bump conditions. The fourth author \cite{Per94} showed that assuming sufficiently bumped averages on both weights yields the strong-type inequality $\textup{I}_\al:\Lup^p(v)\to \Lup^q(w)$, and therefore the Sobolev inequality. In the strict upper-triangular case $1<p<q<\infty$, this was refined by the second author and Cruz-Uribe \cite{CM}, who proved that if $(w,v)$ satisfy the logarithmic bump (see Section \ref{prelim} for definitions of the logarithmic averages)
\begin{equation}\label{logbump}
\sup_Q \ell(Q)^\al\,\Leb(Q)^{\frac1q-\frac1p}
\|w\|_{\Ldash(\log \Ldash)^{\frac{q}{p'}+\ep}(Q)}
\left(\fint_Q v^{1-p'}\,\mathrm dx\right)^{\frac1{p'}}
<\infty,
\end{equation}
for some $\ep>0$, then $\textup{I}_\al:\Lup^p(v)\to \Lup^{q,\infty}(w)$. Taking $\al=1$, we see that \eqref{twowSob} follows from \eqref{subrepresentation} and Theorem \ref{weakstrong}. We also show that the condition \eqref{logbump} is sharp for the Sobolev inequality \eqref{twowSob} in the sense that one cannot take $\ep=0$ in \eqref{logbump} and still obtain a sufficient condition.

In the diagonal case $p=q$, the optimal bump conditions remain unknown. In fact, the best currently available sufficient bump condition for the $(p,p)$ two weight Sobolev inequality
\begin{equation*}\label{pptwowSob}
\|u\|_{\Lup^p(w)} \le C\,\|\nabla u\|_{\Lup^p(v)}
\end{equation*}
is given by
\begin{equation}\label{nonoptlogbump}
\sup_Q \ell(Q)\,
\|w\|_{\Ldash(\log \Ldash)^{2p-1+\ep}(Q)}
\left(\fint_Q v^{1-p'}\,\mathrm dx\right)^{\frac1{p'}}
<\infty.
\end{equation}
The astute reader will observe that taking $p=q$ in \eqref{logbump} suggests a logarithmic power of $\frac{p}{p'}+\ep = p-1+\ep$, which is strictly smaller than $2p-1+\ep$. We will revisit two-weight Sobolev inequalities in Section \ref{twoweight} and show that, in the diagonal case $p=q$, the bump conditions can indeed be improved to the optimal ones (see Theorem \ref{optimalppbump}).

\subsection{Endpoint Inequalities and Maximal Operators}

It is well known that endpoint inequalities often imply their non-endpoint counterparts. As noted above, any estimate for the Riesz potential yields a corresponding Sobolev inequality. 
{For this reason, establishing sharp endpoint bounds for Riesz potentials remains} an active area of research. To illustrate the subtleties involved, we begin by recalling the analogous endpoint bounds for the fractional maximal function, defined for a locally finite Borel measure
\[
(\textup{M}_\alpha \mu)(x)
= \sup_{r>0} r^\alpha \fint_{B_r(x)}  \dmu,
\qquad 0 \le \alpha < n.
\]
When $\dmu=|f|\dx$ for $f\in\Lup_{\text{loc}}^1(\R^n)$, we write $\textup{M}_\al f$.  When $\alpha=0$, $\textup{M}_0=\textup{M}$ is the Hardy--Littlewood maximal operator. For $0<\alpha<n$, $\textup{M}_\alpha$ is the natural maximal operator associated with the Riesz potential, and in fact
\[
\textup{M}_\alpha f(x)\le c_{n,\alpha}\,\textup{I}_\alpha f(x),
\qquad f\ge 0.
\]

A classical result of {Fefferman and Stein}~\cite{St} for the Hardy--Littlewood maximal operator asserts that the weak-type inequality
\begin{equation}\label{WeakMM}
\|\textup{M}f\|_{\Lup^{1,\infty}(w)}
\le { C_n}\,\|f\|_{\Lup^1(\textup{M}w)}
\end{equation}
holds for any weight $w$ {or measure}. This estimate has applications in harmonic analysis and PDE, as it implies vector-valued bounds for $\textup{M}$ that play a central role in Littlewood--Paley theory. Sawyer~\cite{Saw81} later showed that inequality~\eqref{WeakMM} extends to the fractional setting, namely,
\begin{equation} \label{Malweak}
\|\textup{M}_\alpha f\|_{\Lup^{1,\infty}(w)}
\le C\,\|f\|_{\Lup^1(\textup{M}_\alpha w)}.
\end{equation}

In contrast, for singular integral operators the corresponding endpoint estimate fails. For example, if $\textup{H}$ denotes the Hilbert transform, Muckenhoupt and Wheeden conjectured that
\begin{equation}\label{WeakTM}
\|\textup{H}f\|_{\Lup^{1,\infty}(w)}
\le C\,\|f\|_{\Lup^1(\textup{M}w)}.
\end{equation}
This inequality was later shown to be \emph{false}; see \cite{Reg,RegTh,HM}. P\'erez~\cite{Per94Lon} (see also \cite{HP}) proved a substitute result: if T is a singular integral operator and $\varepsilon>0$, then
\begin{equation}\label{WeakbumpTM}
\|\textup{T}f\|_{\Lup^{1,\infty}(w)}
\leq {C_{\varepsilon}}\,\|f\|_{\Lup^1(\textup{M}_{\Lup(\log \Lup)^\varepsilon} w)}
\end{equation}
where $\textup{M}_{\Lup(\log \Lup)^{\varepsilon}}$ is an Orlicz maximal function (see \eqref{OrliczMax}). That is, inequality~\eqref{WeakTM} becomes valid once the maximal function applied to the weight on the right-hand side is suitably bumped.

A similar phenomenon occurs in the fractional setting. The fractional maximal function inequality~\eqref{Malweak} holds, but the fractional integral operators fail to satisfy general weak endpoint estimates. Carro, P\'erez, Soria, and Soria~\cite{CPSS} showed that the weak-type inequality
\begin{equation}\label{falseendptIal}
\|\textup{I}_\alpha f\|_{\Lup^{1,\infty}(w)}
\le C\,\|f\|_{\Lup^1(\textup{M}_\alpha w)}
\end{equation}
does not hold for arbitrary weights. They did, however, establish the following bumped replacement:
\begin{equation}\label{weakbumpIal}
\|\textup{I}_\alpha f\|_{\Lup^{1,\infty}(w)}
\le {C_{\varepsilon}}\,\|f\|_{\Lup^1\bigl(\textup{M}_\alpha(\textup{M}_{\Lup(\log \Lup)^{\varepsilon}} w)\bigr)},
\end{equation}
which holds for every weight and every $\varepsilon>0$ (again M$_{\Lup(\log \Lup)^\ep}$ is an Orlicz maximal funtion).

An unsatisfactory feature of~\eqref{weakbumpIal} is that when $\alpha=0$, the formulation fails to recover the expected endpoint behavior. Although the Riesz potential of order $\alpha=0$ is undefined, the maximal operator appearing on the right-hand side of~\eqref{weakbumpIal} should, at least formally, reduce to the maximal function in~\eqref{WeakbumpTM}. This, however, is not the case. Indeed,
\[
\textup{M}_0\bigl(\textup{M}_{\Lup(\log \Lup)^{\varepsilon}} w\bigr)
\approx \textup{M}_{\Lup(\log \Lup)^{1+\varepsilon}} w,
\]
whereas the maximal function in~\eqref{WeakbumpTM} involves only a single logarithmic bump with exponent $\varepsilon$.  {This discrepancy accounts for the loss of sharpness in the $\Lup^p$ bump conditions in \eqref{nonoptlogbump}, leading to their suboptimality.}


\subsection{The Meyers--Ziemer Theorem}

There is another closely related endpoint result that plays a fundamental role in the theory of Sobolev spaces. The Meyers–Ziemer theorem states that for a locally finite positive Borel measure 
$\mu$, the Sobolev inequality,
$$\int_{\R^n}|u|\dmu\leq C\int_{\R^n}|\nabla u|\dx, \qquad u\in \Con_c^\infty(\R^n)$$
holds if and only if the measure satisfies an upper \emph{Ahlfors-David Regular} of order $n-1$: {for any} ball $B_r(x)\subseteq \R^n$
\begin{equation}\label{Linfty}\mu(B_r(x))\leq Cr^{n-1}\qquad x\in \R^n,r>0.\end{equation}
This theorem is central to Sobolev space theory, with important applications including the characterization of Sobolev traces and the identification of the dual of $\BV(\R^n)$.

\section{Main Results}

The purpose of this article is to place these results within a unified framework and to observe that the Meyers--Ziemer theorem admits a natural and simple generalization with far-reaching consequences. In particular, we present a generalized version of the Meyers–Ziemer theorem that simultaneously recovers several known results and yields new ones.

\begin{theorem}\label{Gradient}
Let $\mu$ be a locally finite Borel measure on $\R^n$. Then there exists a dimensional constant $C=C_n$, independent of $\mu$, such that
\begin{equation}\label{newMZthm}
\int_{\R^n} |u| \, \mathrm{d}\mu
\leq
C \int_{\R^n} |\nabla u(x)| \, \textup{M}_1 \mu(x)\, \mathrm{d}x,
\qquad
u \in \Lip_c(\R^n).
\end{equation}
\end{theorem}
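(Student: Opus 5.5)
The plan is to reduce \eqref{newMZthm} to an isoperimetric-type inequality for level sets via the coarea formula, and to prove that inequality by a covering argument built on the relative isoperimetric inequality in balls. Fix $u\in\Lip_c(\R^n)$; replacing $u$ by $|u|$ we may assume $u\ge 0$, since $|\nabla|u||=|\nabla u|$ a.e. For $t>0$ set $E_t=\{u>t\}$. Each $E_t$ is open, because $u$ is continuous, and bounded, because $u$ has compact support. By the layer-cake formula,
\[
\int_{\R^n}u\,\mathrm d\mu=\int_0^\infty \mu(E_t)\,\mathrm dt .
\]
By the coarea formula for Lipschitz functions, applied with the nonnegative Borel weight $\textup{M}_1\mu$ (which is lower semicontinuous, hence Borel), we have
\[
\int_{\R^n}|\nabla u|\,\textup{M}_1\mu\,\mathrm dx=\int_0^\infty\int_{\partial^*E_t}\textup{M}_1\mu\,\mathrm d\Haus^{n-1}\,\mathrm dt ,
\]
and $E_t$ has finite perimeter for a.e.\ $t$. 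It therefore suffices to prove the following set inequality: for every bounded open set $E$ of finite perimeter,
\[
\mu(E)\le C_n\int_{\partial^*E}\textup{M}_1\mu\,\mathrm d\Haus^{n-1}.
\]

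To prove the set inequality, fix $x\in E$. Since $E$ is open, $\Leb(B_r(x)\cap E)/\Leb(B_r(x))=1$ for all small $r$. Since $E$ is bounded, this ratio tends to $0$ as $r\to\infty$. The ratio is continuous in $r$, so there is $r_x$ with $\Leb(B_{r_x}(x)\cap E)=\tfrac12\Leb(B_{r_x}(x))$, and the radii $r_x$ are bounded because $E$ is bounded. The relative isoperimetric inequality in balls then gives
\[
r_x^{\,n-1}\le c_n\,\Per(E,B_{r_x}(x))=c_n\,\Haus^{n-1}\bigl(\partial^*E\cap B_{r_x}(x)\bigr).
\]
Apply the Vitali covering lemma to the family $\{B_{r_x}(x)\}_{x\in E}$. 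This yields pairwise disjoint balls $B_i=B_{r_i}(x_i)$ with $E\subseteq\bigcup_i 5B_i$, and hence $\mu(E)\le\sum_i\mu(5B_i)$.

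The key pointwise estimate concerns points $y\in\partial^*E\cap B_i$. For such $y$ we have $5B_i\subseteq B_{6r_i}(y)$, so by the definition of $\textup{M}_1$,
\[
\textup{M}_1\mu(y)\ge 6r_i\,\frac{\mu(B_{6r_i}(y))}{\Leb(B_{6r_i}(y))}\ge c_n\,\frac{\mu(5B_i)}{r_i^{\,n-1}} .
\]
Combining this with the relative isoperimetric bound gives
\[
\mu(5B_i)=\frac{\mu(5B_i)}{r_i^{\,n-1}}\,r_i^{\,n-1}\le C_n\,\frac{\mu(5B_i)}{r_i^{\,n-1}}\,\Haus^{n-1}(\partial^*E\cap B_i)\le C_n\int_{\partial^*E\cap B_i}\textup{M}_1\mu\,\mathrm d\Haus^{n-1}.
\]
Summing over $i$ and using the disjointness of the $B_i$ proves the set inequality. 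Integrating it in $t$ and using the two identities above then gives \eqref{newMZthm}.

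The main obstacle is making the geometric step rigorous: choosing the radius $r_x$ so that the relative isoperimetric inequality applies, and checking that the reduced boundary carries enough $\Haus^{n-1}$-mass inside each selected ball. Openness and boundedness of the level sets are exactly what make the choice of $r_x$ work, and they also guarantee $\mu(E)<\infty$, since $\mu$ is locally finite. The remaining points are routine. These are the Borel measurability of $\textup{M}_1\mu$, which is needed for the weighted coarea formula, and the fact that the set of levels $t$ where $E_t$ fails to have finite perimeter is Lebesgue-null, which the coarea formula itself provides.
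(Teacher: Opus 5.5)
Your proposal is correct and follows essentially the same route as the paper: a coarea reduction to bounded open level sets, a half-density radius $r_x$ fed into the relative isoperimetric inequality, a Vitali covering, and the pointwise lower bound for $\textup{M}_1\mu$ on $\partial^*E\cap B_i$. The only difference is the covering step. You apply the countable Vitali lemma with $5B_i$ directly to $E$, using the bound on the radii. The paper instead passes to a compact $K\subseteq E_t$ and extracts a finite disjoint family with $K\subseteq\bigcup_j 3B_j$, which then implicitly uses inner regularity of $\mu$ to bound $\mu(E_t)$.
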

Several remarks are in order. First, in comparison with the classical
Meyers--Ziemer theorem, condition \eqref{Linfty} is equivalent to the
boundedness $\textup{M}_1\mu \in \textup L^\infty(\R^n)$. Moreover, whenever
$\textup{M}_1\mu < \infty$ almost everywhere with respect to Lebesgue
measure, the weight $\textup{M}_1\mu$ belongs to the Muckenhoupt class
$\textup{A}_1$ (see Theorem \ref{MalA1}). {This naturally raises the question of under which conditions on $\mu$ the estimate $\textup M_\alpha \mu < \infty$ holds.}


In the case $\alpha=0$, the condition $\textup{M}\mu<\infty$ Lebesgue almost everywhere was originally characterized by Fiorenza and Krbec \cite{FK} (see also a version for measures in \cite{BGKM}). In this work, we obtain a characterization of the finiteness of $\textup{M}_\alpha \mu$ in terms of Hausdorff measure, which provides a more natural framework. The following theorem is new and of independent interest.

\begin{theorem} \label{Malfinite}
Let $\mu$ be a locally finite Borel measure on $\R^n$ and let
$0 \le \alpha < n$. Then the following are equivalent:
\begin{enumerate}
\item There exists $x_0 \in \R^n$ such that $(\textup{M}_\alpha \mu)(x_0) < \infty$;
\item $\textup{M}_\alpha \mu < \infty$, $\mathcal H^{n-\alpha}$--almost
everywhere, that is, 
\[
\mathcal H^{n-\alpha}\bigl(\{x \in \R^n : (\textup{M}_\alpha \mu)(x) = \infty\}\bigr) = 0.
\]
\end{enumerate}
\end{theorem}
Theorem \ref{Malfinite} asserts that the fractional maximal function $\textup{M}_\alpha \mu$ is either identically infinite or finite outside a set of $\mathcal H^{n-\alpha}$–measure zero. In what follows, many of our results depend on the weight $\textup{M}_\alpha \mu$ associated with a locally finite Borel measure $\mu$. We therefore assume that $\mu$ is such that $\textup M_\al\mu$ is well defined; equivalently, that $(\textup{M}_\alpha \mu)(x_0)<\infty$ at some point $x_0\in\R^n$. Under this assumption, the weight $\textup{M}_\al\mu$ is well behaved: it is lower semicontinuous and belongs to the class $\A_1$ (see Theorem \ref{MalA1} in Section~\ref{prelim} for details).
\subsection{Consequences: BV, Capacity, and Isoperimetric Inequalities}

Inequality \eqref{newMZthm} allows for more flexibility than the original Meyers-Ziemer theorem.  For example, a direct application of the Meyer-Ziemer theorem one can take $\dmu=\frac1{|x|}\dx$ to see that $\mu$ satisfies \eqref{Linfty} and obtain the Hardy-Leray inequality.  However, inequality \eqref{newMZthm} allows for $\dmu=\frac{1}{|x|^\lambda}\dx$ for $1\leq \lambda<n$ and becomes
$$\int_{\R^n}\frac{|u(x)|}{|x|^\lambda}\dx\leq C\int_{\R^n}\frac{|\nabla u(x)|}{|x|^{1-\lambda}}\dx.$$  

The inequality \eqref{newMZthm} can be derived from a local Poincar\'e inequality established by the {fourth author} together with Franchi and Wheeden \cite{FPW}, and more recently revisited in \cite{MPW}. We emphasize, however, that our proof of Theorem~\eqref{Gradient} is considerably simpler. It proceeds via a direct modification of the original argument of Meyers and Ziemer, avoiding the intermediate local Poincar\'e machinery and yielding a more transparent approach.

There are several consequences of Theorem~\ref{Gradient}.  
We begin by noting that it admits a natural extension to weighted $\BV$ functions.  Let $w$ be a positive lower semicontinuous weight. We say that a function 
$u \in \Lup^{1}(w)$ belongs to $\BV(w)$ if
\[
|Du|_{w}(\mathbb{R}^{n})
:= \sup \left\{
\int_{\mathbb{R}^{n}} u\, \Div \boldsymbol{\Phi}\dx
:\;
\boldsymbol{\Phi} \in \Con_c^{1}(\mathbb{R}^{n},\mathbb{R}^{n}),\ 
|\boldsymbol{\Phi}(x)| \le w(x), \ \ \forall x\in \R^n
\right\}
< \infty .
\]

If $w \in \A_{1}$ (see Section~\ref{prelim} for our slightly nonstandard definition of $\A_{1}$), then functions in $\BV(w)$ may be approximated by smooth functions; see Theorem~1.2 in \cite{BGKM}. As mentioned above, if $\textup (\textup{M}_{1}\mu)(x_{0})<\infty$ for some $x_{0}\in\mathbb{R}^{n}$, then the weight $w=\textup{M}_{1}\mu$ is a lower semicontinuous A$_1$ weight (see Theorem \ref{MalA1}). In particular, we obtain the following consequence.

\begin{corollary}\label{BVextens}
Suppose $\mu\ll \mathcal L$ such that
$(\textup{M}_{1}\mu)(x_{0})<\infty$ for some $x_{0}\in\mathbb{R}^{n}$. 
Then, for every $u \in \BV(\textup{M}_{1}\mu)$,
\begin{equation*}\label{BVextensineq}
\|u\|_{\Lup^{1}(\mu)} \le C\, |Du|_{\textup{M}_{1}\mu}(\mathbb{R}^{n}).
\end{equation*}
\end{corollary}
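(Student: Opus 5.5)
The plan is to derive the corollary from Theorem~\ref{Gradient} by approximation, using the approximation result for $\BV$ functions with $\A_1$ weights from \cite[Theorem~1.2]{BGKM}. Set $w=\textup{M}_1\mu$. By Theorem~\ref{MalA1}, the hypothesis $(\textup{M}_1\mu)(x_0)<\infty$ makes $w$ a positive lower semicontinuous $\A_1$ weight. So for $u\in\BV(w)$ there should exist $u_k\in \Con^\infty(\R^n)\cap \Lup^1(w)$ with $u_k\to u$ in $\Lup^1(w)$ and
\[
\int_{\R^n}|\nabla u_k|\,w\dx\to |Du|_w(\R^n).
\]
I will take this as the strict-type approximation supplied by \cite{BGKM}.

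Next I will apply Theorem~\ref{Gradient} to the $u_k$. Those functions are smooth but not compactly supported, so I first multiply by cutoffs $\eta_R$ with $\eta_R=1$ on $B_R$, support in $B_{2R}$ and $|\nabla\eta_R|\le 2/R$. Then
\[
\int |u_k\eta_R|\dmu\le C\int|\nabla u_k|\,w\dx+\frac{2C}{R}\int_{B_{2R}\setminus B_R}|u_k|\,w\dx .
\]
The point is that the error term is bounded by $\frac{2C}{R}\|u_k\|_{\Lup^1(w)}\to0$, simply because $u_k\in\Lup^1(w)$; no extra decay of $w$ is needed. Letting $R\to\infty$ and using monotone convergence on the left gives $\|u_k\|_{\Lup^1(\mu)}\le C\int|\nabla u_k|\,w\dx$.

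It remains to pass $k\to\infty$, and this is the step I expect to be the main obstacle. The right-hand side tends to $C|Du|_w(\R^n)$. On the left, convergence in $\Lup^1(w)$ gives a subsequence with $u_k\to u$ Lebesgue-a.e. This is exactly where $\mu\ll\mathcal L$ is used: a.e. convergence then also holds $\mu$-a.e. Fatou's lemma yields
\[
\int|u|\dmu\le\liminf_k\int|u_k|\dmu\le C|Du|_w(\R^n).
\]

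If the \cite{BGKM} approximation only gives $\limsup_k\int|\nabla u_k|w\le |Du|_w(\R^n)$, that suffices, since the inequality only needs that direction. If instead it produces approximants in a slightly different class, I would mollify them and use the $\A_1$ property $w*\phi_\epsilon\le Cw$ to control the weighted gradient norms.
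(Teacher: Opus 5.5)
Your proposal is correct and follows the paper's own argument. Both approximate $u$ using the $\A_1$-weighted $\BV$ approximation of \cite{BGKM} (available because $\textup{M}_1\mu$ is a lower semicontinuous $\A_1$ weight by Theorem~\ref{MalA1}), apply Theorem~\ref{Gradient} to the approximants, and pass to the limit with Fatou's lemma, using $\mu\ll\mathcal L$ to turn Lebesgue-a.e.\ convergence into $\mu$-a.e.\ convergence. Your cutoff step is needed only if the approximants are not compactly supported (the paper takes them in $\Con_c^\infty(\R^n)$ directly), and you handle it correctly, since the error term is at most $\frac{C}{R}\|u_k\|_{\Lup^1(\textup{M}_1\mu)}\to 0$.
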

\begin{remark} We note that extending the theory to weighted BV functions requires the underlying measure to be absolutely continuous with respect to Lebesgue measure; in other words, the result applies to weights. This technical assumption is necessitated by the use of smooth approximation, which holds $\mathcal L$-a.e.

\end{remark}

With this machinery in hand, we derive a new weighted isoperimetric inequality.  Given a lower semicontinuous weight $w$, we say that a measurable set 
$\Omega \subseteq \mathbb{R}^{n}$ has finite $w$-perimeter if 
$\mathbf{1}_{\Omega} \in \BV(w)$ (see Section~\ref{prelim} for the relevant definitions). 
The $w$-perimeter of $\Omega$ is defined by
\[
\Per(\Omega,w) := |D\mathbf 1_\Omega|_{w}.
\]
If $\textup M_1\mu$ is not identically $\infty$, then $\textup M_1\mu$ is finite outside a set of $\mathcal H^{n-1}$-measure zero. In particular, in this case the perimeter can be represented as
\[
\Per(\Omega,\textup M_1\mu)
=\int_{\partial^*\Omega} \textup M_1\mu \,\mathrm d\mathcal H^{n-1},
\]
where $\partial^*\Omega$ denotes the reduced boundary of $\Omega$ (see Section~\ref{prelim}).

\begin{corollary}\label{isoperimend}
Suppose $\Omega \subseteq \mathbb{R}^{n}$ is an open set with finite $\textup{M}_{1}\mu$-perimeter. 
Then
\begin{equation*}\label{isoperimendineq}
\mu(\Omega) \le C \Per(\Omega,\textup{M}_{1}\mu).
\end{equation*}
\end{corollary}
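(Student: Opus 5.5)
We plan to derive Corollary~\ref{isoperimend} by applying Theorem~\ref{Gradient} to smooth approximations of $\mathbf 1_\Omega$, in the spirit of Corollary~\ref{BVextens}. The case $\textup M_1\mu\equiv\infty$ is trivial, since then the perimeter is infinite unless $\Omega$ is null. So by Theorem~\ref{Malfinite} and Theorem~\ref{MalA1} we may assume that $w:=\textup M_1\mu$ is a lower semicontinuous $\A_1$ weight that is finite $\mathcal H^{n-1}$-a.e. Since $\mu$ need not be absolutely continuous, we cannot quote Corollary~\ref{BVextens} directly. Instead we use that $\Omega$ is open, which lets us test $\mu(\Omega)$ from inside.

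\textbf{Step 1 (approximation).} Since $w\in\A_1$ and $\mathbf 1_\Omega\in\BV(w)$, the smooth approximation theorem \cite[Theorem~1.2]{BGKM} yields $u_k\in \Con_c^\infty$ (after a cutoff, using $\mathbf 1_\Omega\in\Lup^1(w)$) with $u_k\to\mathbf 1_\Omega$ in $\Lup^1(w)$ and
\[
\int|\nabla u_k|\,w\dx\to \Per(\Omega,w).
\]
We may take $0\le u_k\le 1$ by truncation, since truncation does not increase the weighted variation.

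\textbf{Step 2 (pass to $\mu$).} Applying Theorem~\ref{Gradient} to $u_k$ gives
\[
\int u_k\dmu\le C\int|\nabla u_k|\,w\dx .
\]
We need $\liminf_k\int u_k\dmu\ge\mu(\Omega)$. Convergence in $\Lup^1(w)$ only controls the limit Lebesgue-a.e., which is useless against a singular $\mu$. This is the main obstacle.

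\textbf{Step 3 (handling the obstacle).} We first try a mollification-based construction. The approximants in \cite{BGKM} are built, as in the Anzellotti--Giaquinta construction, from mollifications $\mathbf 1_\Omega*\varphi_{\varepsilon}$ glued by a partition of unity. At each $x\in\Omega$, which is an interior point, $\mathbf 1_\Omega*\varphi_\varepsilon(x)=1$ once $\varepsilon<\operatorname{dist}(x,\Omega^c)$. Hence $u_k\to1$ pointwise on all of $\Omega$, provided the mollification scales shrink locally, and $u_k\ge0$ everywhere. Fatou's lemma then gives $\mu(\Omega)\le\liminf\int u_k\dmu\le C\Per(\Omega,w)$.

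If the \cite{BGKM} approximation is not transparent enough, we use a fallback that works directly with compact subsets. Fix a compact $K\subset\Omega$ and set $u_\varepsilon=\mathbf 1_\Omega*\varphi_\varepsilon$ for $\varepsilon<\operatorname{dist}(K,\Omega^c)$, so that $u_\varepsilon\ge \mathbf 1_K$. Then
\[
\int|\nabla u_\varepsilon|\,w\dx\le \int (|D\mathbf 1_\Omega|*\varphi_\varepsilon)\,w\dx=\int (w*\varphi_\varepsilon)\,\mathrm d|D\mathbf 1_\Omega| .
\]
The $\A_1$ property gives $w*\varphi_\varepsilon\lesssim \textup M w\lesssim w$, which yields $\mu(K)\le C'\Per(\Omega,w)$, where $C'$ absorbs the $\A_1$ constant of $\textup M_1\mu$. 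That constant is dimensional by Theorem~\ref{MalA1}. Inner regularity of $\mu$ on the open set $\Omega$ then finishes the proof. In this last computation we also use the representation $\Per(\Omega,w)=\int_{\partial^*\Omega}w\,\mathrm d\mathcal H^{n-1}$ stated before the corollary, so that integrating $w$ against $|D\mathbf 1_\Omega|$ is legitimate.
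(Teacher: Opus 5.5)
Your fallback argument is correct and is essentially the paper's proof. Both mollify $\mathbf 1_\Omega$, use openness so that $u_\varepsilon=1$ on compact parts of $\Omega$, apply Theorem~\ref{Gradient}, and bound $\int|\nabla u_\varepsilon|\,\textup M_1\mu\dx$ by $C\,\Per(\Omega,\textup M_1\mu)$ via $\varphi_\varepsilon*w\le \textup Mw\le Cw$ holding everywhere. The differences are cosmetic: the paper moves the mollifier onto the test fields $\boldsymbol{\Phi}$ by duality and concludes with Fatou, while you move it onto $w$ through $|D\mathbf 1_\Omega|$ and conclude with inner regularity.

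Drop the first, BGKM-based route: it is unnecessary and rests on unverified claims about that approximation, namely strict convergence of the weighted gradients and pointwise convergence to $1$ on $\Omega$.
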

\begin{remark}
The constant in inequality \eqref{isoperimendineq} depends on the dimension and, more precisely, on the $\A_1$ constant of $\textup{M}_1\mu$.\end{remark}
\noindent
Corollary~\ref{isoperimend} is, in fact, equivalent to \eqref{newMZthm} via the coarea formula.

\medskip

One may also define the $\BV(w)$--capacity of a set $E \subseteq \mathbb{R}^{n}$ by
\[
\textup{Cap}(E,w)
:= \inf \left\{
\|\nabla u\|_{\Lup^{1}(w)}
:\;
u \in \Con_c^{\infty}(\mathbb{R}^{n})^{+},\ 
u \ge 1 \text{ in a neighborhood of } E
\right\}.
\]
\noindent The following result is an immediate consequence of \eqref{newMZthm} and the definition of capacity.

\begin{corollary}\label{capacity}
Let $\mu$ be a locally finite Borel measure and 
let $\Omega \subseteq \mathbb{R}^{n}$. 
Then
\begin{equation*}\label{capacityineq}
\mu(\Omega) \le C\, \textup{Cap}(\Omega,\textup{M}_{1}\mu).
\end{equation*}
\end{corollary}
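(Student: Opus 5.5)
Corollary \ref{capacity} will follow directly from Theorem \ref{Gradient} applied to admissible test functions in the definition of $\textup{Cap}(\Omega,\textup{M}_1\mu)$. If $\textup{Cap}(\Omega,\textup{M}_1\mu)=\infty$ there is nothing to prove, so we assume it is finite. We then fix an arbitrary $u\in \Con_c^\infty(\R^n)^+$ with $u\ge 1$ on a neighborhood $U$ of $\Omega$, which in particular means $\Omega\subseteq U$ and $u\geq 1$ on $U$. The strategy is to bound $\mu(\Omega)$ by $\int|u|\,\mathrm d\mu$, estimate that integral by \eqref{newMZthm}, and then take the infimum over all such $u$.

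\textbf{Step 1 (trivial lower bound).} Since $u\ge 0$ everywhere and $u\ge 1$ on $U\supseteq\Omega$, we have $\mathbf 1_\Omega\le u$ pointwise. Integrating against $\mu$ gives
\[
\mu(\Omega)\le \int_{\R^n} u\,\mathrm d\mu .
\]
Measurability of $\Omega$ is not an issue here: if $\Omega$ is not Borel, we read $\mu(\Omega)$ as outer measure, and the same bound holds because $\Omega\subseteq U$ with $U$ open and $\mu(U)\le\int u\,\mathrm d\mu$.

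\textbf{Step 2 (apply Theorem \ref{Gradient}).} Since $u\in\Con_c^\infty(\R^n)\subseteq \Lip_c(\R^n)$, inequality \eqref{newMZthm} gives
\[
\int_{\R^n} u\,\mathrm d\mu\le C_n\int_{\R^n}|\nabla u|\,\textup{M}_1\mu\,\mathrm dx=C_n\|\nabla u\|_{\Lup^1(\textup{M}_1\mu)} .
\]
The constant $C_n$ depends only on the dimension. In particular, no hypothesis on the finiteness of $\textup{M}_1\mu$ is needed: if $\textup{M}_1\mu\equiv\infty$, the right-hand side is infinite unless $\nabla u\equiv 0$. That case cannot occur, because a compactly supported $u$ with $\nabla u\equiv 0$ vanishes identically, contradicting $u\geq 1$ on $U$ (when $\Omega\ne\emptyset$). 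If $\Omega=\emptyset$, the inequality is trivial.

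\textbf{Step 3 (infimum).} Combining Steps 1 and 2 gives $\mu(\Omega)\le C_n\|\nabla u\|_{\Lup^1(\textup{M}_1\mu)}$ for every admissible $u$. Taking the infimum over all admissible $u$ yields $\mu(\Omega)\le C\,\textup{Cap}(\Omega,\textup{M}_1\mu)$ with $C=C_n$.

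There is no real obstacle: all of the difficulty is contained in Theorem \ref{Gradient}. The only points that need care are the admissibility of smooth functions as Lipschitz test functions and the degenerate cases $\textup{M}_1\mu\equiv\infty$ and $\Omega=\emptyset$, both handled above.
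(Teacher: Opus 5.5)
Your argument is correct and is exactly what the paper intends: it records the corollary as an immediate consequence of \eqref{newMZthm} and the definition of capacity, namely $\mu(\Omega)\le\int u\,\mathrm d\mu\le C\|\nabla u\|_{\Lup^1(\textup{M}_1\mu)}$ for every admissible $u$, followed by taking the infimum. Your remarks on non-Borel $\Omega$ and on the degenerate cases are harmless but not needed, since \eqref{newMZthm} holds for every locally finite Borel measure $\mu$ with a constant depending only on $n$.
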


\subsection{Endpoint Estimates with Riesz transforms and Maximal Functions}

Returning to the Riesz potential $\textup{I}_1$, its boundedness implies the Sobolev inequality; however, the converse is not true. Instead, one obtains a related estimate involving the Riesz transforms. The vector-valued Riesz transform $\mathbf{R}f$ is defined by
\[
\mathbf{R}f(x)
= -\nabla \textup{I}_1 f(x)
= \,\textup{p.v.}\!\int_{\R^n} \frac{c_ny}{|y|^{n+1}}\, f(x-y)\,\mathrm{d}y,
\]
where $\mathrm{p.v.}$ denotes the principal value.

Again, at the endpoint, the Riesz potential is not bounded, and the weak-type estimate \eqref{falseendptIal} fails. Instead, one obtains an alternative endpoint inequality involving the Riesz transforms on the right-hand side, which follows from a limiting argument applied to \eqref{newMZthm} with $u=\textup{I}_1 f$.

\begin{theorem}\label{RieszRieszThm}
Suppose $\mu$ is a locally finite Borel measure. Then
\begin{equation}\label{Rieszbound}
\|\textup{I}_1 f\|_{\Lup^1(\mu)}
\le C\,\|\mathbf{R}f\|_{\Lup^1(\textup{M}_1\mu)},
\qquad f \in \Con_c^\infty(\R^n).
\end{equation}
\end{theorem}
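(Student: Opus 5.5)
The plan is to apply Theorem \ref{Gradient} to a truncated version of $u=\textup{I}_1 f$ and then remove the truncation. For $f\in \Con_c^\infty(\R^n)$, the function $\textup{I}_1 f$ is smooth, with $\nabla \textup{I}_1 f = -\mathbf{R}f$. It is not compactly supported, however: it decays only like $|x|^{1-n}$, and $\mathbf{R}f$ decays like $|x|^{-n}$. So \eqref{newMZthm} cannot be applied to it directly.

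First I fix a cutoff $\varphi\in \Con_c^\infty(\R^n)$ with $0\le\varphi\le1$, $\varphi=1$ on $B_1(0)$, $\supp\varphi\subseteq B_2(0)$, and set $\varphi_R(x)=\varphi(x/R)$. Then $u_R=\varphi_R\,\textup{I}_1 f\in \Lip_c(\R^n)$, and Theorem \ref{Gradient} gives
\[
\int_{\R^n}\varphi_R|\textup{I}_1 f|\,\mathrm d\mu \le C\int_{\R^n}\varphi_R|\mathbf{R}f|\,\textup{M}_1\mu\,\mathrm dx + \frac{C}{R}\int_{R\le|x|\le 2R}|\textup{I}_1 f|\,\textup{M}_1\mu\,\mathrm dx .
\]
The first term on the right is at most $C\|\mathbf{R}f\|_{\Lup^1(\textup{M}_1\mu)}$. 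On the left, monotone convergence as $R\to\infty$ (using $\varphi_R\uparrow 1$ if $\varphi$ is radially decreasing) yields $\|\textup{I}_1 f\|_{\Lup^1(\mu)}$. We may assume $\textup{M}_1\mu\not\equiv\infty$, since otherwise the right-hand side of \eqref{Rieszbound} is infinite unless $\mathbf Rf=0$ a.e., which forces $f=0$.

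The main obstacle is showing that the error term
\[
E_R=R^{-1}\int_{R\le|x|\le2R}|\textup{I}_1 f|\,\textup{M}_1\mu\,\mathrm dx
\]
tends to zero, or at least is controlled by $\|\mathbf{R}f\|_{\Lup^1(\textup{M}_1\mu)}$. Since $|\textup{I}_1 f(x)|\lesssim_f R^{1-n}$ on the annulus, we get $E_R\lesssim R^{-n}\int_{B_{2R}}\textup{M}_1\mu\,\mathrm dx$. Then the $\A_1$ property of $\textup{M}_1\mu$ (Theorem \ref{MalA1}) is not directly enough. I would instead try to compare with the right-hand side. If $\int f\ne 0$, then $|\mathbf{R}f(x)|\gtrsim |\int f|\,|x|^{-n}$ for large $|x|$, so
\[
\|\mathbf Rf\|_{\Lup^1(\textup{M}_1\mu)}\gtrsim \int_{|x|>R_0}\frac{\textup{M}_1\mu(x)}{|x|^n}\,\mathrm dx .
\]
If this is finite, then its dyadic pieces $R^{-n}\int_{R\le|x|\le 2R}\textup{M}_1\mu\,\mathrm dx$ tend to zero, and hence $E_R\to0$. (The ball $B_{2R}$ can be replaced by the annulus because $\nabla\varphi_R$ is supported there.) If the integral is infinite, the inequality is trivial.

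If $\int f=0$, then $\textup{I}_1 f$ decays like $|x|^{-n}$, and $\mathbf Rf$ decays like $|x|^{-n-1}$ with the leading dipole term. In that case $E_R\lesssim R^{-n-1}\int_{R\le |x|\le 2R}\textup{M}_1\mu$. Using $\textup{M}_1\mu(x)\gtrsim \mu(B_1(0))\,|x|^{1-n}$-type growth bounds and the polynomial growth of $\A_1$ averages ($\fint_{B_R}\textup{M}_1\mu\lesssim \textup{M}_1\mu(0)$-type estimates, i.e. $\fint_{B_R}w\le C\inf_{B_R} w\le C\,\textup{M}_1\mu(x_0)$ for $R\ge |x_0|$), I expect $R^{-n}\int_{B_{2R}}\textup{M}_1\mu$ to be bounded. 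This would make $E_R=O(R^{-1})\to0$ in this case.

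This bounded-average estimate via $\A_1$ is what I would try first in both cases. In fact, if it holds it settles the $\int f\ne0$ case as well, because there $E_R\lesssim R^{-n}\int_{B_{2R}}\textup{M}_1\mu$ is bounded. So I would aim to show that the averages of $\textup{M}_1\mu$ over large balls stay bounded, perhaps after choosing the cutoff more cleverly, for instance a logarithmic cutoff $\varphi_R$ with $|\nabla\varphi_R|\lesssim (|x|\log R)^{-1}$ on $R\le|x|\le R^2$. That choice gains a factor $(\log R)^{-1}$ and forces the error to zero whenever the dyadic averages are bounded.
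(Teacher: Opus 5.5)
Your argument is correct, and the actual proof is the case split in your middle paragraphs. The final paragraph should be discarded (see below). You share the first step with the paper: truncate $u_R=\varphi_R\,\textup I_1 f$, apply Theorem~\ref{Gradient}, and show that the cutoff error $E_R$ is harmless. The routes differ in how $E_R$ is handled.

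The paper treats it uniformly. It bounds the error by H\"older's inequality in Lorentz spaces, $k^{-1}w(B_{2k}(0))^{1/n}\|\mathbf 1_{B_{2k}\setminus B_k}\textup I_1 f\|_{\Lup^{n',\infty}(w)}$ with $w=\textup M_1\mu$. It then uses $w(B_{2k}(0))\lesssim k^n\,\textup Mw(0)$ and \eqref{Minkow} to get $\textup I_1 f\in\Lup^{n',\infty}(w)$, and concludes that the weak norms over the annuli tend to zero. You instead use the explicit decay of $\textup I_1 f$ and split on $\int f$.

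When $\int f=0$ the two arguments are essentially the same: decay $|x|^{-n}$ plus bounded large-ball averages of the $\A_1$ weight gives $E_R=O(R^{-1})$. You center at a point $x_0$ with $\textup M_1\mu(x_0)<\infty$, which is the right choice, since $\textup Mw(0)$ can be infinite, e.g.\ for $\mu=\delta_0$.

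When $\int f\neq 0$ your dichotomy is more than cosmetic. Membership in $\Lup^{n',\infty}(w)$ does not by itself force the weak norms on annuli to vanish. For example, they do not vanish when $\mathrm d\mu=|x|^{-1}\dx$ (so $\textup M_1\mu\approx 1$) and $\int f\neq 0$. In that example both sides of \eqref{Rieszbound} are infinite. What closes this case is your observation that $|\mathbf Rf(x)|\gtrsim|\int f|\,|x|^{-n}$ at infinity. It gives either $\|\mathbf Rf\|_{\Lup^1(\textup M_1\mu)}=\infty$, or $\int_{|x|>R_0}\textup M_1\mu(x)\,|x|^{-n}\dx<\infty$ and then $E_R\to 0$ as the tail of a convergent integral. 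Your route is thus more elementary: it needs no weak-type bound for $\textup I_1$, no Lorentz H\"older, and no $\A_1$ at all when $\int f\neq 0$. It is also more complete than the paper's step "the final term tends to zero."

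The last paragraph, however, contains two false claims.

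First, a merely bounded $E_R$ does not settle the case $\int f\neq 0$. It only yields $\|\textup I_1f\|_{\Lup^1(\mu)}\le C\|\mathbf Rf\|_{\Lup^1(\textup M_1\mu)}+C_{f,\mu}$, with an uncontrolled additive constant.

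Second, the logarithmic cutoff does not force the error to zero. The factor $(\log R)^{-1}$ is cancelled by the roughly $\log R$ dyadic shells between $R$ and $R^2$, each contributing about $\rho^{-n}\int_{\rho\le|x|\le 2\rho}\textup M_1\mu\dx$. In fact no cutoff helps in the example above. If $\varphi=1$ on $B_R(0)$ with compact support, integrating along rays gives $\int|\nabla\varphi|\,|x|^{1-n}\dx\ge\omega_{n-1}$, so the error term stays $\gtrsim|\int f|$. There the only working mechanism is the one you already had: the right-hand side is infinite.
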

Inequality \eqref{Rieszbound} is formally equivalent to the Sobolev inequality \eqref{newMZthm}. Indeed, taking $f = (-\Delta)^{\frac12} u$ (see equation \eqref{fracderiv} for the definition of the fractional derivative) and using the identities
\[
\textup{I}_1 (-\Delta)^{\frac12} u = u
\ \ \text{and} \ \
\mathbf{R} (-\Delta)^{\frac12} u
= -\nabla \textup{I}_1 (-\Delta)^{\frac12} u
= \nabla u,
\]
yields the desired inequality.  

Notice that if we take $u=\textup{M}_1f$ in \eqref{newMZthm}, and use a result of Kinnunen and Saksman \cite{KinSak} concerning the regularity of the fractional maximal function 
$$|\nabla \textup{M}_1f|\leq c\,\textup{M}f$$
then we see that 
\begin{equation}\label{strongMM} 
\|\textup{M}_1f\|_{\Lup^1(\mu)}\leq C\|\nabla(\textup{M}_1f)\|_{\Lup^1(\textup{M}_1\mu)}\leq C\|\textup{M}f\|_{\Lup^1(\textup{M}_1\mu)}.\end{equation}
Comparing \eqref{strongMM} with \eqref{Rieszbound} we see a completely analogous inequality for maximal functions and integral operators.  In general \eqref{strongMM} extends to the full range $0\leq \al<n$ as the following theorem shows.

\begin{theorem} \label{MalMendpt}
Suppose $0 \le \alpha < n$ and $\mu$ is a locally finite Borel measure. Then
\begin{equation}\label{stMalendptgeneralmeas}
\|\textup{M}_\alpha f\|_{\Lup^{1}(\mu)} \le C\,\|\textup{M}f\|_{\Lup^1(\textup{M}_\alpha \mu)}.
\end{equation}
\end{theorem}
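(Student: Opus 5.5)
The plan is a dyadic sparse domination of $\textup{M}_\alpha f$, with the sparse cubes' disjoint portions used to bring in $\textup{M}f$ on the right-hand side.

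\emph{Reductions.} By monotone convergence I may assume $f\ge 0$ is bounded with compact support. By the one-third trick there are finitely many (at most $3^n$) shifted dyadic lattices $\mathscr D^t$ such that every ball $B_r(x)$ lies in a cube $Q\in\mathscr D^t$, for some $t$, with $\ell(Q)\le c_n r$. Hence
\[
\textup{M}_\alpha f\le C_n\sum_t \textup{M}^{\mathscr D^t}_\alpha f,
\qquad
\textup{M}^{\mathscr D}_\alpha f(x)=\sup_{x\in Q\in\mathscr D}\ell(Q)^\alpha\fint_Q f .
\]
It therefore suffices to prove \eqref{stMalendptgeneralmeas} with a single dyadic lattice $\mathscr D$ on the left. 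Conversely, each $Q\in\mathscr D$ is contained in a ball of radius comparable to $\ell(Q)$ centred at any point of $Q$. This gives the geometric input
\[
\ell(Q)^{\alpha-n}\mu(Q)\le C_n\,\textup{M}_\alpha\mu(y)\qquad\text{for all } y\in Q.
\]

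\emph{Calder\'on--Zygmund cubes for $\textup{M}^{\mathscr D}_\alpha$.} Fix $a=2^{n+1}$ and set $\Omega_k=\{\textup{M}^{\mathscr D}_\alpha f>a^k\}$. Since $f\in \Lup^1$ and $\alpha<n$, we have $\ell(Q)^\alpha\fint_Q f\le \ell(Q)^{\alpha-n}\|f\|_1\to0$ as $\ell(Q)\to\infty$. So $\Omega_k$ is the disjoint union of maximal cubes $Q^k_j$ with $\ell(Q_j^k)^\alpha\fint_{Q_j^k}f>a^k$. By maximality the parent fails this inequality, so $\ell(Q^k_j)^\alpha\fint_{Q^k_j} f\le 2^{n-\alpha}a^k$. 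The key step is sparseness. The cubes $P$ of $\Omega_{k+1}$ contained in $Q=Q^k_j$ satisfy $\ell(P)\le\ell(Q)$ and $\alpha\ge 0$, hence
\[
|Q\cap\Omega_{k+1}|=\sum_{P\subseteq Q}|P|< a^{-k-1}\sum_{P\subseteq Q}\ell(P)^\alpha\!\int_P f\le a^{-k-1}\ell(Q)^\alpha\!\int_Q f\le 2^{n-\alpha}a^{-1}|Q|\le \tfrac12|Q|.
\]
Thus the sets $E^k_j=Q^k_j\setminus\Omega_{k+1}$ are pairwise disjoint and satisfy $|E^k_j|\ge\frac12|Q^k_j|$.

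\emph{Summation.} Since $\textup{M}^{\mathscr D}_\alpha f\le a^{k+1}$ on $\Omega_k\setminus\Omega_{k+1}$,
\[
\int \textup{M}^{\mathscr D}_\alpha f\,\mathrm d\mu\le a\sum_{k,j}a^k\mu(Q^k_j)\le a\sum_{k,j}\ell(Q^k_j)^\alpha\Big(\fint_{Q^k_j}f\Big)\mu(Q^k_j).
\]
Each term equals $\big(\fint_Q f\big)\,|Q|\,\ell(Q)^{\alpha-n}\mu(Q)$. For every $y\in E_Q$ we have $\fint_Q f\le \textup{M}f(y)$ and $\ell(Q)^{\alpha-n}\mu(Q)\le C\textup{M}_\alpha\mu(y)$, and also $|Q|\le 2|E_Q|$. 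Therefore each term is at most $2C\int_{E_Q}\textup{M}f\,\textup{M}_\alpha\mu\,\mathrm dx$. The disjointness of the $E_Q$ then bounds the whole sum by $C\|\textup{M}f\|_{\Lup^1(\textup{M}_\alpha\mu)}$.

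The main obstacle is the sparseness estimate, which needs both $\alpha\ge0$ (so that $\ell(P)^\alpha\le\ell(Q)^\alpha$) and the choice $a=2^{n+1}$. A minor point is the existence of the maximal cubes, which uses $\alpha<n$ and the reduction to integrable $f$. When $\textup{M}_\alpha\mu\equiv\infty$ the inequality is trivial (unless $f=0$), so no use of Theorem~\ref{Malfinite} is needed.
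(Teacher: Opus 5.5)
Your proof is correct and follows the paper's argument. You reduce to a single dyadic lattice by the one-third trick, take a sparse family $\{Q\}$ with pairwise disjoint $E_Q\subseteq Q$ and $|E_Q|\ge\frac12|Q|$, write each term as $\bigl(\fint_Q f\bigr)\,|Q|\,\ell(Q)^{\alpha-n}\mu(Q)$, and bound it by $C\int_{E_Q}\textup{M}f\,\textup{M}_\alpha\mu\,\mathrm{d}x$. The only difference is that you build the sparse family yourself from Calder\'on--Zygmund cubes at levels $a^k$ and use the layer-cake bound with $\mu(Q)$, whereas the paper cites the known sparse domination of $\textup{M}_\alpha^{\mathscr D}$ and then discards the gain via $\mu(E_Q)\le\mu(Q)$.
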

\begin{remark}\label{Malrem}

When $\alpha=0$ and $\mathrm{d}\mu = w\,\mathrm{d}x$, inequality~\eqref{stMalendptgeneralmeas} is a trivial consequence of the Lebesgue differentiation theorem and in particular the fact that
$$w\leq \textup Mw, \qquad \mathcal L \text{-a.e.}$$ 
However, for a general measure $\mu$, even the case $\alpha=0$ in~\eqref{stMalendptgeneralmeas} appears to be new.

We also note that Theorem~\ref{MalMendpt} is \emph{false} if the maximal operator $\textup{M}$ is removed from the right-hand side. Indeed, let $f\in \Con_c^\infty(\R^n)$ be any nontrivial function with $\supp f\subseteq B_R(0)$. Then
\[
\textup{M}_\alpha f(x)\geq \frac{C}{|x|^{n-\alpha}}, \qquad |x|\geq R.
\]
Moreover, if we take $\mathrm{d}\mu = |x|^{-\alpha}\,\mathrm{d}x$, then $\textup{M}_\alpha \mu(x)\leq C$ for all $x$, and thus
$\|f\|_{\Lup^1(\textup{M}_\alpha\mu)}<\infty$. On the other hand,
\[
\|\textup{M}_\alpha f\|_{\Lup^1(\mu)}
\geq
C\int_{|x|\geq R}\frac{1}{|x|^n}\,\mathrm{d}x
=
\infty.
\]

If $\textup{M}_\al\mu\approx 1$, then inequality \eqref{stMalendptgeneralmeas} is trivial for any non-zero function since $\|\textup{M}f\|_{\Lup^1(\R^n)}=\infty$.  However, there
exist weights for which this estimate is both meaningful and nontrivial. Given a weight $w$, Stein
\cite{St} introduced the space $\textup{M}\Lup^1(w)$, consisting of functions $f$ such that 
\[
\int_{\R^n} \textup{M}f(x)\, w(x)\dx < \infty.
\]
He showed that if $w(\R^n)=\infty$ and
\[
\int_{\R^n} \frac{w(x)}{(1+|x|)^n}\dx < \infty,
\]
then ML$^1(w)\neq\{0\}$, and $\Con_c^\infty(\R^n)$ is dense in
ML$^1(w)$. Consequently, inequality \eqref{stMalendptgeneralmeas} yields the
mapping property
\[
\textup{M}_\alpha : \textup{M}\Lup^1(\textup{M}_\alpha \mu) \longrightarrow \Lup^1(\mu).
\]
\end{remark}

Returning to the Riesz potential, inequality \eqref{Rieszbound} is closely related to Hardy space estimates for the
Riesz potential. Given an $\textup{A}_1$ weight $w$, the weighted Hardy space $\Har^1(w)$
is defined as the collection of functions $f \in \Lup^1(w)$ such that
$\mathbf R f \in \Lup^1(w)$, equipped with the norm
\[
\|f\|_{\Har^1(w)} := \|f\|_{\Lup^1(w)} + \|\mathbf R f\|_{\Lup^1(w)}.
\]
In particular, inequality \eqref{Rieszbound} implies that
\[
\textup{I}_1 : \Har^1(\textup{M}_1\mu) \longrightarrow \Lup^1(\mu).
\]

We establish the following result for general $0<\alpha<n$, which does not
appear to be present in the existing literature.

\begin{theorem}\label{Hardy}
Suppose $0<\alpha<n$ and $\mu$ is a Borel measure such that $\textup{M}_\alpha \mu < \infty$
almost everywhere with respect to Lebesgue measure. Then
\[
\|\textup{I}_\alpha f\|_{\Lup^1(\mu)} \le C\,\|f\|_{\Har^1(\textup{M}_\alpha \mu)}.
\]
\end{theorem}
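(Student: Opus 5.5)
The plan is to reduce to a Meyers--Ziemer type estimate at order $\alpha$, in the same way that Theorem \ref{RieszRieszThm} comes from \eqref{newMZthm}. The natural intermediate inequality is
\begin{equation*}
\int_{\R^n} |\textup{I}_\alpha f|\,\mathrm d\mu \le C\int_{\R^n} \bigl(|f|+|\mathbf R f|\bigr)\,\textup{M}_\alpha\mu \dx .
\end{equation*}
I will work with $f$ smooth and compactly supported, or at least with a dense class, and extend at the end.

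The first step is duality with the weight. We may assume $\mu$ has compact support; the general case follows by monotone convergence, since $\textup{M}_\alpha(\mu\mathbf 1_{B_R})\le \textup{M}_\alpha\mu$. Testing against $g$ with $|g|\le 1$, Fubini gives
\[
\int \textup{I}_\alpha f\, g\,\mathrm d\mu=\int f\,\textup{I}_\alpha(g\mu)\dx .
\]
The task is therefore to show that $\textup{I}_\alpha(g\mu)$ pairs boundedly with $\Har^1(w)$, where $w=\textup{M}_\alpha\mu$. By weighted $\Har^1$--BMO duality for $w\in\A_1$ (Theorem \ref{MalA1} guarantees $w\in\A_1$, and $w$ is finite a.e. by hypothesis), it suffices to prove that $\textup{I}_\alpha(g\mu)$ lies in the weighted space $\textup{BMO}(w)$. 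Concretely, for every cube $Q$ we need a constant $c_Q$ with
\[
\int_Q |\textup{I}_\alpha(g\mu)-c_Q|\dx\le C\,w(Q),
\]
or the corresponding statement $\le C\,\mathcal L(Q)\inf_Q w$.

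The second step verifies this estimate by the standard local/far splitting. Write $\mu=\mu\mathbf 1_{2Q}+\mu\mathbf 1_{(2Q)^c}$. For the local part, Fubini gives
\[
\int_Q \textup{I}_\alpha(\mu\mathbf 1_{2Q})\dx\lesssim \int_{2Q}\int_Q |x-y|^{\alpha-n}\dx\,\mathrm d\mu(y)\lesssim \ell(Q)^\alpha\mu(2Q).
\]
The right-hand side is at most $\mathcal L(Q)\inf_Q \textup{M}_\alpha\mu$, because every point of $Q$ sees $2Q$ inside a ball of radius comparable to $\ell(Q)$. For the far part, take $c_Q=\textup{I}_\alpha(g\mu\mathbf 1_{(2Q)^c})(x_Q)$. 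The kernel smoothness gives
\[
\bigl||x-y|^{\alpha-n}-|x_Q-y|^{\alpha-n}\bigr|\lesssim \ell(Q)|x_Q-y|^{\alpha-n-1}.
\]
Summing over dyadic annuli $2^{k+1}Q\setminus 2^kQ$ yields
\[
\sum_k 2^{-k}\,(2^k\ell(Q))^{\alpha-n}\mu(2^{k+1}Q)\lesssim \inf_Q\textup{M}_\alpha\mu,
\]
and multiplying by $\mathcal L(Q)$ gives the required bound.

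The main obstacle is the duality step. Weighted $\Har^1(w)$--$\textup{BMO}$ duality for general $\A_1$ weights, with the Riesz-transform definition of $\Har^1(w)$ used in the paper, needs care. If it is problematic, I will instead use the weighted atomic decomposition of $\Har^1(w)$ for $w\in\A_1$: write $f=\sum\lambda_j a_j$ with $\sum|\lambda_j|\lesssim\|f\|_{\Har^1(w)}$, where each atom $a_j$ is supported in a cube $Q_j$, has mean zero, and satisfies $\|a_j\|_\infty\le w(Q_j)^{-1}$. It then suffices to show $\|\textup{I}_\alpha a\|_{\Lup^1(\mu)}\le C$ uniformly over atoms. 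Near $Q$, integrating $\textup{I}_\alpha|a|$ against $\mu$ on $2Q$ gives at most $\|a\|_\infty\int_Q \textup{I}_\alpha(\mu\mathbf 1_{4Q})\lesssim w(Q)^{-1}\mathcal L(Q)\inf_Q\textup{M}_\alpha\mu\le 1$. Away from $Q$, the cancellation of $a$ together with the same annular sum gives $\lesssim \|a\|_{\Lup^1}\inf_Q\textup{M}_\alpha\mu\le 1$. This route is more elementary and is the one I would write.
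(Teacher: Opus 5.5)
Your fallback route is the paper's proof. It reduces to $\A_1$-weighted atoms with $\|a\|_\infty\le w(Q)^{-1}$, where $w=\textup{M}_\alpha\mu$. It bounds the near part by $\|a\|_\infty\,\ell(Q)^\alpha\mu(cQ)\lesssim\|a\|_\infty w(Q)\le 1$, and the far part by cancellation plus kernel smoothness, giving $\lesssim\int|a|\,\textup{M}_\alpha\mu\le 1$.

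The differences are cosmetic: the paper uses balls $B,3B$ and the pointwise bound $|\textup{I}_\alpha a|\lesssim R^\alpha/w(B)$ on $3B$ in place of your Fubini step, and it does not use the $\textup{BMO}(w)$-duality alternative.
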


When $\alpha=1$, the endpoint estimate \eqref{Rieszbound} is strictly stronger than the Hardy space bound discussed above. Interestingly, for $\alpha \neq 1$, the sharper inequality
\begin{equation}\label{conject}
\|\textup{I}_\alpha f\|_{\Lup^1(\mu)} \le C\,\|\mathbf R f\|_{\Lup^1(\textup{M}_\alpha \mu)}
\end{equation}
may fail to hold. Indeed, Spector \cite{Sp} showed that for $0<\alpha<1$, inequality \eqref{conject}—which is formally equivalent to a fractional Sobolev inequality—is false.

Table \ref{table} shows a summary of the known and unknown endpoint bounds for $\textup{M}_\al$ and $\textup{I}_\al$.

\begin{table}[h!]
\centering
\resizebox{\textwidth}{!}{%
\begin{tabular}{l c c}
\hline
\textbf{Endpoint Estimate} & \textbf{Inequality} & \textbf{Status} \\
\hline
Weak $\textup{M}_\alpha$ bound
  & $\|\textup{M}_\alpha f\|_{\Lup^{1,\infty}(\mu)} \le C\|f\|_{\Lup^1(\textup{M}_\alpha \mu)}$ 
  & {True for $0\le \alpha < n$ (\cite{St,Saw81})} \\[1.5mm]

Strong $\textup{M}_\alpha$ bound
  & $\|\textup{M}_\alpha f\|_{\Lup^{1}(\mu)} \le C\|f\|_{\Lup^1(\textup{M}_\alpha \mu)}$ 
  & {False (Remark \ref{Malrem})} \\[1.5mm]

Strong $\textup{M}_\alpha$-M bound
  & $\|\textup{M}_\alpha f\|_{\Lup^{1}(\mu)} \le C\|\textup{M}f\|_{\Lup^1(\textup{M}_\alpha \mu)}$ 
  & {True for $0\le \alpha < n$ (Theorem \ref{MalMendpt})} \\[1.5mm]

Weak $\textup{I}_\alpha$ bound
  & $\|\textup{I}_\alpha f\|_{\Lup^{1,\infty}(\mu)} \le C\|f\|_{\Lup^1(\textup{M}_\alpha \mu)}$ 
  & {False (\cite{CPSS})} \\[1.5mm]

Strong $\textup{I}_\alpha$-$\mathbf R$ bound
  & $\|\textup{I}_\alpha f\|_{\Lup^{1}(\mu)} \le C\|\mathbf R f\|_{\Lup^1(\textup{M}_\alpha \mu)}$ 
  & {True for $\alpha = 1$ (Theorem \ref{RieszRieszThm}), False for $0<\alpha< 1$ (\cite{Sp}}) \\[1.5mm]

Hardy $\textup{I}_\alpha$ bound
  & $\|\textup{I}_\alpha f\|_{\Lup^{1}(\mu)} \le C\|f\|_{\textup{H}^1(\textup{M}_\alpha \mu)}$ 
  & {True for $0<\alpha < n$ (Theorem \ref{Hardy})} \\
\hline
\end{tabular}%
}
\caption{Summary of endpoint estimates and their validity.}
\label{table}
\end{table}

\subsection{Further Consequences and Results}

The strong endpoint inequality \eqref{newMZthm} is also
closely connected to the classical Gagliardo–Nirenberg–Sobolev inequality.  First, we note that there is a natural extension of inequality \eqref{GNS} to Lorentz spaces due to 
Alvino \cite{Al}.  He showed that the GNS inequality admits a refinement in the Lorentz scale
(see Section~\ref{prelim} for definitions), namely,
\begin{equation}\label{alvinoineq}
\|u\|_{\Lup^{\frac{n}{n-1},1}(\R^n)} \le C\,\|\nabla u\|_{\Lup^1(\R^n)}.
\end{equation}
More generally, our endpoint estimates yield the following general Lorentz scale results.

\begin{corollary}\label{LorenttGradientq}
Let $\mu$ be a locally finite Borel measure. For $1 \le q \le \frac{n}{n-1}$ and
$\alpha = n - q(n-1)$, we have
\begin{equation}\label{Lorentzq}
\|u\|_{\Lup^{q,1}(\mu)} \le C \int_{\R^n} |\nabla u|\, (\textup{M}_\alpha \mu)^{\frac1q}\dx.
\end{equation}
\end{corollary}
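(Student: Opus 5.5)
The plan is to reduce \eqref{Lorentzq} to the level sets of $|u|$: use the coarea formula, and then apply Corollary \ref{isoperimend} (equivalently Theorem \ref{Gradient}) with a rescaled measure. By the layer cake representation of the Lorentz quasinorm,
\[
\|u\|_{\Lup^{q,1}(\mu)} \approx \int_0^\infty \mu(\{|u|>t\})^{\frac1q}\,\mathrm dt .
\]
The coarea formula gives
\[
\int_{\R^n}|\nabla u|\,(\textup M_\alpha\mu)^{\frac1q}\dx=\int_0^\infty \int_{\partial^*\{|u|>t\}} (\textup M_\alpha\mu)^{\frac1q}\,\mathrm d\mathcal H^{n-1}\,\mathrm dt .
\]
It therefore suffices to prove the isoperimetric-type inequality
\begin{equation*}
\mu(E)^{\frac1q}\le C\int_{\partial^* E}(\textup M_\alpha\mu)^{\frac1q}\,\mathrm d\mathcal H^{n-1}
\end{equation*}
for the bounded open sets $E=\{|u|>t\}$, $u\in\Lip_c$. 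We may replace $u$ by $|u|$, and for a.e. $t$ these sets have finite perimeter. Alternatively, one can prove the inequality for smooth approximating sets and pass to the limit using lower semicontinuity of $\textup M_\alpha\mu$.

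To prove the set inequality, fix $E$ and define the measure $\nu=\mu(E)^{\frac1q-1}\,\mu\llcorner E$. Then $\nu(E)=\mu(E)^{1/q}$, and Corollary \ref{isoperimend} applied to $\nu$ gives $\mu(E)^{1/q}\le C\int_{\partial^*E}\textup M_1\nu\,\mathrm d\mathcal H^{n-1}$. The key pointwise estimate is
\[
\textup M_1\nu(x)\le C\,(\textup M_\alpha\mu(x))^{\frac1q}.
\]
Here $\alpha=n-q(n-1)$, so that $\frac{n-\alpha}{q}=n-1$. For a ball $B_r(x)$ we use two bounds. The first is $\mu(B_r(x)\cap E)\le \mu(E)$. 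The second is $\mu(B_r(x)\cap E)\le r^{n-\alpha}\,\textup M_\alpha\mu(x)$, up to the normalization $v_n$. Interpolating,
\[
r^{1-n}\,\mu(E)^{\frac1q-1}\,\mu(B_r\cap E)\le r^{1-n}\,\mu(E)^{\frac1q-1}\,\mu(B_r\cap E)^{1-\frac1q}\,\mu(B_r\cap E)^{\frac1q}\le r^{1-n}\bigl(r^{n-\alpha}\textup M_\alpha\mu(x)\bigr)^{\frac1q}.
\]
This equals $(\textup M_\alpha\mu(x))^{1/q}$, because $1-n+\frac{n-\alpha}{q}=0$. We used $q\ge1$, so that $\mu(E)^{1/q-1}\le\mu(B_r\cap E)^{1/q-1}$.

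The constant in Corollary \ref{isoperimend} is claimed to depend only on $n$ and the $\A_1$ constant of $\textup M_1\nu$, which is dimensional by Theorem \ref{MalA1}. Better, the constant should come directly from Theorem \ref{Gradient}, which has dimensional $C$, via the coarea equivalence. So $C$ is independent of $\mu$ and $E$. Integrating the set inequality in $t$ then finishes the proof.

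The main obstacle is justifying the set inequality for level sets. That means applying Theorem \ref{Gradient} to approximations $\phi_k\to\mathbf 1_E$, for instance $\phi_k=\min(1,k(|u|-t)^+)$, which are Lipschitz with compact support. It also means identifying $\lim\int|\nabla\phi_k|\,\textup M_1\nu$ with the perimeter integral for a.e. $t$. If that is delicate, I would instead avoid sets entirely. The route is to apply Theorem \ref{Gradient} to $\phi_{t,h}=\min(1,(|u|-t)^+/h)$ with the measure $\nu_t$ built from $E_t$. Since $\textup M_1\nu_t\le C(\textup M_\alpha\mu)^{1/q}$, this gives $\mu(E_{t+h})\,\mu(E_t)^{1/q-1}\le \frac Ch\int_{\{t<|u|\le t+h\}}|\nabla u|(\textup M_\alpha\mu)^{1/q}\dx$. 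Summing over a dyadic or arithmetic partition in $t$ and letting $h\to0$ then yields \eqref{Lorentzq} directly, with $\mu(E_{t+h})^{1/q}$ controlled via monotonicity as a Riemann sum.
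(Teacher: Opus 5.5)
Your proof is correct. It rests on the same key estimate as the paper's but assembles it differently.

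\textbf{The paper's route.} For $q>1$ the paper argues by Lorentz duality. It pairs $|u|$ against $g\ge0$ in the unit ball of $\Lup^{q',\infty}(\mu)$ and applies Theorem \ref{Gradient} once, to $u$ itself, with the measure $g\,\mathrm d\mu$. It then bounds $\textup M_1(g\,\mathrm d\mu)\le c\,(\textup M_\alpha\mu)^{1/q}$ through the Lorentz H\"older inequality $\int_B g\,\mathrm d\mu\lesssim\|g\|_{\Lup^{q',\infty}(\mu)}\|\mathbf 1_B\|_{\Lup^{q,1}(\mu)}$.

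\textbf{Your route.} Your measure $\nu=\mu(E)^{\frac1q-1}\mathbf 1_E\,\mathrm d\mu$ is exactly $g\,\mathrm d\mu$ for the extremal choice $g=\mu(E)^{-1/q'}\mathbf 1_E$. So you only need the pointwise bound for normalized indicators, which you verify by hand without any Lorentz duality. You then recover general $u$ through the layer-cake formula and the coarea formula. In effect you derive item (4) of Theorem \ref{characterizethm} from Corollary \ref{isoperimend} and then run the paper's (4)$\Rightarrow$(1).

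\textbf{What each buys.} The paper's argument is shorter and never touches level sets, perimeters, or the finite-perimeter-for-a.e.-$t$ bookkeeping. Yours has three advantages:
\begin{itemize}
\item it avoids the $\Lup^{q,1}$--$\Lup^{q',\infty}$ duality;
\item it treats $q=1$ and $q>1$ uniformly;
\item it produces the isoperimetric inequality along the way.
\end{itemize}
The price is reliance on the weighted coarea formula and on Corollary \ref{isoperimend}. That corollary's constant is dimensional only because the $\A_1$ constant of $\textup M_1\nu$ is, by Theorem \ref{MalA1}.

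\textbf{A fix to your fallback argument.} With $\nu_t$ built from $E_t$, the left-hand side is $\mu(E_t)^{\frac1q-1}\mu(E_{t+h})$. Since $\frac1q-1\le0$ and $\mu(E_{t+h})\le\mu(E_t)$, this quantity is bounded \emph{above} by $\mu(E_{t+h})^{1/q}$, so monotonicity runs the wrong way. Either of two repairs works:
\begin{itemize}
\item build the measure from $E_{t+h}$ instead, so that the left-hand side is exactly $\mu(E_{t+h})^{1/q}$ and the Riemann-sum argument goes through;
\item or pass to the limit $h\to0$ with Fatou's lemma, using that $t\mapsto\mu(E_t)$ is continuous at a.e.\ $t$.
\end{itemize}
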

The Lebesgue-space analogue ($\|u\|_{\Lup^q(\mu)}$ on the left-hand side) of \eqref{Lorentzq} follows from the local Poincar\'e inequality established in \cite{MPW}.  We emphasize, however, that the Lorentz-space refinement in \eqref{Lorentzq} appears to be new.  When $q=1$, inequality \eqref{Lorentzq} reduces to \eqref{newMZthm}.  
We now sketch the proof of {Corollary}~\ref{LorenttGradientq}  for $q>1$, which follows from \eqref{newMZthm} together with a simple duality argument.

Assume that $1 < q \le \frac{n}{n-1}$, and let $g \in \Lup^{q',\infty}(\mu)$ be a nonnegative function with
\[
\|g\|_{\Lup^{q',\infty}(\mu)} = 1.
\]
Then, by inequality \eqref{newMZthm}, if we let $\mathrm d\nu=g\dmu$, then we have
\[
\int_{\R^n} |u|\, g \dmu
\le C \int_{\R^n} |\nabla u|\, \textup{M}_1 \nu\dx
\]
Given any ball $B_r(x)$ we estimate
\[
r \fint_{B_r(x)} g \dmu
\le \, \frac{1}{v_nr^{n-1}}
\|g\|_{\Lup^{q',\infty}(\mu)} \,
\|\mathbf 1_{B_r(x)}\|_{\Lup^{q',1}(\mu)}
= \, \frac{\mu(B_r(x))^{\frac{1}{q}}}{v_nr^{n-1}}
= \frac{1}{v_n}\left(\frac{\mu(B_r(x))}{r^{q(n-1)}}\right)^{\frac{1}{q}}.
\]
Consequently, since $n-\al=q(n-1)$, we have
\[
\textup{M}_1 \nu
\le c\, (\textup{M}_\al \mu)^{\frac{1}{q}}.
\]

The endpoint case $q=\frac{n}{n-1}$ of inequality \eqref{LorenttGradientq} is of particular interest, as it corresponds to the scaling of the classical Gagliardo--Nirenberg--Sobolev inequality \eqref{GNS}. In fact, with $\mu=\mathcal L$, one recovers the classical GNS inequality, or more precisely its Lorentz-space refinement due to Alvino \cite{Al}. Moreover, when $q=\frac{n}{n-1}$, inequality \eqref{Lorentzq} reduces to
\begin{equation}\label{GNSmeasure}
\|u\|_{\Lup^{n',1}(\mu)}
\le C \|\nabla u\|_{\Lup^1\bigl((\textup{M}\mu)^{1-\frac1n}\bigr)},
\end{equation}
which is a Lorentz-space improvement of \eqref{GN-mu}. 

Inequality \eqref{GNSmeasure} yields the weighted Sobolev inequalities and the corresponding bounds for Riesz potentials originally obtained by Muckenhoupt and Wheeden \cite{MW}. Indeed, if $w\in \textup{A}_1$ and $\dmu = w\dx$, then \eqref{GNSmeasure} implies
\[
\|u\|_{\Lup^{\frac{n}{n-1}}(w)}
\le C \|\nabla u\|_{\Lup^1\bigl(w^{1-\frac1n}\bigr)}.
\]
Since this estimate holds uniformly for all $w\in \textup{A}_1$, the off-diagonal extrapolation theorem of Harboure, Mac\'ias, and Segovia \cite{HMS} yields
\[
\|u\|_{\Lup^{p^*}(w)}
\le C \|\nabla u\|_{\Lup^p\bigl(w^{1-\frac{p}{n}}\bigr)},
\qquad
w\in \A_{\frac{p^*}{n'}}, \quad 1<p<n.
\]
(A sharp version of this off-diagonal extrapolation with optimal bounds can be found in \cite{LMPT}. See also \cite[Corollary 1.4]{PR2} for a local version with precise bounds.) Finally, applying this inequality with $u = \textup{I}_1 f$, we obtain
\[
\|\textup{I}_1 f\|_{\Lup^{p^*}(w)}
\le C \|\nabla(\textup{I}_1 f)\|_{\Lup^p\bigl(w^{1-\frac{p}{n}}\bigr)}
= C \|\mathbf R f\|_{\Lup^p\bigl(w^{1-\frac{p}{n}}\bigr)}
\le C \|f\|_{\Lup^p\bigl(w^{1-\frac{p}{n}}\bigr)},
\]
where the last inequality follows from the boundedness of the Riesz transforms on $\Lup^p\bigl(w^{1-\frac{p}{n}}\bigr)$. This is justified by the fact that $w\in \A_{\frac{p^*}{n'}}$ guarantees the boundedness of the Riesz transforms. In fact, a more general version of this principle, which does not rely on extrapolation, is given in Theorem~\ref{wp1imply}.

When $1<q\leq \frac{n}{n-1}$, there is an alternative approach to inequality
\eqref{Lorentzq} based on the subrepresentation formula \eqref{subrepresentation}.
For $0<\beta<n$, the Riesz potential $\textup{I}_\beta$ can be written as a
convolution with a constant multiple of the kernel
\[
k_\beta(x)=|x|^{\beta-n}
\in \Lup^{\frac{n}{n-\beta},\infty}(\R^n).
\]
If $1<q\leq \frac{n}{n-\beta}$ then
\begin{equation*}
\bigl\|k_\beta\bigr\|_{\Lup^{q,\infty}(\mu)}= \sup_{\lambda>0}\lambda\,\mu\bigl(\{x\in\R^n:\ |x|^{\beta-n}>\lambda\}\bigr)^{\frac1q}=\Bigl(\sup_{r>0}\frac{\mu(B_r(0))}{r^{n-\al}}\Bigr)^{\frac{1}{q}}=c(\textup{M}_\al\mu)(0)^\frac1q
\end{equation*}
where $\al=n-q(n-\beta)$. Thus $k_\beta\in \Lup^{q,\infty}(\mu)$ if and only if $\textup M_{\al}\mu(0)<\infty$.  Applying Minkowski’s integral inequality and the calculation translated, yields the weak-type estimate
\begin{equation}\label{Minkow}
\|\textup{I}_\beta f\|_{\Lup^{q,\infty}(\mu)}
\le C \int_{\R^n} |f(y)|
\,\bigl\||\cdot-y|^{\beta-n}\bigr\|_{\Lup^{q,\infty}(\mu)}\dy
= C \int_{\R^n} |f(y)|\, (\textup{M}_\al\mu(y))^{\frac{1}{q}}\dy,
\end{equation}
Consequently,
\[
\textup{I}_\beta:\Lup^1\bigl((\textup{M}_\al\mu)^{\frac{1}{q}}\bigr)
\longrightarrow \Lup^{q,\infty}(\mu)
\]
where $1<q\leq \frac{n}{n-\beta}$, $\al=n-q(n-\beta)$.

Applying this bound to the subrepresentation formula \eqref{subrepresentation}, for $\beta=1$, $1<q\leq \frac{n}{n-1}$, and $\al=n-q(n-1)$
yields the weak Sobolev inequality
\begin{equation}\label{weakGNS}
\|u\|_{\Lup^{q,\infty}(\mu)}
\le C \|\textup{I}_1(|\nabla u|)\|_{\Lup^{q,\infty}(\mu)}
\le C \|\nabla u\|_{\Lup^1\bigl((\textup{M}_\al\mu)^{\frac1q}\bigr)}.
\end{equation}
One may then apply the weak–implies–strong principle (see Theorem~\ref{weakstrong}) to obtain the Lorentz inequality \eqref{Lorentzq}. In fact, in this measure-theoretic setting and for $q>1$, the Lorentz inequality \eqref{Lorentzq} is \emph{equivalent} to the weak-type inequality \eqref{weakGNS}, as well as to the subrepresentation formula \eqref{subrepresentation} and a corresponding isoperimetric inequality. Although each of these inequalities holds—and some are already known—the equivalence among them is new.

\begin{theorem}\label{characterizethm}
Suppose $1<q\leq \frac{n}{n-1}$ and $\al=n-q(n-1)$.  The following statements are equivalent:
\begin{enumerate}
\item The Lorentz Sobolev inequality
\[
\|u\|_{\Lup^{q,1}(\mu)}
\le C_1 \|\nabla u\|_{\Lup^1\bigl((\textup{M}_\al\mu)^{\frac1q}\bigr)}
\]
holds for all locally finite Borel measures $\mu$ and all
$u\in \Lip_c(\R^n)$;


\item The subrepresentation formula
\[
|u(x)| \le C_2\, \textup{I}_1(|\nabla u|)(x)
\]
holds for all $u\in \Lip_c(\R^n)$ and all $x\in \R^n$;

\item The weak-type Sobolev inequality
\[
\|u\|_{\Lup^{q,\infty}(\mu)}
\le C_3 \|\nabla u\|_{\Lup^1\bigl((\textup{M}_\al\mu)^{\frac1q}\bigr)}
\]
holds for all locally finite Borel measures $\mu$ and all
$u\in \Lip_c(\R^n)$;

\item The isoperimetric inequality
\begin{equation}\label{qisoper}
\mu(\Omega)^{\frac{1}{q}}
\le C_4\, \Per\!\big(\Omega,(\textup{M}_\al\mu)^{\frac1q}\big)
\end{equation}
holds for all locally finite Borel measures $\mu$ and all bounded open sets
$\Omega$ with finite $(\textup{M}_\al\mu)^{\frac1q}$-perimeter.
\end{enumerate}
Moreover, the constants are related to each other by a dimensional multiple.
\end{theorem}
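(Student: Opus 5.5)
We prove the equivalences along the cycle (2)$\Rightarrow$(3)$\Rightarrow$(1)$\Rightarrow$(4)$\Rightarrow$(3), and we close it with (3)$\Rightarrow$(2), i.e. by recovering the subrepresentation formula from the measure inequalities.

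\emph{(2)$\Rightarrow$(3).} This is the computation preceding the theorem. Minkowski's inequality in $\Lup^{q,\infty}(\mu)$ (which is normable for $q>1$, up to a constant depending only on $q$) together with the identity $\||\cdot-y|^{1-n}\|_{\Lup^{q,\infty}(\mu)}=c\,(\textup M_\al\mu)(y)^{1/q}$ gives \eqref{Minkow} with $\beta=1$. Composing with (2) yields (3) with $C_3=cC_2$.

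\emph{(3)$\Rightarrow$(1).} We use the weak-implies-strong principle, Theorem~\ref{weakstrong}, i.e. Maz'ya truncation. Apply (3) to the truncations $u_k=\min\{(|u|-2^k)_+,2^k\}$. Their gradients are supported on the disjoint sets $\{2^k<|u|\le 2^{k+1}\}$, and $u_k\ge 2^k$ on $\{|u|>2^{k+1}\}$. Hence
\[
\|u\|_{\Lup^{q,1}(\mu)}\lesssim \sum_k 2^k\mu(\{|u|>2^{k+1}\})^{1/q}\le C_3\sum_k\|\nabla u_k\|_{\Lup^1((\textup M_\al\mu)^{1/q})}\le C_3\|\nabla u\|_{\Lup^1((\textup M_\al\mu)^{1/q})}.
\]

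\emph{(1)$\Rightarrow$(4).} If $\textup M_\al\mu\equiv\infty$ there is nothing to prove. Otherwise $w=(\textup M_\al\mu)^{1/q}\in \A_1$ by Theorem~\ref{MalA1}, since a power $\le1$ of an $\A_1$ weight is still $\A_1$. Approximate $\mathbf 1_\Omega$ by Lipschitz functions $u_j\to\mathbf 1_\Omega$ with $\|\nabla u_j\|_{\Lup^1(w)}\to \Per(\Omega,w)$, using the smooth approximation of \cite{BGKM} as in Corollary~\ref{BVextens}. We then need lower semicontinuity of the left side. Since $\Omega$ is open we can take $u_j=\eta_j*\mathbf 1_{\Omega}$-type approximants that converge to $1$ pointwise on $\Omega$, and Fatou in the Lorentz quasi-norm gives $\mu(\Omega)^{1/q}\le c\liminf\|u_j\|_{\Lup^{q,1}(\mu)}$. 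The delicate point is that $\mu$ need not be absolutely continuous, so the convergence must hold everywhere on $\Omega$ and not merely $\mathcal L$-a.e. Openness of $\Omega$ gives this, because mollifiers equal $1$ on compact subsets for large $j$. Boundedness of $\Omega$ keeps the approximants compactly supported.

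\emph{(4)$\Rightarrow$(3).} Use the coarea formula. For $u\in\Lip_c$ the sets $\Omega_t=\{|u|>t\}$ are bounded and open, and
\[
\int|\nabla u|\,w\dx=\int_0^\infty \Per(\Omega_t,w)\dt.
\]
By (4), $\Per(\Omega_t,w)\ge C_4^{-1}\mu(\Omega_t)^{1/q}$, so
\[
\int_0^\infty\mu(\Omega_t)^{1/q}\dt\le C_4\|\nabla u\|_{\Lup^1(w)},
\]
and the left side is comparable to $\|u\|_{\Lup^{q,1}(\mu)}\ge\|u\|_{\Lup^{q,\infty}(\mu)}$. This in fact gives (1) directly.

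\emph{(3)$\Rightarrow$(2).} This is the step I expect to be the main obstacle, since (2) is a pointwise statement about Lebesgue measure and must be extracted from inequalities that hold uniformly over all measures. Fix $x$ and test (3) with $\mu=\delta_x$. We have $\|u\|_{\Lup^{q,\infty}(\delta_x)}=|u(x)|$. Moreover $\delta_x(B_r(y))\le \mathbf 1_{|x-y|<r}$, so
\[
\textup M_\al\delta_x(y)=\sup_{r>|x-y|}c\,r^{\al-n}=c|x-y|^{\al-n},
\]
and therefore $(\textup M_\al\delta_x)^{1/q}(y)=c|x-y|^{(\al-n)/q}=c|x-y|^{1-n}$ because $n-\al=q(n-1)$. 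Thus (3) yields
\[
|u(x)|\le C_3c\int|\nabla u(y)||x-y|^{1-n}\dy=C\,\textup I_1(|\nabla u|)(x),
\]
which is (2) with $C_2$ a dimensional multiple of $C_3$. Every implication loses only dimensional factors (and $q$-dependent ones, where $q$ lies in a fixed range), which gives the final claim about the constants.
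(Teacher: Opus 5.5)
Your cycle is organized differently from the paper's, which runs (1)$\Rightarrow$(2)$\Rightarrow$(3)$\Rightarrow$(4)$\Rightarrow$(1).

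\emph{Differences in route.} The paper applies the Dirac-mass test to (1) rather than to (3). Both work, since $\|u\|_{\Lup^{q,1}(\delta_x)}=\|u\|_{\Lup^{q,\infty}(\delta_x)}=|u(x)|$. The paper also never uses Maz'ya truncation. The mollification argument of Corollary~\ref{isoperimend} applies directly to the weak-type inequality (3): for $0\le u_j\le 1$ with $u_j\to 1$ everywhere on $\Omega$, one still has $\mu(\Omega)^{1/q}\le\liminf_j\|u_j\|_{\Lup^{q,\infty}(\mu)}$. Coarea then gives (4)$\Rightarrow$(1). Your (3)$\Rightarrow$(1) via Theorem~\ref{weakstrong} is correct. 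It is redundant in your own scheme, though, since you yourself observe that coarea already turns (4) into (1).

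\emph{The gap: (1)$\Rightarrow$(4).} This step is not actually established. You take the gradient control from the approximants of \cite{BGKM} and the everywhere convergence on $\Omega$ from $\eta_j*\mathbf 1_\Omega$. These are different families, and nothing you cite gives $\|\nabla(\eta_j*\mathbf 1_\Omega)\|_{\Lup^1(w)}\to\Per(\Omega,w)$. The exact limit is not needed. What is needed is
\[
\limsup_j\|\nabla(\eta_j*\mathbf 1_\Omega)\|_{\Lup^1(w)}\le C\,\Per(\Omega,w),
\]
and this is where $w\in\A_1$ really enters. Take $\boldsymbol\Phi\in\Lip_c(\R^n,\R^n)$ with $|\boldsymbol\Phi|\le w$, and $\eta$ radial. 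Then $\int(\eta_j*\mathbf 1_\Omega)\Div\boldsymbol\Phi\dx=\int_\Omega\Div(\eta_j*\boldsymbol\Phi)\dx$. If $\eta$ is also radially decreasing, then $|\eta_j*\boldsymbol\Phi|\le\eta_j*w\le\textup{M}w\le Cw$ at every point. Hence $C^{-1}\eta_j*\boldsymbol\Phi$ is admissible in the definition of $\Per(\Omega,w)$, and taking the supremum over $\boldsymbol\Phi$ gives the bound. What makes this legitimate is the everywhere (not merely a.e.) form of the $\A_1$ condition from Theorem~\ref{MalA1}, because $|D\mathbf 1_\Omega|$ is singular with respect to $\mathcal L$.

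Relatedly, your justification of $w\in\A_1$ (``a power $\le 1$ of an $\A_1$ weight'') breaks down at $\al=0$. There $\textup{M}\mu$ need not even be locally integrable (take $\mu=\delta_0$). Instead, apply Theorem~\ref{MalA1} directly with $r=1/q<n/(n-\al)$.
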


\begin{remark}
It is imperative in Theorem~\ref{characterizethm} that $q>1$, particularly for the implication $(2)\Rightarrow(3)$.  
The underlying reason is that the weak Lorentz space $\Lup^{1,\infty}$ is not normed, and therefore the computations leading to \eqref{Minkow} are not valid.  
Indeed, the endpoint inequality \eqref{falseendptIal} that would result in the case $q=1$ is false.

It is natural to ask how Theorem~\ref{characterizethm} relates to the weak boundedness of the Riesz potential $\textup{I}_1$, namely,
\begin{equation}\label{weakbddI1}
\textup{I}_1 : \Lup^{1}\big((\textup M_\al\mu)^{\frac1q}\big) \to \Lup^{q,\infty}(\mu),
\end{equation}
for $1<q\leq \frac{n}{n-1}$ and $\al = n - q(n-1)$.  
The mapping property \eqref{weakbddI1} implies condition~(3) in Theorem~\ref{characterizethm}.  
Whether the Sobolev inequality itself implies the weak boundedness \eqref{weakbddI1} remains an open question.
Theorem~\ref{characterizethm}(4) generalizes the isoperimetric inequality established in \cite{PR2}, which was proved in the endpoint case \( q = \tfrac{n}{n-1} \) for sufficiently smooth sets. 

In the case \( q = \frac{n}{n-1} \), inequality \eqref{qisoper} reduces to the isoperimetric inequality \eqref{isoperintro} for sets with sufficiently smooth boundary. Moreover, when \( \mathrm d\mu = w\,\mathrm dx \) with a weight \( w \in \A_1 \), we have \( \mathrm M\mu \leq C w \). Consequently, \eqref{qisoper} recovers the weighted isoperimetric inequality first established in \cite{DS} for sets with smooth boundary and later extended to more general settings in \cite[Section 2.4]{Tur}:
\[
w(\Omega)^{\frac{n-1}{n}}
\leq
C \int_{\partial \Omega} w^{1-\frac{1}{n}} \,\mathrm d\mathcal H^{n-1}.
\]

In contrast, our result yields an entire family of isoperimetric inequalities valid for more general sets. Specifically, for \( 1 \leq q \leq \frac{n}{n-1} \), we obtain
\[
\mu(\Omega)^{\frac{1}{q}}
\leq
C \int_{\partial^* \Omega} (\mathrm M_\alpha \mu)^{\frac{1}{q}} \, \mathrm d\mathcal H^{n-1}.
\]
\end{remark}

Finally, we come full circle and return to the two-weight Sobolev inequalities \eqref{twowSob}. 
We begin by addressing the longstanding question of optimal bump conditions in the diagonal 
case $p=q$. 

To place the result in its proper context, we refer the reader to Section~\ref{prelim} for the relevant definitions of Young functions and Orlicz averages. Given a Young function~$\Psi$, one may define the associated Orlicz average over a cube~$Q$. The bump condition seeks to quantify precisely how much the standard averages in~\eqref{Apq} must be enlarged to obtain a sufficient condition.

For $1<p\le q<\infty$, Cruz-Uribe and Moen~\cite{CM} introduced the off-diagonal integrability condition $\Psi\in \textup B_{p,q}$, defined by
\begin{equation} \label{Bpq}
\int_1^\infty \frac{\Psi(t)^{\frac{q}{p}}}{t^{q+1}}\mathrm dt<\infty.
\end{equation}

In the strict off-diagonal case $1<p<q<\infty$, they showed that if $(w,v)$ is a pair of weights satisfying
\begin{equation}\label{offdiagbump}
\sup_Q \ell(Q)\,\Leb(Q)^{\frac1q-\frac1p}
\|w\|_{\Ldash^\Psi(Q)}
\left(\fint_Q v^{1-p'}\dx\right)^{\frac1{p'}}
<\infty,
\end{equation}
and if the associate Young function $\bar{\Psi}$ belongs to $\textup B_{q',p'}$, then
\[
\textup{I}_1:\Lup^p(v)\to \Lup^{q,\infty}(w).
\]
A typical Young function $\Psi$ for which $\bar{\Psi}\in  \textup B_{q',p'}$ is given by $\Psi(t)=t^q\log(\mathrm e+t)^{\frac{q}{p'}+\ep}$ for $\ep>0$.  In particular, by the truncation method Theorem \ref{weakstrong}, condition~\eqref{offdiagbump} is sufficient for the two-weight Sobolev inequality~\eqref{twowSob} in the strict off-diagonal case $p<q$.

In the diagonal case $p=q$, however, the situation is more subtle. Previously, this result was known to hold only under non-optimal logarithmic bump conditions~\eqref{nonoptlogbump}. When $p=q$, the $\textup B_{p,q}$ condition reduces to the $\textup B_p$ condition introduced by the second author~\cite{Per94}. We now obtain the following sharp result.

\begin{theorem} \label{optimalppbump}
Suppose $1<p<\infty$ and $\Psi$ is a Young function such that $\bar{\Psi}\in \textup B_{p'}$. 
If $(w,v)$ is a pair of weights satisfying
\[
\sup_Q \ell(Q)\,\|w^{\frac1p}\|_{\Ldash^\Psi(Q)}
\left(\fint_Q v^{1-p'}\dx\right)^{\frac1{p'}}<\infty,
\]
then the Sobolev inequality
\[
\|u\|_{\Lup^p(w)}\le C\|\nabla u\|_{\Lup^p(v)}
\]
holds for all $u\in \Lip_c(\R^n)$ and Riesz potential/Riesz transform bound
$$\|\textup{I}_1f\|_{\Lup^p(w)}\le C\|\mathbf R f\|_{\Lup^p(v)}$$
holds for all $f\in \Con_c^\infty(\R^n)$. In particular, one may take $\Psi(t)=t^p\log(\mathrm e+t)^{p-1+\ep}$
for any $\ep>0$.
\end{theorem}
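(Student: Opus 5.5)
The plan is to use duality to reduce both inequalities to a two-weight norm inequality for the fractional maximal operator $\textup{M}_1$, and then to invoke a bump theorem for $\textup{M}_1$. Such a theorem needs a bump on only one weight, which is why the diagonal condition can be made optimal.

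Fix $u\in\Lip_c(\R^n)$ and a nonnegative $g$ with $\|g\|_{\Lup^{p'}(w)}=1$. By monotone convergence it suffices to take $g$ bounded with compact support. Set $\mathrm d\mu=g\,w\dx$; this is a locally finite Borel measure. Theorem~\ref{Gradient} and H\"older's inequality give
\[
\int_{\R^n}|u|\,g\,w\dx\le C\int_{\R^n}|\nabla u|\,\textup{M}_1(gw)\dx
\le C\|\nabla u\|_{\Lup^p(v)}\,\|\textup{M}_1(gw)\|_{\Lup^{p'}(v^{1-p'})}.
\]
Write $h=gw$. Then $\|g\|_{\Lup^{p'}(w)}=\|h\,w^{-1/p}\|_{\Lup^{p'}}=\|h\|_{\Lup^{p'}(w^{1-p'})}$. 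Taking the supremum over $g$, it therefore suffices to prove the two-weight bound
\[
\textup{M}_1:\Lup^{p'}(w^{1-p'})\longrightarrow \Lup^{p'}(v^{1-p'}).
\]

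For this I would apply P\'erez's two-weight bump theorem for fractional maximal operators \cite{Per94}, in the form: for $1<r<\infty$ and a Young function $\Phi$ with $\bar\Phi\in\textup B_r$,
\[
\textup{M}_\alpha:\Lup^r(\mathsf u)\to\Lup^r(\sigma)\quad\text{whenever}\quad \sup_Q\ell(Q)^\alpha\Bigl(\fint_Q\sigma\Bigr)^{1/r}\|\mathsf u^{-1/r}\|_{\Ldash^\Phi(Q)}<\infty .
\]
Here $\mathsf u$ is notation for the source weight, not the function $u$ above. Take $r=p'$, $\alpha=1$, $\sigma=v^{1-p'}$, $\mathsf u=w^{1-p'}$ and $\Phi=\Psi$. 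Then $\mathsf u^{-1/p'}=w^{1/p}$, so the sufficient condition is exactly the hypothesis of Theorem~\ref{optimalppbump} with $\bar\Psi\in\textup B_{p'}$. This proves the Sobolev inequality.

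The Riesz potential/Riesz transform bound follows from the same duality argument, with Theorem~\ref{RieszRieszThm} used in place of Theorem~\ref{Gradient}. For $f\in\Con_c^\infty(\R^n)$ and the same $g$,
\[
\int_{\R^n}|\textup{I}_1f|\,g\,w\dx\le C\int_{\R^n}|\mathbf Rf|\,\textup{M}_1(gw)\dx .
\]
H\"older's inequality and the same $\textup{M}_1$ bound then give the result. Routing the argument through Theorem~\ref{RieszRieszThm} avoids truncating $\textup{I}_1f$, which is not compactly supported. For the example $\Psi(t)=t^p\log(\mathrm e+t)^{p-1+\ep}$, one has $\bar\Psi(t)\approx t^{p'}\log(\mathrm e+t)^{-(p-1+\ep)(p'-1)}=t^{p'}\log(\mathrm e+t)^{-1-\ep(p'-1)}$. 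The $\textup B_{p'}$ integral $\int_1^\infty \bar\Psi(t)t^{-p'-1}\,\mathrm dt$ then converges because the logarithmic exponent exceeds $1$.

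The main obstacle is justifying the two-weight $\textup{M}_1$ theorem in exactly this form. One must check the convention relating $\|\mathsf u^{-1/r}\|_{\Ldash^\Phi}$ to $\Psi$, and confirm that $\textup B_{p'}$ on $\bar\Psi$ is the condition P\'erez's argument requires. If a direct citation is not available, I would reprove it by the standard route. First discretize $\textup{M}_1$ to dyadic grids, at the cost of a dimensional constant using the one-third trick. Then use a sparse family of cubes $\{Q_j\}$ with $\textup{M}_1h\lesssim\sum_j\ell(Q_j)\fint_{Q_j}h\,\mathbf 1_{E_j}$ on disjoint sets $E_j\subseteq Q_j$. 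Next apply the generalized H\"older inequality $\fint_Q h\le 2\|h\,w^{-1/p}\|_{\Ldash^{\bar\Psi}(Q)}\|w^{1/p}\|_{\Ldash^\Psi(Q)}$ together with the hypothesis. This reduces matters to the $\Lup^{p'}$ boundedness of the Orlicz maximal operator $\textup{M}_{\bar\Psi}$, which holds precisely when $\bar\Psi\in\textup B_{p'}$.
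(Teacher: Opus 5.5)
Your proposal is correct and follows the paper's proof: duality, Theorem~\ref{Gradient} with H\"older's inequality, and then the one-sided bump theorem for $\textup{M}_1$ (Lemma~\ref{strongbumpM} with $p=q$ replaced by $p'$ and the weight pair $(v^{1-p'},w^{1-p'})$). Your pairing against $g\,w\,\mathrm dx$ with $h=gw$ is just a reparametrization of the paper's $\Lup^{p'}(w^{1-p'})$ duality. Your other two additions fill points the paper's proof leaves implicit: you obtain the Riesz potential/Riesz transform bound by running the same duality through Theorem~\ref{RieszRieszThm}, and you check that the example $\Psi$ satisfies $\bar\Psi\in\textup B_{p'}$.
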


Next, we establish the sharpness of these estimates and consider the problem of extending 
inequality \eqref{Lorentzq} to the case $p>1$. We show that such an extension cannot 
hold with a weight $w$ and the maximal function $(\textup{M}_\alpha w)^{\frac{p}{q}}$ appearing on the 
right-hand side. As a replacement, we prove that the inequality does hold for $p>1$ 
provided an appropriately bumped maximal function appears on the right-hand side. 

To this end, we introduce the Orlicz fractional maximal function associated with a Young function $\Theta$ and $0\le \alpha<n$:
\begin{equation}\label{OrliczMax}
\textup{M}_{\alpha,\Theta} f(x)
= \sup_{Q\ni x} \ell(Q)^{\alpha} \|f\|_{\Ldash^\Theta(Q)}.
\end{equation}
When $\alpha=0$, we simply write $\textup M_{0,\Theta} = \textup M_\Theta$. 

We show that, in general, a logarithmic bump is unavoidable when $p>1$, and we identify the sharp logarithmic 
maximal function for which the Sobolev inequality holds. Our main result is the following.

\begin{theorem}\label{bumpreplacep>1}
Suppose $1<p<n$, $p\le q\le p^*$, 
$\alpha=n-\frac{q}{p}(n-p)$, and $\varepsilon>0$. 
Then, for any weight $w$, the Sobolev inequality
\[
\|u\|_{\Lup^q(w)}\le C\|\nabla u\|_{\Lup^p\big((\textup{M}_{\alpha,\Theta}w)^{\frac{p}{q}}\big)}
\]
holds for all $u\in \Lip_c(\R^n)$, where $\Theta(t)=t[\log(\mathrm e+t)]^{\frac{q}{p'}+\varepsilon}.$
Moreover, the inequality fails when $\varepsilon=0$.
\end{theorem}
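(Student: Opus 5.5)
The plan is to reduce the Sobolev inequality to a two-weight estimate for the Riesz potential $\textup{I}_1$ via the subrepresentation formula \eqref{subrepresentation}, and then to a weak-type estimate upgraded by Theorem~\ref{weakstrong}. Set $v=(\textup{M}_{\alpha,\Theta}w)^{p/q}$.

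\textbf{Step 1: reduce to a bump condition.} By \eqref{subrepresentation} it suffices to prove $\textup{I}_1:\Lup^p(v)\to\Lup^{q,\infty}(w)$, since the strong Sobolev inequality then follows from Theorem~\ref{weakstrong}. For this I will invoke the Cruz-Uribe--Moen bump theorem \eqref{offdiagbump} (case $p<q$) and Theorem~\ref{optimalppbump} (case $p=q$), with the Young function $\Psi(t)=t^q\log(\mathrm e+t)^{\frac{q}{p'}+\varepsilon'}$, $0<\varepsilon'<\varepsilon$. So the task is to verify, uniformly over cubes $Q$,
\[
\ell(Q)\,\Leb(Q)^{\frac1q-\frac1p}\|w^{1/q}\|_{\Ldash^{\Psi}(Q)}\Big(\fint_Q v^{1-p'}\Big)^{\frac1{p'}}\lesssim 1 .
\]

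\textbf{Step 2: verify the bump condition.} For $x\in Q$ we have $\textup{M}_{\alpha,\Theta}w(x)\ge \ell(Q)^\alpha\|w\|_{\Ldash^\Theta(Q)}$. Hence
\[
\Big(\fint_Q v^{1-p'}\Big)^{1/p'}\le \big(\ell(Q)^\alpha\|w\|_{\Ldash^\Theta(Q)}\big)^{-1/q}.
\]
Next, $\|w^{1/q}\|_{\Ldash^\Psi(Q)}\approx \|w\|_{\Ldash^{\Theta}(Q)}^{1/q}$ with $\Theta(t)=\Psi(t^{1/q})\approx t\log(\mathrm e+t)^{\frac q{p'}+\varepsilon'}$, which is dominated by the stated $\Theta$. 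The exponent bookkeeping is then
\[
1+n\Big(\frac1q-\frac1p\Big)-\frac{\alpha}{q}=0
\quad\text{exactly when}\quad \alpha=n-\frac qp(n-p),
\]
which is the relation in the hypothesis, so the product is bounded. The one point to check is that the associate $\bar\Psi$ lies in $\textup B_{q',p'}$. This is the standard computation for logarithmic bumps: the power $\frac q{p'}+\varepsilon$ is exactly the threshold.

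\textbf{Step 3: sharpness at $\varepsilon=0$.} This is the main obstacle. I would take a weight concentrated near the origin, for instance $w(x)=|x|^{-n}\log(1/|x|)^{-\gamma}\mathbf 1_{B_{1/2}}$ with $\gamma$ tuned so that $\textup{M}_{\alpha,\Theta_0}w$ with $\Theta_0(t)=t\log(\mathrm e+t)^{q/p'}$ stays controlled. The test functions would be radial $u_R$ that are logarithmically adapted, for example $u(x)=\log(1/|x|)^{\beta}$ truncated.

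I would compute both sides explicitly: the left side through $\int |u|^qw$, and the right side through $|\nabla u|^p\,(\textup{M}_{\alpha,\Theta_0}w)^{p/q}$, whose size near $0$ behaves like $|x|^{-(n-\alpha)p/q}$ times logarithmic factors. The goal is to choose the parameters so that the left side diverges while the right side stays finite.

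If the direct computation is too delicate, a fallback is duality. By duality, the Sobolev inequality implies a testing inequality that is essentially the two-weight condition with the bump $\Psi_0$ replaced by its $\varepsilon=0$ version, and the known failure of the $\textup B_p$ condition at the threshold then yields a counterexample. I expect the log-power matching in this step to be the hardest part.
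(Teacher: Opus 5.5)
Your Steps 1--2 correctly prove the positive part, by a slightly different route from the paper's. The paper applies the two-sided strong-type bump (Lemma~\ref{strongbumpI}, via Theorem~\ref{generalmax} with $\beta=1$). It notes that the $v$-side bump is free, since $v^{-1/p}=(\textup{M}_{\alpha,\Theta}w)^{-1/q}\le(\ell(Q)^\alpha\|w\|_{\Ldash^\Theta(Q)})^{-1/q}$ pointwise on $Q$. You instead use the one-sided weak-type bump plus Theorem~\ref{weakstrong} for $p<q$, and Theorem~\ref{optimalppbump} for $p=q$. The latter gives the Sobolev inequality directly rather than a weak $(p,p)$ bound for $\textup{I}_1$, which is harmless here.

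The genuine gap is Step~3: you produce no counterexample, and neither plan leads to one as stated.

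Your primary plan looks in the wrong place. Take $w=|x|^{-n}\log(1/|x|)^{-\gamma}\mathbf 1_{B_{1/2}}$ with $\gamma>1+\frac q{p'}$, which is needed for the right side to be finite. Near $0$ one finds $\textup{M}_{\alpha,\Theta_0}w(x)\approx|x|^{\alpha-n}\log(1/|x|)^{1-\gamma+q/p'}$, where $\Theta_0(t)=t\log(\mathrm e+t)^{q/p'}$. For radial $u$ supported in $B_{1/2}$, the substitution $t=\log(1/|x|)$ turns the inequality into the one-dimensional Hardy inequality $\|U\|_{\Lup^q(t^{-\gamma}\mathrm dt)}\lesssim\|U'\|_{\Lup^p(t^{b}\mathrm dt)}$, with $b=\frac{(1-\gamma)p}{q}+p-1$. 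Its constant $\sup_s(\int_s^\infty t^{-\gamma}\,\mathrm dt)^{1/q}(\int_{\log 2}^s t^{-b/(p-1)}\,\mathrm dt)^{1/p'}\approx s^{(1-\gamma)/q}s^{(\gamma-1)/q}$ is bounded. So logarithmically adapted test functions at the origin cannot break the inequality.

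The fallback is also flawed. The Sobolev inequality is not an operator inequality, so it cannot be dualized until you convert it into $\textup{I}_1:\Lup^p(v)\to\Lup^q(w)$. The paper does this by taking $u=\textup{I}_1f$, using $\nabla\textup{I}_1f=-\mathbf Rf$ and the boundedness of $\mathbf R$ on $\Lup^p(v)$, which holds since $v\in\A_1$. Moreover, the failure of $\textup B_p$ at the threshold only says the sufficiency theorem does not apply. Your own Step~2 computation shows the $\varepsilon=0$ bump condition actually \emph{holds} for $(w,v)$, so this yields no contradiction by itself.

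The missing idea is that the fatal logarithm comes from large cubes around a small set, i.e.\ from infinity. Take $w=\mathbf 1_B$ with $B=B_1(0)$. For $|x|\ge\mathrm e$ one has $\textup{M}_{\alpha,\Theta_0}\mathbf 1_B(x)\approx|x|^{\alpha-n}(\log|x|)^{q/p'}$. Hence $v\approx|x|^{p-n}(\log|x|)^{p-1}$, using $(n-\alpha)\frac pq=n-p$ and $\frac q{p'}\cdot\frac pq=p-1$. The paper then dualizes the $\textup{I}_1$ bound, tests on $g=\mathbf 1_B$, and reaches $\int_{|x|>2}\frac{\mathrm dx}{|x|^n\log|x|}=\infty$.

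You can also argue directly in primal form, which avoids approximating $u=\textup{I}_1f$ by compactly supported functions. Set $u_R=1$ on $B_{\mathrm e}(0)$, $u_R(x)=1-\frac{\log\log|x|}{\log\log R}$ for $\mathrm e\le|x|\le R$, and $u_R=0$ for $|x|\ge R$. Then $\|u_R\|_{\Lup^q(w)}=\Leb(B)^{1/q}$, while
\[
\|\nabla u_R\|_{\Lup^p(v)}^p\approx\frac{1}{(\log\log R)^{p}}\int_{\mathrm e}^R\frac{\mathrm dr}{r\log r}=(\log\log R)^{1-p}\longrightarrow 0
\]
as $R\to\infty$, since $p>1$. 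In other words, the $v$-capacity of $B$ is zero, so no constant $C$ can work.
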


Returning to Alvino's Lorentz-space Sobolev inequality \eqref{alvinoineq}, or equivalently in terms of the Riesz potential and Riesz transforms, we may write
\[
\|\textup{I}_1 f\|_{\Lup^{\frac{n}{n-1},1}(\R^n)} \le C\,\|\mathbf R f\|_{\Lup^1(\R^n)}.
\]
Spector \cite{Spector} (see also \cite{SSVS}) extended this result to the full range $0<\alpha<n$, establishing that
\begin{equation}\label{spectorresult}
\|\textup{I}_\alpha f\|_{\Lup^{\frac{n}{n-\alpha},1}(\R^n)} \le C\,\|\mathbf R f\|_{\Lup^1(\R^n)}.
\end{equation}

In the context of measures, however, the behavior for $\alpha \neq 1$ remains largely unknown. If inequality \eqref{conject} were valid for $\alpha \neq 1$, it would provide a measure-theoretic analogue of Spector’s theorem. Although the general case is still open, we are able to obtain a new strong $(\tfrac{n}{n-\alpha},1)$ endpoint bound for $\alpha>1$, exploiting the strong mapping properties of the fractional integral operator $\textup{I}_\alpha$.

\begin{theorem}\label{al>1endpt}
Suppose that $1<\alpha<n$, $\varepsilon>0$, and $w$ is a weight. Then
\begin{equation*}\label{al>1bump}
\|\textup{I}_\alpha f\|_{\Lup^{\frac{n}{n-\alpha}}(w)}
\leq
C\,\|\mathbf R f\|_{\Lup^1\big((\textup{M}_\Psi w)^{1-\frac{\alpha}{n}}\big)},
\end{equation*}
where $\Psi(t)=t[\log(\mathrm e+t)]^{\frac{1}{n-\alpha}+\varepsilon}$. In particular, if $w\in \A_1$, then
\begin{equation}\label{al>1A1}
\|\textup{I}_\alpha f\|_{\Lup^{\frac{n}{n-\alpha}}(w)}
\leq
C\,\|\mathbf R f\|_{\Lup^1\big(w^{1-\frac{\alpha}{n}}\big)}.
\end{equation}
\end{theorem}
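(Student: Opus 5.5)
We factor $\textup{I}_\alpha=\textup{I}_{\alpha-1}\circ\textup{I}_1$, which is legitimate because $\alpha>1$ and the Riesz potentials form a semigroup. The order-one piece will be handled by the Lorentz endpoint Sobolev inequality (Corollary \ref{LorenttGradientq} with $q=\frac{n}{n-1}$), and the order-$(\alpha-1)$ piece by a strong two-weight bound for $\textup{I}_{\alpha-1}$ from $\Lup^{n',1}$ into $\Lup^{\frac{n}{n-\alpha}}(w)$.

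The plan is to work with an auxiliary measure built from $w$. Set $g=\textup{I}_1 f$, so that $\nabla g=-\mathbf R f$, and let $r=\frac{n}{n-\alpha}$. By duality and Minkowski, $\|\textup{I}_{\alpha-1}g\|_{\Lup^r(w)}$ is controlled by the action of $\textup{I}_{\alpha-1}$ on $|g|$, so we want a measure $\mu$ with
\[
\|\textup{I}_{\alpha-1}|g|\|_{\Lup^{r}(w)}\le C\|g\|_{\Lup^{n',1}(\mu)}
\quad\text{and}\quad
\textup{M}\mu\lesssim \textup{M}_\Psi w .
\]
Once such a $\mu$ is found, Corollary \ref{LorenttGradientq} (inequality \eqref{GNSmeasure}) gives $\|g\|_{\Lup^{n',1}(\mu)}\le C\|\mathbf Rf\|_{\Lup^1((\textup{M}\mu)^{1/n'})}$. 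Since $g=\textup{I}_1f$ is not compactly supported, we justify this by the same limiting argument used for Theorem \ref{RieszRieszThm}.

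There is a problem with the weight exponent: this route produces $(\textup{M}\mu)^{1/n'}$, whereas the target has exponent $1-\frac{\alpha}{n}=1/r'$, which is smaller than $1/n'$. So we should instead use the version of Corollary \ref{LorenttGradientq} with general $q$. Take $q=r$ and $\beta$ arbitrary, and apply \eqref{Lorentzq} with $q$ replaced by some $s\le n'$ and $\mu$ chosen so that $(\textup{M}_{n-s(n-1)}\mu)^{1/s}\lesssim(\textup{M}_\Psi w)^{1/r'}$. Concretely, for $\alpha>1$ we choose $\mu$ of the form $\mathrm d\mu=h\,w\,\mathrm dx$, or more likely $\mu=(\textup{M}_\Psi w)^{\theta}\,\mathrm dx$ with $\theta$ fixed by scaling. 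The step $\textup{I}_{\alpha-1}:\Lup^{s,1}(\mu)\to\Lup^{r}(w)$ is then a two-weight off-diagonal strong estimate with $s<r$, and such estimates hold under a logarithmic Orlicz bump on $w$. This is exactly where the Cruz-Uribe--Moen bump condition \eqref{offdiagbump} with $\bar\Psi\in\textup B_{r',s'}$ enters: the bump $\log^{\frac{r}{s'}+\varepsilon}$ will compute to $\frac1{n-\alpha}+\varepsilon$. The Lorentz $\Lup^{s,1}$ input, as opposed to $\Lup^s$, is what allows the endpoint here.

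The main obstacle is closing these exponents consistently and verifying the two-weight bump for the chosen $\mu$. What we need is that the pair $\bigl(w,\,(\textup{M}_\Psi w)^{\theta}\bigr)$ satisfies \eqref{offdiagbump}. For this we bound $\fint_Q (\textup{M}_\Psi w)^{-\theta(s'-1)}$ by $(\inf_Q \textup{M}_\Psi w)^{-\theta(s'-1)}\le\|w\|_{\Ldash^\Psi(Q)}^{-\theta(s'-1)}$, and we check that the powers of $\ell(Q)$ cancel. Scaling fixes all exponents, and the remaining task is to confirm that $\Psi(t)=t\log(\mathrm e+t)^{\frac1{n-\alpha}+\varepsilon}$, once raised to the appropriate power, has its associate in the required $\textup B$ class.

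Finally, if $w\in\A_1$, then $\textup{M}_\Psi w\lesssim \textup{M}^{k}w\lesssim w$ for the relevant power bump, using the reverse H\"older property of $\A_1$ weights. This yields \eqref{al>1A1}.
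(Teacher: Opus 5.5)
Your overall architecture matches the paper's: factor $\textup I_\alpha=\textup I_{\alpha-1}\circ\textup I_1$, bound $\textup I_{\alpha-1}$ strongly into $\Lup^{\frac{n}{n-\alpha}}(w)$ from a weighted $\Lup^{n'}$ space whose weight is a power of $\textup M_\Psi w$, then apply Corollary~\ref{LorenttGradientq} to $\textup I_1 f$. As written, however, the argument does not close, because you misdiagnose an exponent problem.

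\textbf{The exponent mismatch.} The mismatch you found in the $q=n'$ route comes only from requiring $\textup M\mu\lesssim\textup M_\Psi w$. The right choice is $s=n'$ and $\mathrm d\mu=(\textup M_\Psi w)^{\theta}\,\mathrm dx$ with $\theta=\frac{n-\alpha}{n-1}<1$. A power less than one of $\textup M_\Psi w$ is an $\A_1$ weight; this is the Orlicz analogue of Theorem~\ref{MalA1}, which the paper invokes. Hence $\textup M\mu\lesssim(\textup M_\Psi w)^{\theta}$, and so $(\textup M\mu)^{1/n'}\lesssim(\textup M_\Psi w)^{\theta\frac{n-1}{n}}=(\textup M_\Psi w)^{1-\frac{\alpha}{n}}$. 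This $\A_1$ step is the key idea missing from your proposal. You state the needed pointwise bound on $\textup M\mu$ (or on $\textup M_{n-s(n-1)}\mu$) as a requirement, but never say how it would be obtained. Note also that $1-\frac{\alpha}{n}=\frac1r$, not $\frac1{r'}$.

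\textbf{The detour to $s<n'$ cannot work.} For $s<n'$ the index $n-s(n-1)$ is positive. Taking $w\equiv 1$, your $\mu$ is a constant multiple of Lebesgue measure, so $\textup M_{n-s(n-1)}\mu\equiv\infty$ while $\textup M_\Psi w$ is constant. Likewise, the case $w\equiv1$ of $\textup I_{\alpha-1}:\Lup^{s,1}(\mu)\to\Lup^{\frac{n}{n-\alpha}}(w)$ forces, by dilation, $\frac1s=\frac{n-\alpha}{n}+\frac{\alpha-1}{n}=\frac1{n'}$. Your own bump computation $\frac{r}{s'}=\frac{1}{n-\alpha}$ also holds only when $s'=n$. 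So ``scaling fixes all exponents'' means exactly $s=n'$, which is the route you abandoned.

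\textbf{The two-weight step.} The bound \eqref{offdiagbump} as stated yields only a weak-type estimate into $\Lup^{q,\infty}(w)$. For the strong bound you need the two-sided Lemma~\ref{strongbumpI}. Your $\inf_Q$ trick makes the bump on $v^{-1/p}$ free, so this is precisely Theorem~\ref{generalmax} with $\beta=\alpha-1$, $p=n'$, $q=\frac{n}{n-\alpha}$. For these values $n-\frac{q}{p}(n-\beta p)=0$, $\frac{p}{q}=\frac{n-\alpha}{n-1}$, and $\frac{q}{p'}=\frac1{n-\alpha}$. No Lorentz refinement is needed at this step, since $p=n'>1$; the endpoint difficulty lives entirely in the $\textup I_1$ step. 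With these corrections your argument coincides with the paper's, and your treatment of the $\A_1$ special case is fine.
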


\begin{remark}
The general form of Theorem~\ref{al>1endpt} remains open. More precisely, it is unknown whether the estimate
\begin{equation}\label{lorentzconj}
\|\textup{I}_\alpha f\|_{\Lup^{\frac{n}{n-\alpha},1}(\mu)}
\leq
C\,\|\mathbf R f\|_{\Lup^1\big((\textup M\mu)^{1-\frac{\alpha}{n}}\big)},
\qquad f\in \Con_c^\infty(\R^n),
\end{equation}
holds when $\al\neq1$, even if the right-hand side is replaced by the corresponding Lebesgue space norm.

Inequality \eqref{lorentzconj} is particularly interesting for $0<\alpha<1$, since in this case it is formally equivalent to the fractional Sobolev inequality
\begin{equation}\label{fracSobconj}
\|u\|_{\Lup^{\frac{n}{n-\alpha},1}(\mu)} \le C\,\|\nabla^\alpha u\|_{\Lup^1\big((\textup M\mu)^{1-\frac{\alpha}{n}}\big)},
\end{equation}
where $\nabla^\alpha := \nabla \textup{I}_{1-\alpha}$ is the Riesz fractional gradient. Such an inequality would follow from \eqref{conject} if it were valid. Moreover, \eqref{lorentzconj} would imply \eqref{al>1A1} for $\A_1$ weights, and the full range $0<\alpha<n$. This may be viewed as an $\A_1$-weighted analogue of Spector’s inequality \eqref{spectorresult}.
\end{remark}

We end {this section} with a few observations about the fractional case with index $0<s<1$. To place our results in context, we first discuss fractional derivatives, beginning with the classical fractional Laplacian. This operator may be viewed as the inverse of $\textup I_s$ and is defined via the Fourier transform by 
\begin{equation*}\label{fracderiv} 
[(-\Delta)^{\frac{s}{2}}u]\hat{\ }(\xi)=|2\pi \xi|^s \hat{u}(\xi),\qquad u\in \Con_c^\infty(\R^n). 
\end{equation*} 

It is important to note that the fractional Sobolev inequality 
$$ \|u\|_{\Lup^1(\mu)}\leq C\|(-\Delta)^{\frac{s}2}u\|_{\Lup^1(\textup M_s\mu)} $$ 
does not hold, as it would imply the failed inequality \eqref{falseendptIal}. When $0<s<1$ (in fact, for $0<s<2$, though this is not relevant here), the fractional Laplacian $(-\Delta)^{\frac{s}{2}}u$ is a constant multiple of the singular integral 
$$\textup{p.v.}\int_{\R^n}\frac{u(y)-u(x)}{|x-y|^{n+s}}\,\mathrm{d}y. $$ 
Similarly, the integral representation for the Riesz fractional gradient $\nabla^su= \nabla \textup I_{1-s}u$ is given, up to a constant multiple, by
\begin{equation*}
\int_{\R^n}\frac{u(x)-u(y)}{|x-y|^{n+s}}\frac{x-y}{|x-y|}\,\mathrm{d}y.
\end{equation*} 
The relationship between $\nabla^s$ and $(-\Delta)^{\frac{s}{2}}$ is analogous to that between the classical gradient $\nabla$ and $(-\Delta)^{\frac12}$; this connection can be made precise via Riesz transforms and Riesz potentials:
$$\mathbf Rf=-\nabla^s \textup I_{s}f=-\textup{I}_{s}\nabla^s f \quad \text{and} \quad \mathbf R(-\Delta)^{\frac{s}{2}}u=\nabla^s u.$$ 
For more on these fractional derivatives, we refer the reader to the book by Ponce \cite{Ponce} and the papers by Comi and Stefani \cite{CSI,CSII}.

The fractional Sobolev inequality
\[
\|u\|_{\Lup^1(\mu)} \le C \|\nabla^s u\|_{\Lup^1(\textup M_s \mu)},
\]
for $0<s<1$, is formally equivalent to \eqref{conject} and is therefore false for general measures. Instead, we establish a substitute inequality involving a larger fractional derivative.

When $0 < s < 1$, the integral representations of $\nabla^s$ and $(-\Delta)^{\frac{s}{2}}$ converge absolutely for smooth functions. This allows us to define the nonlinear, positive fractional derivative
\[
\D^s u(x) = \int_{\R^n} \frac{|u(x) - u(y)|}{|x-y|^{n+s}}\,\mathrm{d}y,
\]
which pointwise dominates both of the standard fractional derivatives. More precisely,
\begin{equation*}
|(-\Delta)^{\frac{s}{2}}u(x)|+|\nabla^{s}u(x)| \le c\,\D^s u(x).
\end{equation*}

The positive derivative operator $\D^s$, introduced by Spector in \cite{Spector}, has in fact appeared implicitly in the theory of fractional Sobolev spaces for some time. In particular, the Gagliardo seminorm can be represented as
\[
[u]_{\textup W^{s,1}(\R^n)}=\|\D^s u\|_{\Lup^1(\R^n)}.
\]

Another reason why $\mathcal D^s$ is of interest is that it yields a refinement of \eqref{subrepresentation}, as shown in \cite[Lemma 2.6]{HMP1}:
\begin{equation}\label{fracgrad}
|u(x)|
\leq
(1-s)c_{n,s}\,\textup I_s(\mathcal D^s u)(x)
\leq
c_n \,\textup I_1(|\nabla u|)(x),
\qquad 0<s<1.
\end{equation}
We can further improve \eqref{fracgrad} using the Riesz fractional gradient $\nabla^s$. Indeed, using the classical Riesz transform identity $\mathbf R \cdot \mathbf R = -\mathbf I$, we see
\[
u
=
- \mathbf R \cdot \mathbf R u
=
\mathbf R \cdot \nabla \textup I_1 u
=
\textup I_s \mathbf R \cdot \nabla \textup I_{1-s} u
=
\textup I_s \mathbf R \cdot \nabla^s u,
\]
where we have used the convolution identity $\textup I_1 = \textup I_{1-s} \circ \textup I_s$ for $0<s<1$, together with the fact that the operators involved—being either derivatives or convolution operators—commute whenever $u \in \Con_c^\infty(\mathbb R^n)$. Computing the kernel of $\mathbf R\textup I_s=-\nabla \textup I_{1+s}$ yields the representation formula for $u\in \Con_c^\infty(\R^n)$:
\begin{equation}\label{fracrep}
u(x)=({\mathbf R}\textup I_s \cdot \nabla^s u)(x)
=c_{n,s}
\int_{\R^n}\frac{x-y}{|x-y|^{n-s+1}}\cdot(\nabla^s u)(y)\,\mathrm{d}y
\end{equation}
(see also \cite[Proposition 15.8, p.~246]{Ponce} for a rigorous proof of equality \eqref{fracrep}). The integral converges absolutely, and in particular we obtain the following subrepresentation formula:
\begin{equation}\label{subrieszfrac}
|u(x)|\leq \gamma_{n,s}\textup I_s(|\nabla^s u|)(x).
\end{equation}

By computing the constants explicitly, it can be seen that
\[
\gamma_{n,s}\textup I_s(|\nabla^s u|)(x)
\leq (1-s)\delta_{n,s}\textup I_s(\D^s u)(x),
\]
where $\delta_{n,s}\leq c_n$ as $s\ra 1^-$. The factor $(1-s)$ is significant because it appears naturally in the constants defining $\nabla^s$ and is consistent with the Bourgain--Brezis--Mironescu phenomenon \cite{BBM1} and \cite{BBM2}, leading to the connection with $\D^s$. Thus inequality \eqref{subrieszfrac} improves upon \eqref{subrepresentation}, and combining the above estimates with inequality \eqref{fracgrad} yields
\[
|u(x)|
\leq \gamma_{n,s}\textup I_s(|\nabla^s u|)(x)
\leq (1-s)\delta_{n,s} \textup I_s(\D^s u)(x)
\leq c_n\textup I_1(|\nabla u|)(x),
\qquad u\in \Con_c^\infty(\R^n).
\]

In \cite{MPW}, a local fractional Poincaré inequality was established for locally finite Borel measures:
\begin{equation}\label{fracPoin}\int_Q|u-u_Q|\dmu\leq C(1-s)\int_Q\int_Q\frac{|u(x)-u(y)|}{|x-y|^{n+s}}\textup M_s \mu(x)\dy \dx\end{equation}
for $0\leq s<1$ and $u\in \Lip_c(\R^n)$. By taking sufficiently large cubes in \eqref{fracPoin}, we record the following global inequality.

\begin{theorem} Suppose $\mu$ is a locally finite Borel measure and $0<s<1$. Then
$$\int_{\R^n}|u|\dmu\leq C(1-s)\int_{\R^n} \D^su\, \textup{M}_s\mu\dx, \qquad u\in \Lip_c(\R^n).$$
\end{theorem}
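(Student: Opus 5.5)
The plan is to obtain the global inequality directly from the local fractional Poincaré inequality \eqref{fracPoin} of \cite{MPW} by letting the cube exhaust $\R^n$. The key point is that the mean term $u_Q$ becomes negligible, because $u$ has compact support.

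Fix $u\in\Lip_c(\R^n)$ with $\supp u\subseteq B_R(0)$. We may assume $\textup M_s\mu(x_0)<\infty$ for some $x_0$; otherwise $\textup M_s\mu\equiv\infty$ and the right-hand side is infinite unless $u\equiv 0$. For $k\ge R$, let $Q_k=[-k,k]^n$, so that $u$ vanishes outside $Q_k$. Applying \eqref{fracPoin} on $Q_k$ and enlarging the domain of the double integral gives
\[
\int_{Q_k}|u-u_{Q_k}|\dmu\le C(1-s)\int_{\R^n}\D^s u\,\textup M_s\mu\dx .
\]
Here $u_{Q_k}$ is the $\mu$-average; if \cite{MPW} uses the Lebesgue average instead, the same argument works.

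Next, the triangle inequality gives
\[
\int_{Q_k}|u|\dmu\le \int_{Q_k}|u-u_{Q_k}|\dmu+|u_{Q_k}|\,\mu(Q_k).
\]
Since $\int_{Q_k}|u|\dmu=\int_{\R^n}|u|\dmu$ once $k\ge R$, it suffices to show that $|u_{Q_k}|\mu(Q_k)\to0$ along some sequence $k\to\infty$.

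This step is the main obstacle, and the outcome depends on which average is used.

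\textbf{Lebesgue average.} We have $|u_{Q_k}|\le \|u\|_{\Lup^1}(2k)^{-n}$, so we need $\mu(Q_k)=o(k^n)$. Finiteness of $\textup M_s\mu(x_0)$ gives $\mu(Q_k)\le Ck^{n-s}$, and $k^{n-s}/k^n\to0$ because $s>0$. This settles the term.

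\textbf{$\mu$-average.} We have $|u_{Q_k}|\mu(Q_k)=\left|\int u\dmu\right|$, which does not decay. In that case I would instead use the form of \eqref{fracPoin} with $u_Q$ replaced by an arbitrary constant $c$; this is standard, since $\int|u-u_Q|\dmu\le 2\inf_c\int|u-c|\dmu$. Alternatively, I would bound $|u_{Q_k}|$ by $\fint_{Q_k}\fint_{Q_k}|u(x)-u(y)|\dmu(y)\dmu(x)$ and note that this double integral involves only pairs where at least one point lies in $B_R$.

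A cleaner route is to choose $c=0$ and check that the proof in \cite{MPW} tolerates this. Failing that, I would estimate $|u_{Q_k}|\mu(Q_k)$ via
\[
\int_{B_R}\fint_{Q_k\setminus B_{2k/3}}|u(x)-u(y)|\,\mathrm dy\,\mathrm d\mu(x),
\]
using $u(y)=0$ for $y$ far away. This is a small multiple of $\int|u|\dmu$ and can be absorbed on the left, which is finite for compactly supported Lipschitz $u$ and locally finite $\mu$.

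Finally, letting $k\to\infty$ yields the stated inequality with the same constant $C$.
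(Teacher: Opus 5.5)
Your main line, the Lebesgue-average case $u_Q=\fint_Q u\,dx$, is correct and is exactly the paper's unwritten argument ("take sufficiently large cubes in \eqref{fracPoin}"). Enlarging the double integral gives $\int_{\R^n}\mathcal D^s u\,\textup{M}_s\mu\,dx$. Finiteness of $\textup{M}_s\mu(x_0)$ gives $\mu(Q_k)=O(k^{n-s})$, so the mean term is $O(k^{-s})\to 0$. That is the natural reading for general (possibly singular) measures, and the one under which the paper's remark is immediate.

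The fallbacks you offer for a $\mu$-average, however, do not work.

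First, $\int_Q|u-u_Q|\,d\mu\le 2\inf_c\int_Q|u-c|\,d\mu$ only says that $u_Q$ is nearly the \emph{optimal} constant. It does not let you insert an arbitrary $c$: for $u$ constant on $Q$ the right side of \eqref{fracPoin} vanishes while $\int_Q|u-c|\,d\mu>0$. The case $c=0$ is essentially the statement you are trying to prove, so "check that \cite{MPW} tolerates $c=0$" is not a step.

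Second, your absorption term does not have a small coefficient. For $k$ large, $u(y)=0$ on $Q_k\setminus B_{2k/3}$, so $\int_{B_R}\fint_{Q_k\setminus B_{2k/3}}|u(x)-u(y)|\,dy\,d\mu(x)$ equals $\int|u|\,d\mu$ exactly. With coefficient $1$ nothing can be absorbed. The double $\mu$-average you mention is also not controlled by the right-hand side of \eqref{fracPoin}.

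If you want to cover that reading, use instead that for $\operatorname{supp}u\subseteq B_R\subseteq Q$ and every constant $c$,
\[
\int_{Q}|u-c|\,d\mu\ \ge\ \int_{\R^n}|u|\,d\mu+|c|\bigl(\mu(Q\setminus B_R)-\mu(B_R)\bigr).
\]
So the mean term is harmless once $\mu(Q\setminus B_R)\ge\mu(B_R)$. To guarantee this, apply \eqref{fracPoin} to $\mu+\epsilon|x|^{-s}\,dx$. This measure has infinite total mass, and the added piece has $\textup{M}_s$ bounded by $c_{n,s}\,\epsilon$. Then let $\epsilon\to0$, using that $\int_{\R^n}\mathcal D^s u\,dx<\infty$ for $u\in\Lip_c(\R^n)$.
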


Finally, an argument similar to the one provided for the classical gradient in the proof of Corollary \ref{LorenttGradientq} yields the following result.

\begin{corollary} Suppose $\mu$ is a locally finite Borel measure and $0<s<1$, $1\leq q\leq \frac{n}{n-s}$, and $\al=n-q(n-s)$. Then
$$\|u\|_{\Lup^{q,1}(\mu)}\leq C(1-s)\int_{\R^n} \D^su\, (\textup{M}_\al\mu)^{\frac1q}\dx, \qquad u\in \Lip_c(\R^n).$$
\end{corollary}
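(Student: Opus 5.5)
The plan is to imitate the duality argument already sketched for Corollary \ref{LorenttGradientq}, now starting from the global fractional inequality of the preceding theorem instead of \eqref{newMZthm}. The case $q=1$ gives $\al=s$, and the statement is then exactly that theorem, because $\|u\|_{\Lup^{1,1}(\mu)}=\|u\|_{\Lup^1(\mu)}$. So we may assume $1<q\le \frac{n}{n-s}$.

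First, use the duality between $\Lup^{q,1}(\mu)$ and $\Lup^{q',\infty}(\mu)$. It gives, up to a constant depending only on $q$,
\[
\|u\|_{\Lup^{q,1}(\mu)}\approx \sup\Big\{\int_{\R^n}|u|\,g\dmu:\ g\ge0,\ \|g\|_{\Lup^{q',\infty}(\mu)}\le 1\Big\}.
\]
Fix such a $g$ and set $\mathrm d\nu=g\dmu$. The measure $\nu$ is locally finite, because $g\in\Lup^{q',\infty}(\mu)\subseteq \Lup^1_{\mathrm{loc}}(\mu)$ when $q'<\infty$. Applying the previous theorem to $\nu$ gives
\[
\int_{\R^n}|u|\,g\dmu\le C(1-s)\int_{\R^n}\D^s u\,\textup{M}_s\nu\dx.
\]

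Next, bound $\textup M_s\nu$ pointwise. By the Hölder inequality in Lorentz spaces, $\int_B g\dmu\le \|g\|_{\Lup^{q',\infty}(\mu)}\|\mathbf 1_B\|_{\Lup^{q,1}(\mu)}\le c\,\mu(B)^{1/q}$. For a ball $B=B_r(x)$ this yields
\[
r^{s}\fint_{B_r(x)}g\dmu\le c\,\frac{\mu(B_r(x))^{1/q}}{r^{n-s}}=c\Big(\frac{\mu(B_r(x))}{r^{q(n-s)}}\Big)^{1/q}=c\Big(r^{\al}\frac{\mu(B_r(x))}{r^{n}}\Big)^{1/q},
\]
using $q(n-s)=n-\al$. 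Taking the supremum over $r$ gives $\textup M_s\nu\le c\,(\textup M_\al\mu)^{1/q}$, with the normalization of the averages absorbed into $c$. Substituting this bound and taking the supremum over $g$ finishes the proof.

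The main point needing care is the choice of the duality pairing and its constant. I will use the norm on $\Lup^{q',\infty}$ defined through $\sup_E\mu(E)^{-1/q}\int_E g\dmu$, which makes the Hölder step exact. The constraint $q\le\frac{n}{n-s}$ ensures $\al\ge0$, so that $\textup M_\al$ makes sense. No density or approximation issue arises, since everything is stated for $u\in\Lip_c$.
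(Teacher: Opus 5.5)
Your proposal is correct and follows the route the paper indicates, namely the duality argument from the sketch of Corollary \ref{LorenttGradientq}, now started from the preceding global inequality for $\D^s$: apply it to $\mathrm d\nu=g\,\mathrm d\mu$ with $\|g\|_{\Lup^{q',\infty}(\mu)}\le 1$, bound $\textup M_s\nu\le c\,(\textup M_\al\mu)^{1/q}$ by Lorentz--H\"older together with $q(n-s)=n-\al$, and take the supremum over $g$. Your treatment of the case $q=1$ (where $\al=s$ and the statement is that theorem), the local finiteness of $\nu$, and the use of $\|\mathbf 1_B\|_{\Lup^{q,1}(\mu)}\approx\mu(B)^{1/q}$ in the H\"older step are all sound.
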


Again, we remark that these observations follow from the work done in \cite{MPW} and we do not include the proofs.

\subsection{Organization of the Paper}

The paper is organized as follows. In Section~\ref{prelim}, we present the necessary background material, including basic facts about weights, Lorentz spaces, $\text{BV}$ spaces, and Orlicz spaces, as well as a key tool for our analysis: the coarea formula in weighted $\text{BV}$ spaces. Section~\ref{Master} provides a short, self-contained proof of Theorem~\ref{Gradient}, following the approach of Meyers and Ziemer, along with several related consequences. In Section~\ref{twoweight}, we revisit the two-weight Sobolev inequality \eqref{twowSob} and prove Theorems~\ref{optimalppbump}, \ref{bumpreplacep>1}, and \ref{al>1endpt}. Finally, Section~\ref{oneweight} returns to the one-weight Sobolev inequalities \eqref{weightedSob} and establishes several auxiliary results.

\section{Preliminaries}\label{prelim}

In this section, we collect background material needed throughout the paper. We work with positive Borel measures $\mu$ on $\R^n$ satisfying $\mu(K)<\infty$ for every compact set $K$. Since $\R^n$ is $\sigma$-compact, every locally finite Borel measure is a Radon measure. We denote by $\mathcal L$ the Lebesgue measure on $\R^n$.  

\subsection{Hausdorff Measure and Maximal Function Estimates}

We recall the definition of Hausdorff measure. For $s\ge 0$ and $0<\delta\le\infty$,
the \emph{Hausdorff content} of a set $E\subset\R^n$ is defined by
\[
\Haus^s_\delta(E)
:= \inf\Biggl\{\sum_k \ell(Q_k)^s
: E\subseteq\bigcup_k Q_k,\ \ell(Q_k)\le\delta\Biggr\},
\]
where the infimum is taken over all countable coverings of $E$ by cubes $Q_k$ of
side length at most $\delta$. We note that this differs slightly from the standard definition of $\Haus^s$, for example as found in \cite{EG}, in which one allows coverings by arbitrary sets. However, the two definitions are equivalent up to a constant depending only on $n$ and $s$, and we adopt the formulation above. The \emph{Hausdorff measure} of a Borel set $E$ is
then given by

\[
\Haus^s(E):=\sup_{\delta>0}\Haus^s_\delta(E).
\]
With this definition, $\Haus^s$ is an outer measure and a measure when restricted to the Borel $\sigma$-algebra. However, in this paper we will only need $\Haus^s_\infty$, which is an outer measure on $\R^n$. We note that $\Haus^s(E)=0$ if and only if $\Haus^s_\infty(E)=0$ (see \cite[p. 64]{EG}).

\subsection{Dyadic grids}

In what follows, we will need some standard dyadic techniques.  A dyadic grid $\Dy$ in $\R^n$ is a collection of cubes with side lengths $\ell(Q)=2^k$, $k\in\Z$, such that for each $k$ the subcollection
\[
\Dy^k := \{Q\in\Dy : \ell(Q)=2^k\}
\]
forms a partition of $\R^n$, and any two cubes $Q,P\in\Dy$ satisfy $Q\cap P\in\{\varnothing,Q,P\}$.

Of particular importance are the shifted dyadic grids
\begin{equation*}\label{shiftdyadic}
\Dy_t = \{\,2^k([0,1)^n + m + (-1)^k t) : k\in\Z,\ m\in\Z^n\,\},
\qquad t\in\{0,\tfrac13\}^n,
\end{equation*}
which enjoy the well-known $\tfrac13$-trick.

\begin{lemma}\label{13trick}
For any ball $B\subset \R^n$ with radius $r(B)$, there exist $t\in\{0,\tfrac13\}^n$ and a cube $Q_t\in\Dy_t$ such that $B\subset Q_t$ and $\ell(Q_t)\le 12\,r(B)$.
\end{lemma}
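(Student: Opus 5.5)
The plan is to reduce to one dimension, coordinate by coordinate, using the product structure of the grids $\Dy_t$. Fix a ball $B=B_r(x)$ with $x=(x_1,\dots,x_n)$. Since $B\subseteq\prod_{i=1}^n[x_i-r,x_i+r]$, it suffices to find a single scale $k\in\Z$ and, for each coordinate $i$, a choice $t_i\in\{0,\tfrac13\}$ and an integer $m_i$ such that
\[
[x_i-r,x_i+r]\subseteq 2^k\bigl([0,1)+m_i+(-1)^k t_i\bigr).
\]
Then $t=(t_1,\dots,t_n)$ and $m=(m_1,\dots,m_n)$ give the cube $Q_t=2^k([0,1)^n+m+(-1)^k t)\in\Dy_t$ with $B\subset Q_t$. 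The crucial point is that the scale $k$ is common to all coordinates, while each $t_i$ may be chosen independently; this is allowed because $t$ ranges over the full product set $\{0,\tfrac13\}^n$.

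For the scale, I would take $k$ to be the least integer with $2^k>6r$. Minimality gives $2^{k-1}\le 6r$, so $\ell(Q_t)=2^k\le 12r$, which is the required size bound. It also gives the strict inequality $2r<2^k/3$, which is what the one-dimensional step needs.

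The one-dimensional step is as follows. At level $k$, the interval endpoints of the grid $\Dy_0$ form the set $2^k\Z$. Those of $\Dy_{1/3}$ form the set $2^k(\Z+(-1)^k\tfrac13)$. Their union is a periodic set whose consecutive points are separated by gaps of $2^k/3$ and $2\cdot 2^k/3$, so any two distinct points of the union are at distance at least $2^k/3$. The closed interval $J_i=[x_i-r,x_i+r]$ has length $2r<2^k/3$, so it contains at most one point of this union. Such a point belongs to exactly one of the two grids, and I choose $t_i$ to be the other shift. If $J_i$ contains no point of the union, either choice works.

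With this choice, $J_i$ contains no endpoint of the level-$k$ intervals of $\Dy_{t_i}$. Since $J_i$ is connected, it lies strictly between two consecutive endpoints $a<b$. Hence $J_i\subseteq(a,b)\subseteq[a,b)$, which is one of the half-open dyadic intervals of $\Dy_{t_i}$ at level $k$. Taking products over $i$ finishes the proof.

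The only delicate points are bookkeeping. The first is making sure the closed ball fits inside half-open cubes, which is handled by the strict inequality $2r<2^k/3$ together with the closed interval avoiding all endpoints. The second is checking that the sign $(-1)^k$ in the shift does not affect the gap computation. It does not, since $2^k(\Z-\tfrac13)$ is just the reflection of $2^k(\Z+\tfrac13)$ and has the same separation from $2^k\Z$.
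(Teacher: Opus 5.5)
Your proof is correct and complete: taking $k$ with $6r<2^k\le 12r$ makes the union of the level-$k$ endpoint sets of the two one-dimensional grids $2^k/3$-separated, so a closed interval of length $2r<2^k/3$ meets at most one endpoint, and choosing each $t_i$ independently to avoid it yields the product cube. The paper gives no proof of this lemma and simply quotes it as the well-known $\tfrac13$-trick; your coordinatewise argument is the standard one behind it, and, as you note, the factor $(-1)^k$ only serves to make $\Dy_{1/3}$ nested and plays no role in the containment.
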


Given a dyadic grid $\Dy$, we define the associated dyadic fractional maximal operator for $0\le\alpha<n$ by
\[
\textup{M}_\alpha^\Dy \mu(x)
:= \sup_{Q\in\Dy} \ell(Q)^\alpha \fint_Q \dmu \,\mathbf 1_Q(x).
\]
As a consequence of Lemma~\ref{13trick}, the fractional maximal operator can be controlled pointwise everywhere by finitely many dyadic analogues.

\begin{lemma}\label{dyadicbd}
Let $\mu$ be a locally finite Borel measure and $0\le\alpha<n$. Then
\[
(\textup{M}_\alpha \mu)(x)
\le c_{n,\alpha}\sum_{t\in\{0,\frac13\}^n} \textup{M}_\alpha^{\Dy_t}\mu(x),
\qquad x\in\R^n.
\]
\end{lemma}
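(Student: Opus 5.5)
Fix $x\in\R^n$. We may assume $(\textup{M}_\alpha\mu)(x)>0$; otherwise there is nothing to prove. Fix $\lambda$ with $0<\lambda<(\textup{M}_\alpha\mu)(x)$. By definition of $\textup{M}_\alpha$ there is a radius $r>0$ such that
\[
r^\alpha\fint_{B_r(x)}\dmu=\frac{\mu(B_r(x))}{v_n r^{n-\alpha}}>\lambda .
\]
We now pass to a dyadic cube using Lemma~\ref{13trick}. It gives some $t\in\{0,\tfrac13\}^n$ and a cube $Q_t\in\Dy_t$ with $B_r(x)\subset Q_t$ and $\ell(Q_t)\le 12r$. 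We also want a lower bound on the side length, so that the normalization stays comparable. If $\ell(Q_t)<r/2$, we replace $Q_t$ by its dyadic ancestor of side length between $r$ and $2r$ (or larger, but at most $24r$). Ancestors still contain the ball, so this keeps the containment $B_r(x)\subset Q_t$. We may therefore assume $r\lesssim\ell(Q_t)\le 24r$, up to harmless adjustments of the constant. In fact $\ell(Q_t)\ge 2r$ holds automatically, since the cube contains a ball of diameter $2r$.

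Since $x\in B_r(x)\subset Q_t$, we can estimate the dyadic average from below:
\[
\textup{M}_\alpha^{\Dy_t}\mu(x)\ \ge\ \ell(Q_t)^{\alpha}\frac{\mu(Q_t)}{\ell(Q_t)^n}
\ \ge\ \frac{\mu(B_r(x))}{\ell(Q_t)^{n-\alpha}}
\ \ge\ \frac{\mu(B_r(x))}{(12r)^{n-\alpha}}
\ =\ \frac{v_n}{12^{n-\alpha}}\,r^\alpha\fint_{B_r(x)}\dmu
\ >\ \frac{v_n}{12^{n-\alpha}}\lambda .
\]
The second inequality uses $\mu(Q_t)\ge\mu(B_r(x))$. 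The third uses only the upper bound $\ell(Q_t)\le 12r$, together with the fact that $n-\alpha>0$. So the lower bound on $\ell(Q_t)$ was not needed after all, and the ancestor step can be dropped. This is exactly why the restriction $0\le\alpha<n$ matters: it makes the exponent $n-\alpha$ positive.

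Bounding the single term by the full sum of the nonnegative dyadic maximal functions, we get
\[
\lambda< 12^{n-\alpha}v_n^{-1}\sum_{t\in\{0,\frac13\}^n}\textup{M}_\alpha^{\Dy_t}\mu(x).
\]
Letting $\lambda\uparrow(\textup{M}_\alpha\mu)(x)$ gives the claim with $c_{n,\alpha}=12^{n-\alpha}/v_n$. The inequality holds everywhere, including at points where either side is infinite.

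There is no real obstacle here. The only point requiring care is keeping track of which normalization is used for balls (volume $v_nr^n$) versus cubes ($\ell^n$). The key observation is that only the upper bound $\ell(Q_t)\le 12r$ from Lemma~\ref{13trick} is used, and it suffices because $n-\alpha>0$.
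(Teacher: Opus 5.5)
Your proof is correct and follows the same route as the paper, which simply records the lemma as a direct consequence of the one-third trick (Lemma~\ref{13trick}): enclose $B_r(x)$ in a cube $Q_t\in\Dy_t$ with $\ell(Q_t)\le 12r$ and use $n-\alpha>0$, giving $c_{n,\alpha}=12^{n-\alpha}/v_n$. The detour through dyadic ancestors is unnecessary, as you note yourself, and can be deleted.
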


\subsection{$\A_1$ Weights Generated by Maximal Functions}

A \emph{weight} $w$ is a non-negative locally integrable function on $\R^n$. An $\A_1$ weight is a non-zero weight satisfying
\begin{equation}\label{A1}
(\textup{M}w)(x)\le c\,w(x), \qquad \forall x\in \R^n.
\end{equation}
This definition is slightly stronger than the standard definition of $\A_1$, which typically requires inequality~\eqref{A1} to hold only $\mathcal L$-almost everywhere. For the theory of weighted $\BV$ spaces, however, the weights must be defined everywhere, be lower semicontinuous, and satisfy condition~\eqref{A1} pointwise. In the present context, these requirements are automatically fulfilled. Indeed, if $\mu$ is a measure such that $(\mathrm{M}_1\mu)(x_0)<\infty$ for some $x_0\in\R^n$, then $w=\mathrm{M}_1\mu$ defines a weight with the desired properties, and most of the results relevant to this paper apply in this setting. In fact, we have the following theorem, originally due to Sawyer \cite{Saw81two} in the weighted case and later established in full generality by Hurri-Syrj\"anen, Mart\'inez-Perales, Pérez, and V\"ah\"akangas \cite[Lemma~3.6]{HMPV}.

\begin{theorem}\label{MalA1}
Suppose $0\le \alpha<n$ and $\mu$ is a locally finite Borel measure. If there exists $x_0\in \R^n$ such that $(\textup{M}_\alpha\mu)(x_0)<\infty$, then $\textup{M}_\alpha\mu$ is defined for all $x\in \R^n$ (possibly taking the value $+\infty$ on a set of $\Haus^{n-\al}$ measure zero), is lower semicontinuous, and satisfies
\[
(\textup{M}_\alpha\mu)^r \in \A_1,
\quad \text{for all } 0\le r<\frac{n}{n-\alpha}.
\]
\end{theorem}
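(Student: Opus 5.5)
The plan is to prove the three assertions of Theorem~\ref{MalA1} in order: finiteness, lower semicontinuity, and then the $\A_1$ property, which carries most of the work. I will use the centered ball maximal function $\textup{M}_\alpha\mu(x)=\sup_{r>0} r^\alpha\mu(B_r(x))/\Leb(B_r(x))$ with open balls; other normalizations are pointwise comparable up to dimensional constants.

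\emph{Finiteness.} Since $\textup{M}_\alpha\mu(x_0)<\infty$, Theorem~\ref{Malfinite} gives $\textup{M}_\alpha\mu<\infty$ off a set of $\Haus^{n-\alpha}$-measure zero. For $\alpha<n$, this set is in particular $\Leb$-null when $\alpha=0$; for general $\alpha$, local integrability will come out of the averaging estimate below.

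\emph{Lower semicontinuity.} Suppose $\textup{M}_\alpha\mu(x)>\lambda$, and pick $r$ with $r^\alpha\mu(B_r(x))/\Leb(B_r)>\lambda$. For $|y-x|<\delta$ we have $B_{r+\delta}(y)\supset B_r(x)$, hence
\[
\textup{M}_\alpha\mu(y)\ge (r+\delta)^{\alpha}\frac{\mu(B_r(x))}{\Leb(B_{r+\delta})}.
\]
The right-hand side tends to $r^\alpha\mu(B_r(x))/\Leb(B_r)>\lambda$ as $\delta\to0$. So $\{\textup{M}_\alpha\mu>\lambda\}$ is open.

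\emph{The $\A_1$ inequality.} Fix $0<r<\frac{n}{n-\alpha}$ (the case $r=0$ is trivial), a point $x$, and a ball $B=B_R(z)\ni x$. The goal is
\[
\fint_B(\textup{M}_\alpha\mu)^r\le C\,(\textup{M}_\alpha\mu)(x)^r,
\]
with $C$ depending only on $n,\alpha,r$; this is automatic when the right side is infinite. I would follow the Coifman--Rochberg splitting: write $\mu=\mu_1+\mu_2$ with $\mu_1=\mu|_{3B}$.

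For the far part, take $y\in B$ and a ball $B_s(y)$ meeting $(3B)^c$. Then $s\ge R$, so $B_s(y)\subset B_{3s}(x)$, and hence
\[
s^\alpha\frac{\mu_2(B_s(y))}{\Leb(B_s)}\le 3^{n-\alpha}(3s)^\alpha\frac{\mu(B_{3s}(x))}{\Leb(B_{3s})}.
\]
This gives $\textup{M}_\alpha\mu_2\le C\,\textup{M}_\alpha\mu(x)$ on $B$.

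For the local part, first prove the weak type bound
\[
\Leb(\{\textup{M}_\alpha\nu>t\})\le C\,\bigl(\nu(\R^n)/t\bigr)^{\frac{n}{n-\alpha}}
\]
for finite measures $\nu$. Every point of the level set is the center of a ball with $c\,\Leb(B_j)^{1-\alpha/n}<\nu(B_j)/t$. A Vitali subcover of a compact subset, together with $\sum a_j^{n/(n-\alpha)}\le(\sum a_j)^{n/(n-\alpha)}$ (the exponent is $\ge1$), gives the bound. Kolmogorov's inequality on $B$, using $r<\frac{n}{n-\alpha}$, then yields
\[
\fint_B(\textup{M}_\alpha\mu_1)^r\le C\,\Leb(B)^{-r\frac{n-\alpha}{n}}\mu(3B)^r\approx\Bigl(R^\alpha\frac{\mu(3B)}{\Leb(3B)}\Bigr)^r\le C\,(\textup{M}_\alpha\mu)(x)^r,
\]
where the last step uses $3B\subset B_{4R}(x)$.

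Combining the two parts via $(a+b)^r\le 2^r(a^r+b^r)$ gives the $\A_1$ inequality at every $x$. Applying it with $x=x_0$ on balls around $x_0$ shows $(\textup{M}_\alpha\mu)^r\in\Lup^1_{\textup{loc}}$, so it is a genuine weight, and it is nonzero if $\mu\neq0$. Since the sup over uncentered balls containing $x$ is comparable, the pointwise form~\eqref{A1} follows.

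I expect the main obstacle to be the local-part estimate. It is the only place where $r<\frac{n}{n-\alpha}$ is used, and the weak type bound must be handled for measures rather than functions, which requires the covering argument rather than citing the function case. The far-part comparison is purely geometric bookkeeping with constants depending on $n$ and $\alpha$.
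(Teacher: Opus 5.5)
Your proof is correct. The paper itself does not prove Theorem~\ref{MalA1}; it cites Sawyer \cite{Saw81two} and \cite[Lemma~3.6]{HMPV}. Your argument is a self-contained version of the standard proof used in those sources. It splits $\mu=\mu|_{3B}+\mu|_{(3B)^c}$ in the Coifman--Rochberg way and bounds the far part geometrically by $\textup{M}_\alpha(\mu|_{(3B)^c})\le 3^{n-\alpha}\textup{M}_\alpha\mu(x)$ on $B$. It then controls the local part by applying Kolmogorov's inequality to the weak type $(1,\frac{n}{n-\alpha})$ estimate for $\textup{M}_\alpha$ on finite measures. That is where $r<\frac{n}{n-\alpha}$ enters, and it gives an $\A_1$ constant of order $(1-r\frac{n-\alpha}{n})^{-1}$.

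Your lower semicontinuity step is fine. Invoking Theorem~\ref{Malfinite} for the $\Haus^{n-\alpha}$-null exceptional set is not circular, because the paper proves that result via Theorems~\ref{weakHaus} and~\ref{Malfinitevar} without using Theorem~\ref{MalA1}.
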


\subsection{Lorentz and Orlicz Spaces}

Given a measure $\mu$ on $\R^n$ and $1\le r,s<\infty$, the Lorentz space $\Lup^{r,s}(\mu)$ consists of all measurable functions $f$ such that
\[
\|f\|_{\Lup^{r,s}(\mu)}^s
= r\int_0^\infty t^{s-1}\,
\mu\big(\{x\in \R^n:|f(x)|>t\}\big)^{\frac{s}{r}}\,\mathrm dt<\infty.
\]
When $s=\infty$, the weak-$\Lup^r$ space $\Lup^{r,\infty}(\mu)$ is defined by
\[
\|f\|_{\Lup^{r,\infty}(\mu)}
=\sup_{t>0} t\,\mu\big(\{x\in \R^n:|f(x)|>t\}\big)^{\frac{1}{r}}.
\]
These spaces satisfy the well-known embeddings
\[
\Lup^{r,1}(\mu)\subseteq \Lup^{r,r}(\mu)=\Lup^{r}(\mu)\subseteq \Lup^{r,\infty}(\mu).
\]

For the background on Orlicz spaces, we refer the reader to the book \cite[Chapter 5]{CMP}. A Young function $\Psi$, is a continuous, increasing function satisfying $\Psi(0)=0$ and
\[
\frac{\Psi(t)}{t}\to\infty \qquad \text{as } t\to\infty.
\]
The associate Young function $\bar{\Psi}$ is defined by
\[
\bar{\Psi}(t)=\sup_{s>0}(st-\Psi(s)).
\]
Given a Young function $\Psi$ and a cube $Q$, the normalized Orlicz average is defined by
\[
\|f\|_{\Ldash^\Psi(Q)}
=\inf\left\{\lambda>0:\fint_Q\Psi\Big(\frac{|f(x)|}{\lambda}\Big)\,\dx\le 1\right\}.
\]
When $\Psi(t)=t^a\log(\mathrm e+t)^b$ we write 
$$\|f\|_{\Ldash^\Psi(Q)}=\|f\|_{\Ldash^a(\log \Ldash)^b(Q)}.$$
\subsection{Weighted $\BV$ Spaces and Perimeter}

We now collect several facts concerning weighted $\BV$ spaces; for further details, we refer to \cite{BGKM}. Let $w\in$ A$_1$ be a positive lower semicontinuous function defined on $\R^n$. 
For $u\in \Lup^1_{\text{loc}}(\R^n)$, the weighted variation is defined by
\[
\textup{Var}_w u
=\sup\left\{\int_{\R^n}u\,\Div \boldsymbol{\Phi}\,\dx
:\boldsymbol{\Phi}\in \Lip_c(\R^n,\R^n),\ |\boldsymbol{\Phi}|\le w\right\}.
\]
We say that $u\in \BV(w)$ if $u\in \Lup^1(w)$ and $\textup{Var}_w u<\infty$, and we equip this space with the norm
\[
\|u\|_{\BV(w)}=\|u\|_{\Lup^1(w)}+\textup{Var}_w u.
\]
If $u\in \BV(w)$, then $u\in \BV_{\text{loc}}(\R^n)$ and there exists a finite Borel measure $|Du|_w$ such that
\begin{equation}\label{absolcont}
\mathrm d|Du|_w=w\,\mathrm d|Du|
\end{equation}
where $|Du|$ is the total variation measure arising from $u\in \BV_{\text{loc}}(\R^n)$.

Moreover,
\[
\textup{Var}_w u=|Du|_w(\R^n)
=\int_{\R^n} w\,\mathrm d|Du|.
\]

Given a set $\Omega \subseteq \R^n$, we say that $\Omega$ has finite weighted perimeter if $\mathbf{1}_\Omega \in \BV(w)$.  
In this case, the weighted perimeter (or $w$-perimeter) is defined by
\[
\Per(\Omega,w)
:= \textup{Var}_w(\mathbf{1}_\Omega)
= \sup\left\{
\int_{\Omega} \Div \boldsymbol{\Phi}\dx
:\ \boldsymbol{\Phi} \in \Lip_c(\R^n,\R^n),\ |\boldsymbol{\Phi}(x)| \le w(x), \forall x\in \R^n
\right\}.
\]

When $w \equiv \mathbf 1$, we write $\Per(\Omega)=\Per(\Omega,\mathbf 1)$ for the classical perimeter (see \cite{EG}) and for the associated weighted perimeter, we denote  
\[
|\partial \Omega|_w := |D\mathbf 1_\Omega|_w.
\] 
In general, if $\Omega$ has finite $w$-perimeter, then it has locally finite classical perimeter in $\R^n$; conversely, since a positive lower semicontinuous function is bounded below on compact sets, a bounded set with finite classical perimeter also has finite $w$-perimeter (see \cite{BGKM}). Moreover, since 
\begin{equation}\label{restrtict}
|D\mathbf 1_\Omega| = \mathcal H^{n-1} \resmes \partial^* \Omega,
\end{equation}
where $\partial^*\Omega$ denotes the reduced boundary of $\Omega$, that is, the set of points $x \in \R^n$ at which a well-defined measure-theoretic unit normal exists (which coincides with the topological boundary if $\partial \Omega$ is $\Con^1$), it follows from \eqref{absolcont} and \eqref{restrtict} that
\[
\Per(\Omega,w) = \int_{\partial^*\Omega} w \,\mathrm d\mathcal H^{n-1}.
\]

We will require the following weighted coarea formula; see \cite{Cam} for a proof of this version with general weights.
\begin{lemma}\label{coarea}
Suppose $w$ is a weight and $u\in \BV(w)$ is non-negative. For $t>0$, set $E_t=\{u>t\}$. Then $E_t$ has finite $w$ perimeter for $\Leb$-a.e. $t\in \R$ and
\[
|Du|_w(\R^n)
=\int_0^\infty \Per(E_t,w)\,\mathrm dt
=\int_0^\infty |\partial E_t|_w(\R^n)\,\mathrm dt
=\int_0^\infty \int_{\R^n} w\,\mathrm d|D\mathbf 1_{E_t}|\,\mathrm dt.
\]
In particular, if $u\in \Lip_c(\R^n)$ then
$$\|\nabla u\|_{\Lup^1(w)}=\int_0^\infty\Per(E_t,w)\,\mathrm dt=\int_0^\infty \int_{\R^n} w\,\mathrm d|D\mathbf 1_{E_t}|\,\mathrm dt.$$
\end{lemma}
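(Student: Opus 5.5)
The plan is to deduce the weighted identity from the classical (unweighted) coarea formula of Fleming--Rishel, applied as an identity between Radon measures. We then integrate the weight against both sides. Throughout we work in the setting of this section: $w$ is positive and lower semicontinuous, and $u\in \BV(w)$ is non-negative. In particular $u\in \BV_{\textup{loc}}(\R^n)$, as noted before \eqref{absolcont}.

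\textbf{Step 1: classical coarea as measures.} Since $u\in \BV_{\textup{loc}}(\R^n)$, for almost every $t>0$ the set $E_t$ has locally finite perimeter. Moreover, for every Borel set $A$,
\[
|Du|(A)=\int_0^\infty |D\mathbf 1_{E_t}|(A)\,\mathrm dt .
\]
This is the standard localized coarea formula for $\BV$ (see \cite{EG}). By the usual simple-function and monotone convergence argument, it upgrades to
\[
\int_{\R^n} g\,\mathrm d|Du|=\int_0^\infty\int_{\R^n} g\,\mathrm d|D\mathbf 1_{E_t}|\,\mathrm dt
\qquad\text{for every Borel } g\ge 0 .
\]
Measurability of $t\mapsto \int g\,\mathrm d|D\mathbf 1_{E_t}|$ comes along with this argument via Tonelli.

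\textbf{Step 2: identify the weighted variation with $\int w\,\mathrm d|Dv|$.} We must show that for every $v\in \BV_{\textup{loc}}$,
\[
\textup{Var}_w v=\int_{\R^n} w\,\mathrm d|Dv| ,
\]
without assuming a priori that $v\in \BV(w)$. This is where lower semicontinuity enters, and I expect it to be the main technical point.

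The inequality "$\le$" is immediate. Write $Dv=\sigma|Dv|$ by polar decomposition. Then for $|\boldsymbol\Phi|\le w$,
\[
\int v\,\Div\boldsymbol\Phi\,\mathrm dx=-\int \boldsymbol\Phi\cdot\sigma\,\mathrm d|Dv|\le\int w\,\mathrm d|Dv| .
\]

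For "$\ge$", write $w=\sup_k \varphi_k$ with $\varphi_k$ continuous, compactly supported, $0\le\varphi_k\uparrow w$; this is possible because $w$ is lower semicontinuous. For each fixed $k$, the classical duality gives
\[
\sup\Bigl\{\int v\,\Div\boldsymbol\Phi\,\mathrm dx:\ |\boldsymbol\Phi|\le \varphi_k\Bigr\}=\int\varphi_k\,\mathrm d|Dv| .
\]
To prove this, approximate $-\sigma\varphi_k$ in $\Lup^1(|Dv|)$ by Lipschitz fields, using Lusin's theorem followed by mollification. Truncate the fields to enforce $|\boldsymbol\Phi|\le\varphi_k$, for instance by $\boldsymbol\Phi\mapsto \boldsymbol\Phi\min(1,\varphi_k/|\boldsymbol\Phi|)$ after replacing $\varphi_k$ by a Lipschitz minorant. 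Finally, let $k\to\infty$ by monotone convergence.

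Applying this identity to $v=u$ and to $v=\mathbf 1_{E_t}$, and taking $g=w$ in Step 1, yields
\[
|Du|_w(\R^n)=\int_0^\infty \textup{Var}_w(\mathbf 1_{E_t})\,\mathrm dt .
\]

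\textbf{Step 3: finite $w$-perimeter a.e. and the Lipschitz case.} The left side of the last display is finite, so $\textup{Var}_w(\mathbf 1_{E_t})<\infty$ for a.e. $t$. In addition, Chebyshev's inequality gives
\[
w(E_t)\le t^{-1}\|u\|_{\Lup^1(w)}<\infty ,
\]
so $\mathbf 1_{E_t}\in \BV(w)$ for a.e. $t$. Hence $\Per(E_t,w)=|\partial E_t|_w(\R^n)$, and all the stated equalities follow.

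For $u\in \Lip_c(\R^n)$ we have $|Du|=|\nabla u|\,\mathrm dx$. Therefore $|Du|_w(\R^n)=\|\nabla u\|_{\Lup^1(w)}$, which gives the final formula. Here $u\in \Lup^1(w)$ automatically, since $w$ is locally integrable and $u$ is bounded with compact support.
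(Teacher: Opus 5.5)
The paper gives no proof of this lemma. It quotes it from \cite{Cam}, and the identity \eqref{absolcont} is likewise imported from \cite{BGKM}. Your argument is a correct, self-contained replacement, and it is the natural one. It combines the classical localized coarea formula, read as an identity of Radon measures and integrated against a Borel $g\ge 0$, with the duality $\textup{Var}_w v=\int w\,\mathrm d|Dv|$ for every $v\in\BV_{\textup{loc}}(\R^n)$, whether finite or not.

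The duality step is sound. You may take the $\varphi_k$ Lipschitz from the start, for instance inf-convolutions of $\min(w,k)$ times cutoffs. The radial projection onto the ball of radius $\varphi_k(x)$ is then Lipschitz in $x$. Since $|\sigma|=1$ holds $|Dv|$-a.e., this projection can only decrease the $\Lup^1(|Dv|)$ distance to $-\varphi_k\sigma$; that is worth stating explicitly.

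What your route buys over the citation:
\begin{itemize}
\item it uses only positivity and lower semicontinuity of $w$, with no $\A_1$ condition and no weighted smooth approximation;
\item it tolerates $w=+\infty$ on a null set, as happens for $\textup M_1\mu$;
\item it proves \eqref{absolcont} along the way, including the case $\textup{Var}_w(\mathbf 1_{E_t})=\infty$, which is exactly what makes finiteness of the $w$-perimeter for a.e.\ $t$ automatic.
\end{itemize}

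Your Step 1 alone already gives $\|\nabla u\|_{\Lup^1(w)}=\int_0^\infty\int w\,\mathrm d|D\mathbf 1_{E_t}|\,\mathrm dt$ for $u\in\Lip_c(\R^n)$ and every nonnegative Borel $w$. Lower semicontinuity is needed only to identify the inner integral with $\Per(E_t,w)$ defined by duality.

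Restricting to positive lower semicontinuous $w$ costs nothing. It is the paper's standing hypothesis for $\BV(w)$, and it holds in the applications $w=\textup M_1\mu$ and $w=(\textup M_\alpha\mu)^{1/q}$. Moreover, for a weight that is not lower semicontinuous, the constraint $|\boldsymbol\Phi|\le w$ only sees its lower semicontinuous envelope, so some such hypothesis is implicit in the statement anyway.
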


Finally, we will need the following unweighted relative isoperimetric inequality. The proof follows from the local Poincar\'e inequality; we refer the reader to Evans and Gariepy~\cite[p.~190]{EG}.
\begin{lemma}\label{lowerbdiso}
Suppose $E\subseteq \R^n$ is a bounded set with finite perimeter. Then for any ball $B_r(x)\subseteq \R^n$,
\begin{equation}\label{iso}
\min\{\mathcal L(B_r(x)\cap E),\mathcal L(B_r(x)\cap E^c)\}^{\frac{n-1}{n}}
\le C\,|D\mathbf 1_E|(B_r(x)).
\end{equation}
\end{lemma}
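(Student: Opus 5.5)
The plan is to derive \eqref{iso} from the scale-invariant Sobolev--Poincar\'e inequality for $\BV$ functions on balls, applied to $f=\mathbf 1_E$. Concretely, I will use that for every ball $B=B_r(x)$ and every $f\in\BV(B)$,
\begin{equation*}
\Big(\int_B |f-f_B|^{\frac{n}{n-1}}\dx\Big)^{\frac{n-1}{n}}\le C_n\,|Df|(B),\qquad f_B=\fint_B f\dx,
\end{equation*}
where $C_n$ does not depend on $r$. Here $|D\mathbf 1_E|(B)$ is the total variation of $D\mathbf 1_E$ on the open ball $B$.

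First I establish this inequality for $f\in \Con^\infty(B)\cap \Ws^{1,1}(B)$. On the unit ball this is the standard Sobolev--Poincar\'e inequality: the $\Lup^1$ Poincar\'e inequality combined with the Sobolev embedding $\Ws^{1,1}(B_1)\hookrightarrow \Lup^{n'}(B_1)$. For a general ball I pass by translation and dilation $y\mapsto x+ry$. Both sides scale like $r^{n-1}$, so the constant is independent of $r$.

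Second, I extend to $\BV(B)$ by the Anzellotti--Giaquinta approximation, as in \cite{EG}. This produces $u_k\in \Con^\infty(B)$ with $u_k\to f$ in $\Lup^1(B)$ and $\int_B|\nabla u_k|\to |Df|(B)$. Then $(u_k)_B\to f_B$, and along a subsequence $u_k\to f$ almost everywhere. Fatou's lemma then gives the inequality for $f$. I expect this approximation step to be the only real technical point; it is routine but is where the open ball matters, since the approximation controls the variation inside $B$ only.

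Third comes the computation for $f=\mathbf 1_E$. Since $E$ is bounded with finite perimeter, $\mathbf 1_E\in\BV(B)$. Let $a=f_B=\Leb(B\cap E)/\Leb(B)\in[0,1]$. Then $|f-a|=1-a$ on $B\cap E$ and $|f-a|=a$ on $B\cap E^c$, so
\begin{equation*}
\int_B |f-a|^{n'}\dx=(1-a)^{n'}\Leb(B\cap E)+a^{n'}\Leb(B\cap E^c).
\end{equation*}
If $\Leb(B\cap E)\le \Leb(B\cap E^c)$, then $a\le \tfrac12$, so the first term is at least $2^{-n'}\Leb(B\cap E)$. Otherwise $a>\tfrac12$ and the second term is at least $2^{-n'}\Leb(B\cap E^c)$. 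In either case the left-hand side is at least $2^{-n'}\min\{\Leb(B\cap E),\Leb(B\cap E^c)\}$.

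Raising to the power $1/n'=\frac{n-1}{n}$ and combining with the Sobolev--Poincar\'e inequality yields \eqref{iso} with $C=2C_n$.
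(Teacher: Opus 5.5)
Your proposal is correct and follows essentially the same route as the paper. The paper simply invokes the local Sobolev--Poincar\'e inequality for $\BV$ functions on balls (citing Evans--Gariepy) and applies it to $\mathbf 1_E$. Your smooth-case-plus-approximation step and the computation with $a=\Leb(B\cap E)/\Leb(B)$, yielding the constant $2C_n$, reproduce exactly that Evans--Gariepy argument.
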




\section{Proofs of main results}\label{Master}

\subsection{The extension of the Meyer--Ziemer theorem and its consequences}

We first establish our main result, Theorem~\ref{Gradient}, together with its consequences, Corollaries~\ref{BVextens} and~\ref{isoperimend}. We also prove Theorem~\ref{characterizethm}, which characterizes the endpoint Sobolev inequalities in the range $1<q\leq \frac{n}{n-1}$.

\begin{proof}[Proof of Theorem~\ref{Gradient}]
Let $E\subseteq \R^n$ be a bounded open set of finite perimeter. Since $E$ is open, for every $x\in E$ we have
\[
\lim_{r\to\infty}\frac{\mathcal L(B_r(x)\cap E)}{\mathcal L(B_r(x))}=0,
\qquad
\lim_{r\to 0^+}\frac{\mathcal L(B_r(x)\cap E)}{\mathcal L(B_r(x))}=1.
\]
Consequently, there exists $r_x>0$ such that
\[
\frac{\mathcal L(B_{r_x}(x)\cap E)}{\mathcal L(B_{r_x}(x))}=\frac12,
\qquad\text{that is,}\qquad
\mathcal L(B_{r_x}(x)\cap E)=\frac12\,\mathcal L(B_{r_x}(x)).
\]
It follows that
\[
\mathcal L(B_{r_x}(x)\cap E^c)
=\mathcal L(B_{r_x}(x))-\mathcal L(B_{r_x}(x)\cap E)
=\frac12\,\mathcal L(B_{r_x}(x)).
\]
Applying Lemma~\ref{lowerbdiso}, we obtain for each $x\in E$ the estimate
\[
r_x^{n-1}\le C\,|D\mathbf 1_E|(B_{r_x}(x)).
\]

Now let $u\in \Lip_c(\R^n)$. For each $t>0$, the level set $E_t=\{|u|>t\}$ is open, bounded, and has finite perimeter. Let $K\subseteq E_t$ be compact. Then
\[
K\subseteq \bigcup_{x\in K} B_{r_x}(x).
\]
By compactness and the Vitali covering lemma, there exists a finite disjoint collection of balls $B_j=B_{r_j}(x_j)$, with $x_j\in K$, such that
\[
K\subseteq \bigcup_{j=1}^N 3B_j.
\]
Therefore,
\begin{align*}
\mu(E_t)
&\le \sum_{j=1}^N \mu(3B_j)
= c_n\sum_{j=1}^N \frac{\mu(3B_j)}{r_j^{\,n-1}}\, r_j^{\,n-1} \\
&\le c_n\sum_{j=1}^N \frac{\mu(3B_j)}{r_j^{\,n-1}}\,
|D\mathbf 1_{E_t}|(B_j)
\le c_n\sum_{j=1}^N \int_{B_j} \textup{M}_1\mu\,\mathrm d|D\mathbf 1_{E_t}| \\
&\le c_n\int_{\R^n} \textup{M}_1\mu\,\mathrm d|D\mathbf 1_{E_t}|
= c_n\,|\partial E_t|_{\textup{M}_1\mu}(\R^n),
\end{align*}
where we used that for every $x\in B_j$,
\[
\textup{M}_1\mu(x)\ge c_n\,\frac{\mu(3B_j)}{r_j^{\,n-1}}.
\]

Finally, integrating over $t\in(0,\infty)$ and applying the coarea formula with weight $w=\textup{M}_1\mu$ (Lemma~\eqref{coarea}), we obtain
\[
\int_{\R^n} |u|\,\mathrm d\mu
= \int_0^\infty \mu(E_t)\,\mathrm dt
\le C\int_0^\infty \int_{\R^n} \textup{M}_1\mu\,\mathrm d|D\mathbf 1_{E_t}|\,\mathrm dt
\le C\int_{\R^n} |\nabla u|\,\textup{M}_1\mu\,\dx.
\]
This concludes the proof.
\end{proof}

\begin{proof}[Proof of Corollary~\ref{BVextens}]
Let $u \in \BV(\textup{M}_1 w)$. By Theorem~1.2 in~\cite{BGKM}, there exists a sequence $\{u_k\}\subseteq \Con_c^\infty(\R^n)$ such that $u_k\to u$ $\mathcal L$-a.e. and
\[
\limsup_{k\to\infty} \|\nabla u_k\|_{\Lup^1(\textup{M}_1 w)}
\le C\, |Du|_{\textup{M}_1 w}(\R^n),
\]
where $C$ depends only on the dimension and the $\A_1$ constant of $\textup{M}_1 w$ (see Theorem~\ref{MalA1}).

By Fatou’s lemma—this is precisely where we require $w$ to be a weight, since the convergence holds $\mathcal L$-a.e.—and inequality~\eqref{newMZthm}, we obtain
\[
\|u\|_{\Lup^1(w)}
\le \liminf_{k\to\infty} \|u_k\|_{\Lup^1(w)}
\le C \limsup_{k\to\infty} \|\nabla u_k\|_{\Lup^1(\textup{M}_1 w)}
\le C\, |Du|_{\textup{M}_1 w}(\R^n).
\]
\end{proof}

We now turn to Corollary~\ref{isoperimend}. In contrast to the extension result above, the isoperimetric inequality holds for general locally finite Borel measures $\mu$, provided that $\Omega$ is a bounded open set with finite $\textup{M}_1\mu$-perimeter.

\begin{proof}[Proof of Corollary~\ref{isoperimend}]
Let $\vp$ be a standard mollifier supported in $B_1(0)$, nonnegative, radially decreasing, and normalized so that $\int \vp=1$. Set $\vp_\varepsilon(x)=\varepsilon^{-n}\vp(x/\varepsilon)$ and
\[
u_\varepsilon=\vp_\varepsilon * \mathbf{1}_\Omega.
\]
Since $\Omega$ is open, every $x\in\Omega$ is a point of continuity of $\mathbf{1}_\Omega$, and hence
\[
u_\varepsilon(x)\to 1 \qquad \text{for all } x\in\Omega \quad \text{as } \varepsilon\to 0.
\]
By Fatou’s lemma and inequality~\eqref{newMZthm}, we obtain
\[
\mu(\Omega)
\le \liminf_{\varepsilon\to 0} \int_{\R^n} u_\varepsilon\,\dmu
\le C \liminf_{\varepsilon\to 0} \int_{\R^n} |\nabla u_\varepsilon|\,\textup{M}_1\mu\,\dx.
\]

Let $w=\textup{M}_1\mu$. Then
\[
\int_{\R^n} |\nabla u_\varepsilon|\, w\,\dx
= \textup{Var}_{w}(u_\varepsilon)
= \sup_{\substack{\boldsymbol{\Phi}\in \Lip_c(\R^n;\R^n)\\ |\boldsymbol{\Phi}|\le w}}
\int_{\R^n} u_\varepsilon \,\Div \boldsymbol{\Phi}\,\dx.
\]
For such $\boldsymbol{\Phi}$, Fubini’s theorem yields
\[
\int_{\R^n} u_\varepsilon \,\Div \boldsymbol{\Phi}\,\dx
= \int_\Omega \Div(\vp_\varepsilon * \boldsymbol{\Phi})\,\dy.
\]
Since $\vp$ is radially decreasing,
\[
|\vp_\varepsilon * \boldsymbol{\Phi}|
\le \vp_\varepsilon * |\boldsymbol{\Phi}|
\le \vp_\varepsilon * w
\le \textup{M}w
\le C w,
\]
where we used that convolution with a radially decreasing mollifier is controlled by the Hardy--Littlewood maximal operator and that $w\in\A_1$. Therefore,
\[
\int_{\R^n} u_\varepsilon \,\Div \boldsymbol{\Phi}\,\dx
\le \textup{Var}_{Cw}(\mathbf{1}_\Omega)
= C\,\Per(\Omega,\textup{M}_1\mu).
\]
Taking the supremum over admissible $\boldsymbol{\Phi}$ completes the proof.
\end{proof}

We conclude this section with the proof of Theorem~\ref{characterizethm}.

\begin{proof}

To prove that (1) implies (2), let $\mu = \delta_{x_0}$ be a point mass at $x_0 \in \R^n$. Then
\[
\|u\|_{\Lup^{q,1}(\delta_{x_0})}=|u(x_0)|,
\]
and
\begin{equation*}
\int_{\R^n}|\nabla u(x)|(\textup{M}_\al\delta_{x_0})(x)^{\frac1q}\dx
=c\int_{\R^n}\frac{|\nabla u(x)|}{|x-x_0|^{n-1}}\dx
=c\,\textup{I}_1(|\nabla u|)(x_0).
\end{equation*}
The implication $(2)\Rightarrow(3)$ follows from inequality~\eqref{weakGNS}. The proof of $(3)\Rightarrow(4)$ is essentially identical to that of Corollary~\ref{isoperimend}. Finally, for $(4)\Rightarrow(1)$, let $E_t=\{|u|>t\}$ for $t>0$, which is open, bounded, and has the necessary finite perimeter, provided $u\in \Lip_c(\R^n)$. By the coarea formula,
\[
\|u\|_{\Lup^{q,1}(\mu)}
=q\int_0^\infty \mu(E_t)^{\frac1q}\dt
\le C\int_0^\infty \Per\big(E_t,(\textup{M}_\al\mu)^{{\frac1q}}\big)\dt
=\int_{\R^n}|\nabla u|(\textup{M}_\al\mu)^{\frac1q}\dx.
\]
\end{proof}


\subsection{Endpoint Inequalities}

We now establish the endpoint inequalities involving operators, starting with Theorem~\ref{RieszRieszThm}, followed by Theorems~\ref{MalMendpt} and~\ref{Hardy}.

\begin{proof}[Proof of Theorem~\ref{RieszRieszThm}]
Let $\mu$ be a locally finite Borel measure and set $w=\textup{M}_1\mu\in\A_1$, assuming $\textup{M}_1\mu$ is not identically infinite. If $f\in \Con_c^\infty(\R^n)$, then $\textup{I}_1 f\in \Con^\infty(\R^n)$.

Let $\vp\in \Con_c^\infty(\R^n)$ satisfy $0\le \vp\le 1$, $\vp\equiv 1$ on $B_1(0)$, and $\vp\equiv 0$ outside $B_2(0)$, and set $\vp_k(x)=\vp(x/k)$. Then $u_k=\vp_k\,\textup{I}_1 f\in \Con_c^\infty(\R^n)$ and
\[
\nabla u_k
=(\textup{I}_1 f)\nabla \vp_k+\vp_k\nabla \textup{I}_1 f
=(\textup{I}_1 f)\nabla \vp_k-\vp_k\mathbf{R}f.
\]
By inequality~\eqref{newMZthm},
\[
\|u_k\|_{\Lup^1(\mu)}
\le C\|\nabla u_k\|_{\Lup^1(w)}
\le C\Big(\|(\textup{I}_1 f)\nabla \vp_k\|_{\Lup^1(w)}+\|\vp_k\mathbf{R}f\|_{\Lup^1(w)}\Big).
\]
The second term is bounded by $\|\mathbf{R}f\|_{\Lup^1(w)}$. For the first term,
\begin{align*}
\|(\textup{I}_1 f)\nabla \vp_k\|_{\Lup^1(w)}
&\le \frac{C}{k}\|\mathbf 1_{B_{2k}(0)\setminus B_k(0)}\textup{I}_1 f\|_{\Lup^1(w)} \\
&\le \frac{C}{k}w(B_{2k}(0))^{1/n}
\|\mathbf 1_{B_{2k}(0)\setminus B_k(0)}\textup{I}_1 f\|_{\Lup^{n',\infty}(w)} \\
&\le C(\textup{M}w(0))^{1/n}
\|\mathbf 1_{B_{2k}(0)\setminus B_k(0)}\textup{I}_1 f\|_{\Lup^{n',\infty}(w)}.
\end{align*}

Now, using the estimate 
\[
\|\textup{I}_1 f\|_{\Lup^{n',\infty}(\mu)} \le c_n \|f\|_{\Lup^1((\textup M\mu)^{\frac{1}{n'}} )},
\]
which is just \eqref{Minkow} for $\beta=1$ and $q=\frac{n}{n-1}$ and the fact that $f\in \Lup^1(w^{\frac{1}{n'}})$ with $w\in\A_1$ we have $\textup{I}_1 f\in \Lup^{n',\infty}(w)$;  thus the final term tends to zero as $k\to\infty$.

Finally, applying Fatou’s lemma and using $\vp_k(x)\textup{I}_1 f(x)\to \textup{I}_1 f(x)$ for all $x$, we obtain
\[
\|\textup{I}_1 f\|_{\Lup^1(\mu)}
\le C\|\mathbf{R}f\|_{\Lup^1(\textup{M}_1\mu)}.
\]
\end{proof}
\begin{proof}[Proof of Theorem~\ref{MalMendpt}]
By Lemma~\ref{dyadicbd}, it suffices to consider a general dyadic operator $\textup{M}_\al^\Dy$ and a function $f\in \Lup^\infty_c(\R^n)$. We use the well-known sparse bounds for $\textup{M}_\al$ (see \cite[Theorem 5.1]{CM}).  

Specifically, there exists a subfamily of cubes $\mathcal S \subseteq \Dy$ such that for each $Q\in \mathcal S$ there is a measurable subset $E_Q \subseteq Q$ with $|E_Q|\ge \frac12|Q|$, the sets $\{E_Q: Q\in \mathcal S\}$ are pairwise disjoint, and
\[
\textup M^\Dy_\al f(x) \le c\sum_{Q\in \mathcal S} \ell(Q)^\al \fint_Q |f| \dx \,\mathbf 1_{E_Q}(x), \quad \forall x\in \R^n,
\]
where $\mathcal S=\mathcal S(f)$ may depend on $f$, but $c$ depends only on the dimension.  

Integrating with respect to $\mu$, we then have
\begin{align*}
\int_{\R^n} \textup M_\al^\Dy f \,\dmu
&\le C\sum_{Q\in \mathcal S} \ell(Q)^\al \fint_Q |f| \dx \, \mu(E_Q) \\
&\le C\sum_{Q\in \mathcal S} \Big(\fint_Q |f| \dx\Big) \Big(\ell(Q)^\al \frac{\mu(Q)}{|Q|}\Big) |E_Q| \\
&\le C\sum_{Q\in \mathcal S} \int_{E_Q} \textup M f(x) \textup M_\al \mu(x) \dx \\
&\le C \int_{\R^n} \textup M f(x) \textup M_\al \mu(x) \dx.
\end{align*}
This completes the proof.
\end{proof}

\begin{proof}[Proof of Theorem~\ref{Hardy}]
Let $w = \textup{M}_1 \mu \in \A_1$.  For such weights, the Riesz transform and atomic characterizations of the weighted Hardy space \(\textup{H}^1(w)\) are equivalent. Consequently, it suffices to verify boundedness on atoms.  Specifically, let \(a \in L^\infty_c(\R^n)\) be supported on a ball \(B = B_R(x_0)\) and satisfy
\begin{equation}\label{atomsize}
\int_B a \,\mathrm{d}x = 0, \qquad \|a\|_{\Lup^\infty(\R^n)} \le w(B)^{-1}.
\end{equation}
We will show that
\[
\|\textup{I}_\alpha a\|_{\Lup^1(w)} \le C,
\]
where \(C\) is independent of the atom \(a\).  

We split the integral of $|\textup I_\al a|$ over $\R^n$ into a local and a global part:
\[
\int_{\R^n} |\textup I_\al a| \,\dmu = \int_{3B} |\textup I_\al a| \,\dmu + \int_{(3B)^c} |\textup I_\al a| \,\dmu.
\]

\medskip
Using the cancellation of $a$ and standard kernel estimates, we estimate the global part as follows:
\begin{align*}
\int_{(3B)^c} |\textup I_\al a| \,\dmu
&= C \int_{(3B)^c} \Big|\int_B a(y) \Big(\frac{1}{|x-y|^{n-\al}} - \frac{1}{|x-x_0|^{n-\al}}\Big) \dy \Big| \dmu(x) \\
&\le C \int_{(3B)^c} \int_B |a(y)| \frac{|y-x_0|}{|x-x_0|^{n-\al+1}} \dy \, \dmu(x) \\
&\le C \int_B |a(y)| \int_{(3B)^c} \frac{|y-x_0|}{|x-y|^{n-\al+1}} \dmu(x) \, \dy \\
&\le C \int_B |a(y)| \textup{M}_\al \mu(y) \dy.
\end{align*}
By \eqref{atomsize}, this yields
\[
\int_{(3B)^c} |\textup I_\al a| \,\dmu \le C.
\]

\medskip
For the local part, let $x\in 3B$,
\[
|\textup I_\al a(x)| \le \frac{1}{w(B)} \int_B \frac{1}{|x-y|^{n-\al}} \dy \le \frac{C}{w(B)} R^\al,
\]
so that
\[
\int_{3B} |\textup I_\al a| \,\dmu \le \frac{C R^\al \mu(3B)}{w(B)} \le C \frac{1}{w(B)} \int_B \textup M_\al \mu \, \dx \le C.
\]

Combining the local and global estimates completes the proof.
\end{proof}


\subsection{The Domain of the Fractional Maximal Operator}

Next, we prove Theorem~\ref{Malfinite}, which characterizes the domain of the fractional maximal operator acting on measures. As a preliminary step, we establish an endpoint estimate for the fractional maximal function in terms of Hausdorff measure. This result will play a key role in the proof of Theorem~\ref{Malfinite} and is of independent interest.

\begin{theorem}\label{weakHaus}
Let $\mu$ be a finite Borel measure on $\R^n$ and let $0 \le \alpha < n$. Then there exists a constant $C>0$ such that for every $\lambda>0$,
\[
\Haus_\infty^{n-\alpha}\bigl(\{x \in \R^n : (\textup{M}_\alpha \mu)(x) > \lambda\}\bigr)
\le \frac{C}{\lambda}\,\mu(\R^n).
\]
In particular,
\[
\Haus_\infty^{n-\alpha}\bigl(\{x \in \R^n : (\textup{M}_\alpha \mu)(x) = \infty\}\bigr) = 0,
\]
and hence $(\textup{M}_\alpha \mu)(x) < \infty$ for $\Haus^{n-\alpha}$-almost every $x \in \R^n$.
\end{theorem}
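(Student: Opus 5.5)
The plan is a covering argument for the Hausdorff content, in the spirit of the weak-type $(1,1)$ proof for $\textup{M}$, with Lebesgue measure replaced by $\Haus^{n-\alpha}_\infty$.

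First reduce to dyadic operators. By Lemma~\ref{dyadicbd}, $\{\textup{M}_\alpha\mu>\lambda\}\subseteq\bigcup_{t}\{\textup{M}_\alpha^{\Dy_t}\mu>\lambda/(c_{n,\alpha}2^n)\}$. Since $\Haus^{n-\alpha}_\infty$ is countably subadditive, it suffices to prove
\[
\Haus^{n-\alpha}_\infty\bigl(\{\textup{M}_\alpha^{\Dy}\mu>\lambda\}\bigr)\le \frac{C}{\lambda}\mu(\R^n)
\]
for a single dyadic grid $\Dy$. Write the quantity in the supremum as $\ell(Q)^\alpha\mu(Q)/\Leb(Q)=\mu(Q)\,\ell(Q)^{\alpha-n}$. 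Then $x\in\{\textup{M}^\Dy_\alpha\mu>\lambda\}$ exactly when some $Q\in\Dy$ containing $x$ satisfies
\[
\ell(Q)^{n-\alpha}<\lambda^{-1}\mu(Q).
\]
Call such cubes \emph{selected}.

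Next, extract maximal cubes. Because $\mu$ is finite and $n-\alpha>0$, every selected cube satisfies $\ell(Q)^{n-\alpha}<\mu(\R^n)/\lambda$. So side lengths of selected cubes are bounded above, and every selected cube lies in a maximal selected cube. The maximal selected cubes $\{Q_j\}$ are pairwise disjoint by the nesting property of $\Dy$, and they cover the level set. By the definition of $\Haus^{n-\alpha}_\infty$ via cube coverings,
\[
\Haus^{n-\alpha}_\infty\bigl(\{\textup{M}^\Dy_\alpha\mu>\lambda\}\bigr)\le\sum_j\ell(Q_j)^{n-\alpha}\le\frac1\lambda\sum_j\mu(Q_j)\le\frac{\mu(\R^n)}{\lambda},
\]
using disjointness in the last step. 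Half-open dyadic cubes are fine here, since up to a dimensional constant one may use closed cubes in the content.

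The ``in particular'' statement follows by letting $\lambda\to\infty$. Then $\Haus^{n-\alpha}_\infty(\{\textup{M}_\alpha\mu=\infty\})=0$, and by the equivalence $\Haus^s(E)=0\iff\Haus^s_\infty(E)=0$ noted in the preliminaries, the same holds for $\Haus^{n-\alpha}$.

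The main obstacle is the existence of maximal cubes, which requires the uniform upper bound on side lengths. This is exactly where finiteness of $\mu$ and $\alpha<n$ enter; for $\alpha=n$ the argument would break. A secondary point is that $\{\textup{M}^\Dy_\alpha\mu>\lambda\}$ is a union of selected cubes, so no measurability issue arises for the outer measure $\Haus^{n-\alpha}_\infty$.
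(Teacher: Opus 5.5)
Your proof is correct and follows the paper's argument essentially step for step. You reduce to a single dyadic grid via Lemma~\ref{dyadicbd}, take the maximal dyadic cubes with $\ell(Q)^{n-\alpha}<\mu(Q)/\lambda$, and sum over these disjoint cubes to bound the Hausdorff content by $\mu(\R^n)/\lambda$. You are somewhat more explicit than the paper about why maximal cubes exist (the uniform bound $\ell(Q)^{n-\alpha}<\mu(\R^n)/\lambda$) and about the subadditivity step in the dyadic reduction, but the route is the same.
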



\begin{proof}
Let $\mu$ be a finite Borel measure. By Lemma~\ref{dyadicbd}, it suffices to prove the estimate for the dyadic fractional maximal operator $\textup{M}_\alpha^{\Dy}$ associated with an arbitrary dyadic grid $\Dy$.

Fix $\lambda>0$ and consider the level set
\[
E_\lambda := \{x\in\R^n : (\textup{M}_\alpha^{\Dy}\mu)(x) > \lambda\}.
\]
Since $\mu(\R^n)<\infty$, we may select a collection of maximal dyadic cubes $\{Q_j\}\subset\Dy$ such that
\[
\ell(Q_j)^\alpha \fint_{Q_j} \dmu > \lambda.
\]
These cubes are pairwise disjoint and satisfy
\[
E_\lambda = \bigsqcup_j Q_j.
\]

For each $Q_j$ we have $\ell(Q_j)^{n-\alpha} < \mu(Q_j)/\lambda$, and therefore
\[
\sum_j \ell(Q_j)^{n-\alpha}
< \frac{1}{\lambda}\sum_j \mu(Q_j)
= \frac{\mu(E_\lambda)}{\lambda}
\le \frac{\mu(\R^n)}{\lambda}.
\]
By the definition of Hausdorff content, this implies
\[
\Haus_\infty^{n-\alpha}(E_\lambda)
\le \frac{\mu(\R^n)}{\lambda},
\]
which completes the proof.
\end{proof}

We now prove a slightly more general characterization, following the original work of 
 Fiorenza and Krbec \cite{FK}. 

\begin{theorem}\label{Malfinitevar}
Let $0\le\alpha<n$ and let $\mu$ be a locally finite Borel measure on $\R^n$. The following statements are equivalent:
\begin{enumerate}
\item $\textup{M}_\alpha\mu$ is finite $\Haus^{n-\alpha}$-almost everywhere;
\item there exists $x_0\in\R^n$ such that $(\textup{M}_\alpha\mu)(x_0)<\infty$;
\item there exists $x_0\in\R^n$ such that
\[
\limsup_{R\to\infty} \frac{\mu(B_R(x_0))}{R^{n-\alpha}} < \infty;
\]
\item there exists a constant $K\ge0$ such that for every $x\in\R^n$,
\[
\limsup_{R\to\infty} \frac{\mu(B_R(x))}{R^{n-\alpha}} = K.
\]
\end{enumerate}
\end{theorem}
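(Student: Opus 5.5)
I will prove the cycle $(1)\Rightarrow(2)\Rightarrow(3)\Rightarrow(4)\Rightarrow(1)$.

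\emph{$(1)\Rightarrow(2)$.} Suppose $\textup{M}_\alpha\mu=\infty$ everywhere. Then the set $\{\textup{M}_\alpha\mu=\infty\}$ is all of $\R^n$, and $\Haus^{n-\alpha}(\R^n)>0$ because $n-\alpha\le n$ (indeed it is infinite). This contradicts (1).

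\emph{$(2)\Rightarrow(3)$.} This is immediate, since $\mu(B_R(x_0))/R^{n-\alpha}\le c_n\,(\textup{M}_\alpha\mu)(x_0)$ for every $R>0$.

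\emph{$(3)\Rightarrow(4)$.} For any $x$ and $R>|x-x_0|$ we have the nestings
\[
B_{R-|x-x_0|}(x_0)\subseteq B_R(x)\subseteq B_{R+|x-x_0|}(x_0).
\]
Since $(R\pm|x-x_0|)/R\to1$ as $R\to\infty$, the $\limsup$ of $\mu(B_R(x))/R^{n-\alpha}$ coincides with that at $x_0$. So the common value $K:=\limsup_{R\to\infty}\mu(B_R(x_0))/R^{n-\alpha}<\infty$ works for every $x$.

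\emph{$(4)\Rightarrow(1)$.} This is the main step, and I will localize and use Theorem~\ref{weakHaus}. Fix $N\in\N$. It suffices to show that $\{\textup{M}_\alpha\mu=\infty\}\cap B_N(0)$ is $\Haus^{n-\alpha}$-null. Split $\mu=\mu_1+\mu_2$ with $\mu_1=\mu\resmes B_{3N}(0)$, which is a finite measure. For $x\in B_N(0)$, the supremum defining $\textup{M}_\alpha\mu(x)$ splits into radii $r<N$ and radii $r\ge N$.

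For $r<N$, the ball $B_r(x)$ lies inside $B_{2N}(0)\subseteq B_{3N}(0)$, so only $\mu_1$ contributes, and these terms are bounded by $\textup{M}_\alpha\mu_1(x)$.

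For $r\ge N$, I use $B_r(x)\subseteq B_{2r}(0)$, which gives
\[
r^{\alpha-n}\mu(B_r(x))\le 2^{n-\alpha}\,\frac{\mu(B_{2r}(0))}{(2r)^{n-\alpha}}.
\]
By (4) applied at $x=0$, the quantity $\mu(B_\rho(0))/\rho^{n-\alpha}$ is bounded for all large $\rho$. For $\rho$ in any compact range $[2N,\rho_0]$ it is also bounded, because $\mu$ is locally finite. Hence it is bounded for all $\rho\ge 2N$, by a constant $A_N$. Consequently
\[
\textup{M}_\alpha\mu(x)\le c\,\textup{M}_\alpha\mu_1(x)+c\,A_N\qquad\text{on } B_N(0).
\]

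Theorem~\ref{weakHaus} applied to the finite measure $\mu_1$ shows that $\{\textup{M}_\alpha\mu_1=\infty\}$ has $\Haus^{n-\alpha}_\infty$ measure zero, and therefore $\Haus^{n-\alpha}$ measure zero. Taking a countable union over $N$ finishes the argument.

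The only delicate points are bookkeeping of the normalization of $\textup{M}_\alpha$ (the averages $\fint$ versus $r^{\alpha-n}\mu$ differ by dimensional constants) and the use of local finiteness for intermediate radii. Theorem~\ref{Malfinite} is then the equivalence $(1)\Leftrightarrow(2)$.
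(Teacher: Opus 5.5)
Your proof is correct and follows essentially the same route as the paper. The paper also proves the cycle of implications, and for $(4)\Rightarrow(1)$ it likewise localizes to $B_N(0)$, applies Theorem~\ref{weakHaus} to the finite restricted measure, and bounds the large-radius contributions uniformly using the growth of $\mu(B_\rho(0))/\rho^{n-\alpha}$. Your small variations are fine: the two-sided ball nesting in $(3)\Rightarrow(4)$, and bounding the intermediate radii by a constant $A_N$ via local finiteness instead of taking $N\ge N_0$ with the bound $2K$. The latter also avoids the degenerate case $K=0$, where the paper's bound $\mu(B_R(0))/R^{n-\alpha}\le 2K$ would need adjusting (e.g.\ to $2K+1$).
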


\begin{proof}[Proof of Theorem~\ref{Malfinitevar}]
The implications (1)$\Rightarrow$(2)$\Rightarrow$(3) are immediate.

Assume (3) holds for some $x_0\in\R^n$ and set
\[
K := \limsup_{R\to\infty} \frac{\mu(B_R(x_0))}{R^{n-\alpha}} < \infty.
\]
For any $x\in\R^n$, writing $d=|x-x_0|$ and noting that $B_R(x)\subset B_{R+d}(x_0)$, we obtain
\[
\limsup_{R\to\infty} \frac{\mu(B_R(x))}{R^{n-\alpha}}
\le \limsup_{R\to\infty} \frac{(R+d)^{n-\alpha}}{R^{n-\alpha}}
\frac{\mu(B_{R+d}(x_0))}{(R+d)^{n-\alpha}}
= K.
\]
Exchanging the roles of $x$ and $x_0$ yields (4).

Finally, assume (4). In particular,
\[
\limsup_{R\to\infty} \frac{\mu(B_R(0))}{R^{n-\alpha}} = K < \infty.
\]
Choose $N_0\in\mathbb{N}$ such that $\mu(B_R(0))/R^{n-\alpha}\le 2K$ for all $R\ge N_0$, and define
\[
E_\infty := \{x\in\R^n : \textup{M}_\alpha\mu(x)=\infty\}.
\]
Since $\R^n=\bigcup_{N=N_0}^\infty B_N(0)$, it suffices to show
$\mathcal H^{n-\alpha}(E_\infty\cap B_N(0))=0$ for each $N\ge N_0$.

Decompose $\mu$ as $\mu_N := \mu\resmes B_{2N}(0)$ and
$\nu_N := \mu\resmes B_{2N}(0)^c$, so that
$\textup{M}_\alpha\mu \le \textup{M}_\alpha\mu_N + \textup{M}_\alpha\nu_N$.
Since $\mu_N$ is a finite measure, Theorem~\ref{weakHaus} yields
\[
\mathcal H^{n-\alpha}(\{x\in B_N(0) : \textup{M}_\alpha\mu_N(x)=\infty\})=0.
\]
For $x\in B_N(0)$, we have $\nu_N(B_r(x))=0$ for $r<N$, while for $r\ge N$,
$B_r(x)\subset B_{2r}(0)$, and hence
\[
\frac{\nu_N(B_r(x))}{r^{n-\alpha}}
\le \frac{\mu(B_{2r}(0))}{r^{n-\alpha}}
\le 2^{n-\alpha+1}K.
\]
Thus $\textup{M}_\alpha\nu_N(x)$ is uniformly bounded on $B_N(0)$, and the conclusion follows.
\end{proof}

\section{Two weight Sobolev inequalities}\label{twoweight}

We now collect the necessary two weight machinery to prove Theorems~\ref{optimalppbump}, \ref{bumpreplacep>1}, and \ref{al>1endpt}. Before doing so, we first establish some general results on two weight Sobolev inequalities, which are of independent interest. 

We begin by discussing the weak–to–strong phenomenon. In particular, the coarea formula allows one to upgrade a weak endpoint Sobolev inequality to a strong one. Indeed, suppose that the following weighted weak Sobolev inequality holds:

\begin{equation}\label{weakgen}
\|u\|_{\Lup^{q,\infty}(w)} \le C \|\nabla u\|_{\Lup^1(v)}
\end{equation}
holds for all $u \in \Lip_c(\R^n)$, some $1 \le q < \infty$ and weights $(w,v)$.  By density of smooth functions, this inequality extends to the weighted space $\BV(v)$ and hence applies to sets of finite $v$-perimeter.  
Taking $u=\mathbf{1}_E$ in \eqref{weakgen} (which requires a limiting procedure similar to that in the proof of Corollary \ref{isoperimend}) for such a set $E$ yields the isoperimetric inequality
\[
w(E)^{\frac1q} \le C\,\Per(E,v).
\]
Now let $E_t=\{x\in\R^n : |u(x)|>t\}$.  
Then, by the coarea formula,
\begin{equation*}
\|u\|_{\Lup^{q,1}(w)}
= q\int_0^\infty w(E_t)^{\frac1q}\,\dt
\le C\int_0^\infty \Per(E_t,v)\,\dt
= C\int_{\R^n} |\nabla u|\,v\,\dx.
\end{equation*}

When $p>1$, the preceding argument no longer applies.  
In this case one may instead use the truncation method due to Maz'ya \cite{Maz} (see also \cite{Haj2}).

\begin{theorem}\label{weakstrong}
Let $1 \le p,q < \infty$, and let $(w,v)$ be a pair of weights such that the weak Sobolev inequality
\begin{equation}\label{weakSob}
\|u\|_{\Lup^{q,\infty}(w)} \le C \|\nabla u\|_{\Lup^{p}(v)}
\end{equation}
holds for all $u \in \Lip_c(\R^n)$.  
Then the weighted Lorentz inequality
\begin{equation}\label{weakimpLorentz}
\|u\|_{\Lup^{q,p}(w)} \le C \|\nabla u\|_{\Lup^{p}(v)}
\end{equation}
also holds for all $u \in \Lip_c(\R^n)$.
\end{theorem}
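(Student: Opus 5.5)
We will use Maz'ya's truncation method, applying \eqref{weakSob} to truncations of $u$ at dyadic levels. Fix $u\in\Lip_c(\R^n)$; replacing $u$ by $|u|$ (still Lipschitz with compact support, and $|\nabla|u||=|\nabla u|$ a.e.) we may assume $u\ge0$. For $k\in\Z$ define
\[
u_k:=\min\{(u-2^k)^+,\,2^k\},
\]
which is Lipschitz with compact support, satisfies $0\le u_k\le 2^k$, and has $\nabla u_k=\nabla u\,\mathbf 1_{A_k}$ a.e., where $A_k=\{2^k<u\le 2^{k+1}\}$. The sets $A_k$ are pairwise disjoint, and $\nabla u=0$ a.e. on $\{u=2^k\}$ level sets intersections, so $\sum_k\int_{A_k}|\nabla u|^pv\dx\le\int_{\R^n}|\nabla u|^pv\dx$.

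The key pointwise observation is that on $\{u>2^{k+1}\}$ we have $u_k=2^k$. Applying \eqref{weakSob} to $u_k$ at height $t=2^{k-1}$ gives
\[
2^{k-1}\,w(\{u>2^{k+1}\})^{\frac1q}\le \|u_k\|_{\Lup^{q,\infty}(w)}\le C\Big(\int_{A_k}|\nabla u|^p v\dx\Big)^{\frac1p}.
\]
Raising to the power $p$ and summing over $k\in\Z$ yields
\[
\sum_{k\in\Z}2^{kp}\,w(\{u>2^{k+1}\})^{\frac pq}\le C\sum_k\int_{A_k}|\nabla u|^pv\dx\le C\|\nabla u\|_{\Lup^p(v)}^p.
\]

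It remains to compare the left side with the Lorentz quasinorm. By definition, $\|u\|_{\Lup^{q,p}(w)}^p=q\int_0^\infty t^{p-1}w(\{u>t\})^{p/q}\dt$; splitting $(0,\infty)$ into dyadic intervals $[2^{k+1},2^{k+2})$ and using monotonicity of the distribution function bounds this by $c_{p,q}\sum_k 2^{kp}w(\{u>2^{k+1}\})^{p/q}$. Combining the two displays gives \eqref{weakimpLorentz}.

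The only delicate points are bookkeeping: checking that $u_k$ is admissible (Lipschitz, compactly supported), that the gradients of the $u_k$ live on disjoint sets so the $p$-th powers sum without loss (this is where the exponent $p$ on the right, rather than a norm, is essential), and the discretization of the Lorentz norm. I expect the main potential obstacle to be only the precise constant dependence in the discretization, which is routine; no use of the structure of $w,v$ is needed.
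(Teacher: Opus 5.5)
Your proposal is correct and follows essentially the same argument as the paper. Both use the dyadic truncations $\min\{(u-2^k)^+,2^k\}$, the inclusion $\{u>2^{k+1}\}\subseteq\{u_k>2^{k-1}\}$, the dyadic discretization of the $\Lup^{q,p}(w)$ quasinorm, and then sum the weak inequality over the disjoint bands $\{2^k<u\le 2^{k+1}\}$.
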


\begin{remark}

\begin{itemize}

\item The truncation method is well known in the theory of Sobolev spaces.  
However, to the best of our knowledge, Theorem~\ref{weakstrong} is new at this level of generality, particularly in the presence of both weights and Lorentz spaces.  (see Koskela and Onninen \cite{KO} for a one-weight version on finite domains)

\item One can verify that the proof remains valid when the density on the left-hand side is replaced by a general locally finite Borel measure $\mu$.  
However, the density on the right-hand side must be absolutely continuous with respect to $\Leb$, a requirement already implicit in the coarea formula \eqref{coarea}.

\item If $p \le q$, then inequality \eqref{weakimpLorentz} implies the corresponding Lebesgue Sobolev inequality
\[
\|u\|_{\Lup^{q}(w)} \le C \|\nabla u\|_{\Lup^{p}(v)},
\]
by the usual embedding of Lorentz spaces, namely $\Lup^{q,p}(w)\subseteq \Lup^{q}(w)$.

\item The subrepresentation formula \eqref{subrepresentation} shows that the weak boundedness
\[
\textup{I}_1 : \Lup^p(v) \to \Lup^{q,\infty}(w)
\]
implies inequality \eqref{weakSob}, which then self-improves to \eqref{weakimpLorentz}. It is natural to ask whether the converse implication holds.
\end{itemize}
\end{remark}

\begin{proof}
Since $u \in \Lip_c(\R^n)$ implies $|u| \in \Lip_c(\R^n)$ and $|\nabla |u|| \le |\nabla u|$, we may assume without loss of generality that $u \ge 0$.  
For each $k \in \Z$, define the truncation
\[
\tau_k u(x) =
\begin{cases}
0, & u(x) \le 2^k, \\
u(x) - 2^k, & 2^k < u(x) \le 2^{k+1}, \\
2^k, & u(x) > 2^{k+1}.
\end{cases}
\]
Since $\Lip_c(\R^n)$ is preserved under truncation, we have $\tau_k u \in \Lip_c(\R^n)$ and
\[
|\nabla \tau_k u| \le |\nabla u|\,\mathbf{1}_{\{2^k < u \le 2^{k+1}\}}.
\]
Moreover,
\[
\{x : u(x) > 2^{k+1}\} \subseteq \{x : \tau_k u(x) > 2^{k-1}\}.
\]

We now discretize the Lorentz norm:
\begin{multline*}
\|u\|_{\Lup^{q,p}(w)}^p
= q\int_0^\infty t^{p-1} w(\{x : u(x) > t\})^{\frac{p}{q}}\,dt \\
\le c_{p,q}\sum_k 2^{(k+1)p} w(\{x : u(x) > 2^{k+1}\})^{\frac{p}{q}}
\le c_{p,q}\sum_k 2^{kp} w(\{x : \tau_k u(x) > 2^{k-1}\})^{\frac{p}{q}}.
\end{multline*}
Applying the weak Sobolev inequality \eqref{weakSob} to $\tau_k u$ and summing over $k$, we obtain
\[
\|u\|_{\Lup^{q,p}(w)}^p
\le C\sum_k \int_{\R^n} |\nabla \tau_k u|^p v\dx 
\le C\sum_k  \int_{\{2^k<u\leq 2^{k+1}\}} |\nabla u|^p v\dx 
\le C\int_{\R^n} |\nabla u|^p v\dx,
\]
which completes the proof.
\end{proof}

As noted in the introduction, two-weight inequalities for the operators \(\textup{I}_\alpha\) and \(\textup{M}_\alpha\) have been extensively studied. Complete characterizations in terms of testing conditions were first obtained for \(\textup{M}_\alpha\) in \cite{Saw82} and subsequently for \(\textup{I}_\alpha\) in \cite{Saw84,Saw88}. For our purposes, we will rely on the following optimal sufficient bump conditions, originally due to P\'erez \cite{Per94} (see also \cite{Per95two}). We will employ the refined version in \cite{CM}, which uses the \(\textup B_{p,q}\) condition (see \eqref{Bpq}) rather than the \(\textup B_p\) condition, providing a sharper result in the off-diagonal case \(p<q\).

\begin{lemma}[Theorem 3.4 \cite{CM}] \label{strongbumpM}
Suppose $1<p\leq q<\infty$, \((w,v)\) is a pair of weights, and \(\Psi\) is a Young function with \(\bar{\Psi}\in \textup B_{p,q}\). If the weights satisfy
\[
\sup_Q \, \ell(Q)^\alpha \, \mathcal{L}(Q)^{\frac{1}{q}-\frac{1}{p}}
\Bigl(\fint_Q w \dx \Bigr)^{\frac{1}{q}}
\|v^{-\frac{1}{p}}\|_{\Ldash^\Psi(Q)} < \infty,
\]
then 
\[
\|\textup{M}_\alpha f\|_{\Lup^q(w)} \le C \|f\|_{\Lup^p(v)}.
\]
\end{lemma}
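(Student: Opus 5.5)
We would prove this by sparse domination followed by a generalized Hölder inequality. First, by Lemma~\ref{dyadicbd}, it suffices to bound $\textup{M}_\alpha^{\Dy}$ for a fixed dyadic grid $\Dy$. We may also take $f\ge 0$ bounded with compact support. As in the proof of Theorem~\ref{MalMendpt}, we use the sparse bound of \cite[Theorem 5.1]{CM}. It gives a family $\mathcal S\subseteq\Dy$ with pairwise disjoint sets $E_Q\subseteq Q$, $|E_Q|\ge\frac12|Q|$, such that
\[
\textup{M}^{\Dy}_\alpha f\le c\sum_{Q\in\mathcal S}\ell(Q)^\alpha\Big(\fint_Q f\dx\Big)\mathbf 1_{E_Q}.
\]
Since the $E_Q$ are disjoint, raising to the power $q$ and integrating against $w$ gives
\[
\int(\textup{M}^{\Dy}_\alpha f)^q w\dx\le C\sum_{Q\in\mathcal S}\Big(\ell(Q)^\alpha\fint_Q f\dx\Big)^q w(Q).
\]

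Next, set $g=fv^{1/p}$. We write $\fint_Q f=\fint_Q g\,v^{-1/p}$ and apply the generalized Hölder inequality $\fint_Q gh\le 2\|g\|_{\Ldash^{\bar\Psi}(Q)}\|h\|_{\Ldash^\Psi(Q)}$. The bump hypothesis, with constant $K$, then gives
\[
\ell(Q)^\alpha\Big(\fint_Q f\Big)w(Q)^{1/q}\le 2K\,|Q|^{1/p}\|g\|_{\Ldash^{\bar\Psi}(Q)}.
\]
Hence $\|\textup{M}^{\Dy}_\alpha f\|_{\Lup^q(w)}^q\lesssim K^q\sum_{Q\in\mathcal S}|Q|^{q/p}\|g\|_{\Ldash^{\bar\Psi}(Q)}^q$. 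To control this sum, we group the cubes by the size of their Orlicz averages: let $\mathcal S_k=\{Q\in\mathcal S:2^k<\|g\|_{\Ldash^{\bar\Psi}(Q)}\le2^{k+1}\}$. Because $q/p\ge1$ and the $E_Q$ are disjoint with $E_Q\subseteq\{\textup{M}_{\bar\Psi}g>2^k\}$ for $Q\in\mathcal S_k$,
\[
\sum_{Q\in\mathcal S_k}|Q|^{q/p}\le\Big(\sum_{Q\in\mathcal S_k}|Q|\Big)^{q/p}\le 2^{q/p}\,|\{\textup{M}_{\bar\Psi}g>2^k\}|^{q/p}.
\]
Summing over $k$ yields $\sum_k 2^{kq}|\{\textup{M}_{\bar\Psi}g>2^k\}|^{q/p}\approx\|\textup{M}_{\bar\Psi}g\|_{\Lup^{p,q}}^q$. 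Here $\textup{M}_{\bar\Psi}$ is the dyadic Orlicz maximal operator, i.e.\ \eqref{OrliczMax} with $\alpha=0$ and $\Theta=\bar\Psi$, restricted to $\Dy$.

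The proof is therefore reduced to the key estimate
\[
\|\textup{M}_{\bar\Psi}g\|_{\Lup^{p,q}(\R^n)}\le C\|g\|_{\Lup^p(\R^n)}\qquad\text{whenever }\bar\Psi\in\textup B_{p,q}.
\]
This gives the claim because $\|g\|_{\Lup^p}=\|f\|_{\Lup^p(v)}$. We expect this estimate to be the main obstacle. When $p=q$ it is exactly P\'erez's theorem that $\textup B_p$ characterizes the $\Lup^p$-boundedness of $\textup{M}_{\bar\Psi}$. For $p<q$ we would adapt his argument. Start from the standard distributional estimate, valid since $\bar\Psi$ is a Young function:
\[
|\{\textup{M}_{\bar\Psi}g>t\}|\le C\int_{\{g>t/2\}}\bar\Psi\Big(\frac{g}{t}\Big)\dx.
\]
Insert it into the $\Lup^{p,q}$ quasi-norm $\int_0^\infty t^{q-1}|\{\textup{M}_{\bar\Psi}g>t\}|^{q/p}\dt$. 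Since $q/p\ge1$, apply Minkowski's integral inequality in the $\Lup^{q/p}(t^{q-1}\dt)$ norm to move the $\dx$-integral outside. This bounds the quasi-norm raised to the power $p$ by
\[
\int g(x)^p\Big(\int_1^\infty\frac{\bar\Psi(s)^{q/p}}{s^{q+1}}\,\mathrm ds\Big)^{p/q}\dx,
\]
after the substitution $s=g(x)/t$. The inner integral is finite precisely by $\textup B_{p,q}$. Combining the steps gives $\|\textup{M}_\alpha f\|_{\Lup^q(w)}\le C\|f\|_{\Lup^p(v)}$.
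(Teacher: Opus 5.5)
The paper gives no proof of this lemma; it imports it from \cite{CM}. Your argument is a correct, self-contained proof along standard lines. It combines the same sparse bound the paper uses for Theorem~\ref{MalMendpt}, the Orlicz--H\"older inequality, and the Lorentz bound $\textup{M}_{\bar\Psi}:\Lup^p\to\Lup^{p,q}$ under $\textup B_{p,q}$. Each step checks out:
\begin{itemize}
\item Disjointness of the $E_Q$ justifies both the passage to $\sum_Q(\ell(Q)^\alpha\fint_Q f\dx)^q w(Q)$ and the bound $\sum_{Q\in\mathcal S_k}|Q|\le 2|\{\textup{M}_{\bar\Psi}g>2^k\}|$.
\item The hypothesis $q/p\ge 1$ is exactly what makes $\sum|Q|^{q/p}\le(\sum|Q|)^{q/p}$ and Minkowski's inequality in $\Lup^{q/p}(t^{q-1}\dt)$ valid.
\item The change of variables produces precisely the $\textup B_{p,q}$ integral.
\end{itemize}

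One detail needs correcting. Without a doubling hypothesis on $\bar\Psi$ you cannot get $\bar\Psi(g/t)$ in the distributional estimate. Normalize $\bar\Psi(1)=1$, so that $\|g\mathbf 1_{\{g\le t/2\}}\|_{\Ldash^{\bar\Psi}(Q)}\le t/2$. The maximal-cube argument then gives
\[
|\{\textup{M}_{\bar\Psi}g>t\}|\le\int_{\{g>t/2\}}\bar\Psi\Big(\frac{2g}{t}\Big)\dx.
\]
This is harmless, because
\[
\int_1^\infty\frac{\bar\Psi(2s)^{q/p}}{s^{q+1}}\,\mathrm ds=2^q\int_2^\infty\frac{\bar\Psi(s)^{q/p}}{s^{q+1}}\,\mathrm ds,
\]
so $\textup B_{p,q}$ is invariant under dilations and your substitution (now $s=2g(x)/t$) goes through unchanged.

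Add one remark to justify writing $f=gv^{-1/p}$ inside the H\"older step. Cubes with $w(Q)=0$ contribute nothing to the sparse sum. On every other cube the bump hypothesis forces $\|v^{-1/p}\|_{\Ldash^\Psi(Q)}<\infty$, hence $v>0$ a.e.\ on $Q$.
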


\begin{lemma}[Theorem 2.1 \cite{Per94}, Theorem 3.6 \cite{CM}] \label{strongbumpI}
Suppose $1<p\leq q< \infty$, \((w,v)\) is a pair of weights, and \((\Phi,\Psi)\) is a pair of Young functions with \(\bar{\Phi}\in \textup B_{q',p'}\) and \(\bar{\Psi}\in \textup B_p\). If the weights satisfy
\begin{equation}\label{twosidebump}
\sup_Q \, \ell(Q)^\beta \, \mathcal{L}(Q)^{\frac{1}{q}-\frac{1}{p}}
\|w^{\frac{1}{q}}\|_{\Ldash^\Phi(Q)} \, 
\|v^{-\frac{1}{p}}\|_{\Ldash^\Psi(Q)} < \infty,
\end{equation}

then
\[
\|\textup{I}_\beta f\|_{\Lup^q(w)} \le C \|f\|_{\Lup^p(v)}.
\]
\end{lemma}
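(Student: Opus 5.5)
The plan is to reduce $\textup{I}_\beta$ to sparse dyadic operators, dualize, and then use Orlicz--H\"older on each cube. This turns the two-weight bound into boundedness of two Orlicz maximal operators, and the hypotheses $\bar\Psi\in\textup B_p$ and $\bar\Phi\in\textup B_{q',p'}$ are exactly what that boundedness requires.

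\emph{Step 1: sparse domination and duality.} As in the proof of Theorem~\ref{MalMendpt}, we use the shifted grids $\Dy_t$ of Lemma~\ref{13trick}. For $f\ge0$ bounded with compact support, $\textup{I}_\beta f$ is pointwise dominated by a finite sum over $t$ of sparse operators
\[
\textup{I}^{\mathcal S}_\beta f=\sum_{Q\in\mathcal S}\ell(Q)^\beta\Big(\fint_Q f\dx\Big)\mathbf 1_Q ,
\]
with $\mathcal S\subseteq\Dy_t$ sparse: pairwise disjoint $E_Q\subseteq Q$ with $|E_Q|\ge\frac12|Q|$. This is the standard sparse bound for $\textup{I}_\beta$; we take it from \cite{CM} like the one for $\textup{M}_\alpha$. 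By duality it suffices to bound, for $g\ge0$,
\[
\sum_{Q\in\mathcal S}\ell(Q)^\beta\fint_Q f\dx\int_Q gw\dx
\quad\text{by}\quad C\|f\|_{\Lup^p(v)}\|g\|_{\Lup^{q'}(w)}.
\]

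\emph{Step 2: Orlicz--H\"older.} Write $f=(fv^{1/p})v^{-1/p}$ and $gw=(gw^{1/q'})w^{1/q}$. The generalized H\"older inequality gives
\[
\fint_Q f\le 2\|fv^{1/p}\|_{\Ldash^{\bar\Psi}(Q)}\|v^{-1/p}\|_{\Ldash^\Psi(Q)},
\qquad
\fint_Q gw\le 2\|gw^{1/q'}\|_{\Ldash^{\bar\Phi}(Q)}\|w^{1/q}\|_{\Ldash^\Phi(Q)}.
\]
Inserting the bump condition \eqref{twosidebump}, the sum is controlled by
\[
C\sum_{Q\in\mathcal S}|Q|^{\frac1p+\frac1{q'}}\,\|F\|_{\Ldash^{\bar\Psi}(Q)}\|G\|_{\Ldash^{\bar\Phi}(Q)},
\qquad F=fv^{1/p},\quad G=gw^{1/q'} .
\]
Split $|Q|^{\frac1p+\frac1{q'}}=|Q|^{1/p}\cdot |Q|^{1/p'}|Q|^{\frac1p-\frac1q}$ and apply H\"older in $\ell^p\times\ell^{p'}$ over $\mathcal S$. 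Since $|Q|\le 2|E_Q|$ and the $E_Q$ are disjoint, we get
\[
\sum_Q|Q|\,\|F\|^p_{\Ldash^{\bar\Psi}(Q)}\le 2\int_{\R^n}(\textup{M}_{\bar\Psi}F)^p\dx .
\]
Similarly, with $\gamma=n(\frac1p-\frac1q)$,
\[
\sum_Q|Q|\big(\ell(Q)^{\gamma}\|G\|_{\Ldash^{\bar\Phi}(Q)}\big)^{p'}\le 2\int_{\R^n}(\textup{M}_{\gamma,\bar\Phi}G)^{p'}\dx .
\]

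\emph{Step 3: maximal bounds.} P\'erez's theorem gives $\textup{M}_{\bar\Psi}:\Lup^p\to\Lup^p$ whenever $\bar\Psi\in\textup B_p$, so the first factor is at most $C\|f\|_{\Lup^p(v)}$. For the second factor we need the off-diagonal Orlicz fractional bound
\[
\textup{M}_{\gamma,\bar\Phi}:\Lup^{q'}\to\Lup^{p'},\qquad \frac1{p'}=\frac1{q'}-\frac{\gamma}{n}.
\]
This is the main obstacle. When $p=q$ we have $\gamma=0$ and it reduces to P\'erez's theorem with $\bar\Phi\in\textup B_{p'}$. When $p<q$, I would first try to prove it by a good-$\lambda$/Calder\'on--Zygmund argument on dyadic cubes, estimating the level sets of $\textup{M}^{\Dy}_{\gamma,\bar\Phi}$. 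Splitting $G$ at height proportional to $\lambda$ turns the $\Lup^{p'}$ norm into an integral whose convergence is precisely $\int_1^\infty\bar\Phi(t)^{p'/q'}t^{-p'-1}\dt<\infty$, i.e.\ $\bar\Phi\in\textup B_{q',p'}$. This is the refinement in \cite{CM}. Combining the three steps and summing over the finitely many grids yields $\|\textup{I}_\beta f\|_{\Lup^q(w)}\le C\|f\|_{\Lup^p(v)}$.
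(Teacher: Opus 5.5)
The paper does not prove this lemma; it quotes it from \cite{Per94} and \cite{CM}. Your argument is correct and is essentially the standard proof from those sources. You use sparse domination and duality, then Orlicz--H\"older on each cube together with the bump condition, which reduces everything to $\sum_{Q\in\mathcal S}|Q|^{\frac1p+\frac1{q'}}\|F\|_{\Ldash^{\bar\Psi}(Q)}\|G\|_{\Ldash^{\bar\Phi}(Q)}$, and finish with H\"older over the sparse family. Placing the whole fractional gain $|Q|^{\frac1p-\frac1q}$ on the $G$ side is exactly what makes the hypotheses come out as stated: $\bar\Psi\in\textup B_p$ for $\textup M_{\bar\Psi}$ on $\Lup^p$, and $\bar\Phi\in\textup B_{q',p'}$ for $\textup M_{\gamma,\bar\Phi}:\Lup^{q'}\to\Lup^{p'}$. (Placing it on the $F$ side and using H\"older in $\ell^q\times\ell^{q'}$ gives the mirror statement, with $\bar\Psi\in\textup B_{p,q}$ and $\bar\Phi\in\textup B_{q'}$.)

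One caveat concerns Step~3. The bound $\textup M_{\gamma,\bar\Phi}:\Lup^{q'}\to\Lup^{p'}$ under $\bar\Phi\in\textup B_{q',p'}$ is the real content of the refinement in \cite{CM}, and your good-$\lambda$ sketch does not establish it. The obvious level-set argument is to normalize $\|G\|_{\Lup^{q'}}=1$, observe that $\{\textup M_{\gamma,\bar\Phi}G>\lambda\}\subseteq\{\textup M_{\bar\Phi}G>c\lambda^{p'/q'}\}$, and apply P\'erez's distributional estimate. That route only yields the stronger requirement $\bar\Phi\in\textup B_{q'}$. So this bound should be quoted from \cite{CM}, as you ultimately do, rather than presented as proved. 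With that citation your proof is complete.
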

\begin{remark}
For the fractional maximal function \(\textup{M}_\alpha\), it suffices to bump only one of the weights. In contrast, for \(\textup{I}_\alpha\), both weights must be bumped, as the example in \cite{CM} demonstrates. In the case \(p<q\), one can separate the bumps for \(\textup{I}_\alpha\) (see \cite{CM}). However, for \(p=q\), it is currently unknown whether the bumps may be separated. This issue is closely related to two-weight weak-type \((p,p)\) inequalities for \(\textup{I}_\alpha\).
\end{remark}

Finally, we state a general theorem, which will be used in the proofs of Theorems \ref{bumpreplacep>1} and \ref{al>1endpt}. This result follows from Lemma \ref{strongbumpI} and appears to be new.

\begin{theorem} \label{generalmax}
Suppose $0<\beta<n$, $1<p<\frac{n}{\beta}$, and $p\leq q\leq \frac{np}{n-\beta p}$. Let $w$ be a weight. Then
\begin{equation}\label{generalIalbd}
\|\textup{I}_\beta f\|_{\Lup^q(w)} \leq C \|f\|_{\Lup^p\big((\textup{M}_{\alpha,\Theta}w)^{\frac{p}{q}}\big)},
\end{equation}
where 
\[
\Theta(t) = t[\log(\mathrm e + t)]^{\frac{q}{p'} + \varepsilon}, 
\quad \alpha = n - \frac{q}{p}(n-\beta p).
\]
Moreover, this inequality is sharp in the sense that it fails if $\varepsilon = 0$.
\end{theorem}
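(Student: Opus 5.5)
We apply Lemma~\ref{strongbumpI} with $v=(\textup{M}_{\alpha,\Theta}w)^{\frac pq}$ and a suitable pair of Young functions $(\Phi,\Psi)$. On the target side take
\[
\Phi(t)=t^q[\log(\mathrm e+t)]^{\frac{q}{p'}+\delta},\qquad \delta\in(0,\varepsilon),
\]
so that $\bar\Phi\in\textup B_{q',p'}$; this is the standard log-bump computation from \cite{CM}. On the source side take any $\Psi$ with $\bar\Psi\in \textup B_p$, for instance $\Psi(t)=t^{p'}[\log(\mathrm e+t)]^{p'-1+\delta}$. The work then lies in verifying the two-bump condition \eqref{twosidebump} for the pair $(w,v)$.

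\textbf{Weight side.} First observe that, for $t\ge 0$,
\[
\|w^{\frac1q}\|_{\Ldash^\Phi(Q)}\simeq \|w\|_{\Ldash(\log \Ldash)^{\frac q{p'}+\delta}(Q)}^{\frac1q}.
\]
This holds because $\Phi(t^{1/q})\simeq t\log(\mathrm e+t)^{\frac q{p'}+\delta}$ up to constants in the logarithm's argument. Since $\delta<\varepsilon$, this is at most $\|w\|_{\Ldash^\Theta(Q)}^{1/q}$.

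\textbf{Source side.} For every $x\in Q$ we have $\textup{M}_{\alpha,\Theta}w(x)\ge \ell(Q)^\alpha\|w\|_{\Ldash^\Theta(Q)}$. Hence $v^{-\frac1p}\le \big(\ell(Q)^\alpha\|w\|_{\Ldash^\Theta(Q)}\big)^{-\frac1q}$ on $Q$, and the Orlicz average of a function bounded by a constant is bounded by that constant times $\|1\|_{\Ldash^\Psi(Q)}\simeq 1$.

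\textbf{Combining.} Substituting both bounds, the quantity in \eqref{twosidebump} is at most
\[
\ell(Q)^{\beta}\mathcal L(Q)^{\frac1q-\frac1p}\,\ell(Q)^{-\frac{\alpha}{q}}
= \ell(Q)^{\beta+\frac nq-\frac np-\frac\alpha q}.
\]
With $\alpha=n-\frac qp(n-\beta p)$ we get $\frac\alpha q=\frac nq-\frac np+\beta$, so the exponent vanishes and the supremum is finite. The constraint $0\le\alpha<n$ corresponds exactly to $p\le q\le \frac{np}{n-\beta p}$. In the endpoint case $q=p$ we have $\alpha=\beta p$, and $\textup B_{q',p'}$ reduces to $\textup B_{p'}$, so the same choices still work. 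Lemma~\ref{strongbumpI} then yields \eqref{generalIalbd}.

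\textbf{Main obstacle: sharpness.} For $\varepsilon=0$ I would adapt the classical counterexample showing that $\textup{I}_\beta$ fails with the unbumped or critically bumped weight, as in \cite{CM} and \cite{CPSS}. Concretely, on the unit cube take $w=\mathbf 1_{E}$-type singular weights, or power-log weights such as $w(x)=|x|^{-n}\log(\mathrm e+1/|x|)^{-\gamma}$ near the origin, and test against $f=\mathbf 1_{B_1}$ (or a dual test function). Then compute both sides explicitly. The left side picks up $\int_{B_1} w\simeq$ a divergent or large logarithmic quantity. The right side involves $\textup{M}_{\alpha,\Theta}w$ with $\Theta(t)=t\log(\mathrm e+t)^{q/p'}$, which is comparable to a computable power of $\log$. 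Tuning $\gamma$ and truncating at scale $r\to0$, the two sides scale as different powers of $\log(1/r)$, and this gives the failure.

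The delicate point is matching the exponent $q/p'$ exactly. I would first try the duality route: failure of \eqref{generalIalbd} is equivalent to failure of $\textup{I}_\beta:\Lup^{q'}(w^{1-q'})\to \Lup^{p'}(v^{1-p'})$. Testing this on $g=w\mathbf 1_{B_r}$ reduces the problem to a one-dimensional integral comparison in the radial variable.
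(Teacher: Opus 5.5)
Your proof of \eqref{generalIalbd} itself follows the paper's argument. You apply Lemma~\ref{strongbumpI} with $v=(\textup{M}_{\alpha,\Theta}w)^{p/q}$ and $\Phi(t)=t^q[\log(\mathrm e+t)]^{q/p'+\varepsilon}$; your $\delta<\varepsilon$ is harmless. You then use the bound $\ell(Q)^\alpha\|w\|_{\Ldash^\Theta(Q)}\le \textup{M}_{\alpha,\Theta}w$ on $Q$. The paper takes the power bump $\Psi(t)=t^{(1+\varepsilon)p'}$ where you take a log bump. Either works, because $v^{-1/p}$ is dominated by a constant on $Q$.

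The gap is the sharpness claim. You give no counterexample, and the mechanism you predict does not occur. At $\varepsilon=0$ there is no mismatch between powers of $\log(1/r)$. Run your own dual test with $w(x)=|x|^{-n}\log(1/|x|)^{-\gamma}$ near $0$ and $g=w\mathbf 1_{B_r}$. You need $\gamma>1+\frac{q}{p'}$; otherwise $\|w\|_{\Ldash^\Theta(Q)}=\infty$ for cubes containing $0$, so $\textup{M}_{\alpha,\Theta}w\equiv\infty$ and the right side is trivially infinite. One finds $\|w\|_{\Ldash^\Theta(B_s)}\approx s^{-n}\log(1/s)^{1-\gamma+q/p'}$, hence $\textup{M}_{\alpha,\Theta}w(x)\approx |x|^{\alpha-n}\log(1/|x|)^{1-\gamma+q/p'}$ for small $|x|$. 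Since $(n-\alpha)\frac{p'}{q}=(n-\beta)p'-n$, the annulus $2r<|x|<\frac14$ contributes $\approx w(B_r)^{p'}\log(1/r)^{(\gamma-1)p'/q}\approx \log(1/r)^{(1-\gamma)p'/q'}$ to $\|\textup{I}_\beta g\|^{p'}_{\Lup^{p'}((\textup{M}_{\alpha,\Theta}w)^{-p'/q})}$. This is exactly the size of $\|g\|^{p'}_{\Lup^{q'}(w^{1-q'})}=w(B_r)^{p'/q'}$. The exponent $\frac{q}{p'}$ in $\Theta$ is precisely the one that makes such power counts balance. The failure at $\varepsilon=0$ is a borderline $\int\frac{\mathrm ds}{s\log s}$ (that is, $\log\log$) divergence coming from large scales, not from a local singularity of $w$.

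Your duality reduction is the right first step, and it is what the paper does. What is missing is the right test. Take $w=\mathbf 1_B$ with $B=B_1(0)$, dualize to $\|\textup{I}_\beta g\|_{\Lup^{p'}((\textup{M}_{\alpha,\Theta}\mathbf 1_B)^{-p'/q})}\le C\|g\|_{\Lup^{q'}(B)}$, and test with $g=\mathbf 1_B$. Then $\textup{I}_\beta\mathbf 1_B(x)\gtrsim (1+|x|)^{\beta-n}$. Since $\|\mathbf 1_E\|_{\Ldash^\Theta(Q)}=1/\Theta^{-1}\big(\mathcal L(Q)/\mathcal L(E\cap Q)\big)$ and $\Theta^{-1}(s)\approx s/[\log(\mathrm e+s)]^{q/p'}$, one gets $\textup{M}_{\alpha,\Theta}\mathbf 1_B(x)\approx(\log|x|)^{q/p'}|x|^{\alpha-n}$ for $|x|>2$. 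Raising this to the power $-p'/q$ turns the logarithm into exactly $(\log|x|)^{-1}$. The integrand is therefore $\approx (|x|^n\log|x|)^{-1}$, so the left side is infinite while the right side is $\mathcal L(B)^{1/q'}$. If your weight near $0$ were compactly supported and you tracked $|x|\to\infty$, this same tail would give the divergence. But your proposal locates the obstruction at the origin and expects a power-of-$\log$ gap that does not exist.
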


\begin{remark} 
The restrictions on $p$ and $q$ guarantee that $0 \leq \alpha < n$.
\end{remark}

\begin{proof} 
By Theorem~\ref{strongbumpI}, it suffices to verify that the pair $(w,v)$, with 
\[
v = (\textup{M}_{\alpha,\Theta} w)^{\frac{p}{q}},
\] 
satisfies condition~\eqref{twosidebump} for suitable Young functions $(\Phi,\Psi)$ such that $\bar\Phi \in \textup B_{q',p'}$ and $\bar\Psi \in \textup B_p$.

Define
\[
\Phi(t) = t^q[\log(\mathrm e + t)]^{\frac{q}{p'} + \varepsilon}.
\]
Then
\[
\bar\Phi(t) \approx \frac{t^{q'}}{[\log(\mathrm e + t)]^{\frac{q'}{p'} + \frac{q'}{q}\varepsilon}} \in \textup B_{q',p'},
\]
and since $\Phi(t^{1/q}) \approx \Theta(t)$,
\[
\|w^{\frac{1}{q}}\|_{\Ldash^\Phi(Q)} \approx \|w\|_{\Ldash^\Theta(Q)}^{\frac{1}{q}}.
\]

Next, let $r = 1 + \varepsilon$ and set $\Psi(t) = t^{r p'}$, so that $\bar\Psi(t) \approx t^{(r p')'} \in \textup B_p$. (The choice of bump here is flexible; a power bump suffices.)

With these choices and $\alpha = n - \frac{q}{p}(n-\beta p)$, we estimate
\begin{multline*} 
\ell(Q)^\beta \mathcal{L}(Q)^{\frac{1}{q}-\frac{1}{p}} 
\|w^{\frac{1}{q}}\|_{\Ldash^\Phi(Q)} \|v^{-\frac{1}{p}}\|_{\Ldash^\Psi(Q)} 
\leq C \big(\ell(Q)^{\alpha} \|w\|_{\Ldash^\Theta(Q)}\big)^{\frac{1}{q}} \|(\textup{M}_{\alpha,\Theta} w)^{-\frac{1}{q}}\|_{\Ldash^\Psi(Q)} \\
\leq C \|(\textup{M}_{\alpha,\Theta} w)^{\frac{1}{q}} (\textup{M}_{\alpha,\Theta} w)^{-\frac{1}{q}}\|_{\Ldash^\Psi(Q)} \leq C,
\end{multline*}
since for every $x \in Q$,
\[
\ell(Q)^{\alpha} \|w\|_{\Ldash^\Theta(Q)} \le (\textup{M}_{\alpha,\Theta} w)(x).
\]
Thus, condition~\eqref{twosidebump} holds, and we obtain
\[
\|\textup{I}_\beta f\|_{\Lup^q(w)} \leq C \|f\|_{\Lup^p\big((\textup{M}_{\alpha,\Theta} w)^{\frac{p}{q}}\big)}.
\]

To see the sharpness, assume by contradiction that \eqref{generalIalbd} holds with $\varepsilon = 0$, i.e.,
\[
\Theta(t) = t[\log(\mathrm e + t)]^{\frac{q}{p'}}.
\]
Let $w = \mathbf{1}_B$, where $B = B_1(0)$ is the unit ball in $\mathbb{R}^n$. Then \eqref{generalIalbd} gives
\[
\|\textup{I}_\beta f\|_{\Lup^q(B)} \leq C \|f\|_{\Lup^p\big((\textup{M}_{\alpha,\Theta}(\mathbf{1}_B))^{\frac{p}{q}}\big)}.
\]
A duality argument shows
\[
\|\textup{I}_\beta g\|_{\Lup^{p'}((\textup{M}_{\alpha,\Theta}(\mathbf{1}_B))^{-\frac{p'}{q}})} \leq C \|g\|_{\Lup^{q'}(B)}.
\]
Taking $g = \mathbf{1}_B$ yields
\[
\|\textup{I}_\beta (\mathbf{1}_B)\|_{\Lup^{p'}((\textup{M}_{\alpha,\Theta}(\mathbf{1}_B))^{-\frac{p'}{q}})} \leq C.
\]
Since $|x-y| \leq |x| + 1$ for $y \in B$, we have
\[
\textup{I}_\beta(\mathbf{1}_B)(x) \ge \frac{v_n}{(1+|x|)^{n-\beta}}.
\]
Moreover, for $|x|>2$,
\[
\textup{M}_{\alpha,\Theta}(\mathbf{1}_B)(x) \approx \frac{(\log|x|)^{\frac{q}{p'}}}{|x|^{n-\alpha}}.
\]
By the definition of $\alpha$,
\[
(n-\alpha)\frac{p'}{q} = (n-\beta)p' - n.
\]

Hence,
\begin{multline*}
\int_{\mathbb{R}^n} \textup{I}_\beta(\mathbf{1}_B)(x)^{p'} \textup{M}_{\alpha,\Theta}(\mathbf{1}_B)(x)^{-\frac{p'}{q}} \dx
\ge C \int_{|x|>2} \frac{|x|^{(n-\beta)p' - n}}{(1+|x|)^{(n-\beta)p'} \log|x|} \dx \\
\ge C \int_{|x|>2} \frac{1}{|x|^n \log|x|} \dx = \infty,
\end{multline*}
which is a contradiction.
\end{proof}

\begin{proof}[Proof of Theorem \ref{optimalppbump}] The proof is a direct consequence of Theorems~\ref{Gradient} and~\ref{strongbumpM}. Let $p>1$ and $u\in \Lip_c(\R^n)$. Using the identification
$(\Lup^p(w))^* = \Lup^{p'}(w^{1-p'}),$ we may write
\[
\|u\|_{\Lup^p(w)}
= \sup\left\{ \int_{\R^n} |u g|\,\dx : \|g\|_{\Lup^{p'}(w^{1-p'})}=1 \right\}.
\]
Fix such a function $g$. By Theorem~\ref{Gradient} and H\"older's inequality,
\[
\int_{\R^n} |u g|\,\dx
\le C \int_{\R^n} |\nabla u|\, \textup{M}_1 g\,\dx
\le C \|\nabla u\|_{\Lup^{p}(v)} \|\textup{M}_1 g\|_{\Lup^{p'}(v^{1-p'})}.
\]
To control the maximal term, we apply Theorem~\ref{strongbumpM} to the fractional maximal operator on $\Lup^{p'}(w^{1-p'})$ with the weight pair $(v^{1-p'},w^{1-p'})$. This yields
\[
\|\textup{M}_1 g\|_{\Lup^{p'}(v^{1-p'})}
\le C \|g\|_{\Lup^{p'}(w^{1-p'})},
\]
provided that
\[
\sup_Q \ell(Q)\left(\fint_Q v^{1-p'}\,\dx\right)^{\frac1{p'}}
\|w^{-(1-p')\frac1{p'}}\|_{\Ldash^\Psi(Q)} < \infty
\]
for some $\Psi$ with $\bar{\Psi}\in \textup B_{p'}$. Since $-(1-p')\frac1{p'}=\frac1p$ this condition is precisely the hypothesis of the theorem. Combining the estimates, we obtain
\[
\int_{\R^n} |u g|\,\dx
\le C \|\nabla u\|_{\Lup^{p}(v)},
\]
which completes the proof.

\end{proof}

\begin{proof}[Proof of Theorem \ref{bumpreplacep>1}] By the subrepresentation formula~\eqref{subrepresentation} it suffices to check
\begin{equation*}\label{I1bdd}
\textup I_1:\Lup^p\big((\textup{M}_{\alpha,\Theta}w)^{\frac{p}{q}}\big)\to \Lup^q(w),
\end{equation*}
where $\Theta(t)=t[\log(\mathrm e+t)]^{\frac{q}{p'}+\varepsilon}$ for $\varepsilon>0$. (Weak-type boundedness would suffice, but this pair of weights satisfies the condition for strong boundedness.) However, this is true by Theorem~\ref{generalmax} with $\beta=1$. Thus 
\[
\|u\|_{\Lup^q(w)}
\le C\|\textup I_1(|\nabla u|)\|_{\Lup^q(w)}
\le C\|\nabla u\|_{\Lup^p\big((\textup{M}_{\alpha,\Theta}w)^{\frac{p}{q}}\big)}.
\]
Thus the desired result holds.  The sharpness, i.e. that we cannot take $\ep=0$, also follows from Theorem~\ref{generalmax}.  Indeed, suppose 
$$\|u\|_{\Lup^q(w)}
\le C\|\nabla u\|_{\Lup^p\big((\textup{M}_{\alpha,\Theta}w)^{\frac{p}{q}}\big)}$$
held with $\Theta(t)=t[\log(\mathrm e+t)]^{\frac{q}{p'}}$.  Then we could take $u=\textup I_1f$ would imply
$$\|\textup I_1f\|_{\Lup^q(w)}
\le C\|\mathbf Rf\|_{\Lup^p\big((\textup{M}_{\alpha,\Theta}w)^{\frac{p}{q}}\big)}\leq C\|f\|_{\Lup^p\big((\textup{M}_{\alpha,\Theta}w)^{\frac{p}{q}}\big)}$$
where we have used that the Riesz transforms are bounded on $\Lup^p\big((\textup{M}_{\alpha,\Theta}w)^{\frac{p}{q}}\big)$ since the weight
$$(\textup{M}_{\alpha,\Theta}w)^{\frac{p}{q}}\in\A_1.$$
However, this inequality is false by the example in the proof of Theorem \ref{generalmax}.
\end{proof}

\begin{proof}[Proof of Theorem~\ref{al>1endpt}]
Let $1<\alpha<n$. We begin with the convolution identity
\[
\textup I_\alpha=\textup I_{\alpha-1}\circ \textup I_1.
\]
We then apply the strong boundedness of $\textup I_{\alpha-1}$ from Theorem~\ref{generalmax}. Set
\[
\beta=\alpha-1, \qquad p=\frac{n}{n-1}, \qquad q=\frac{n}{n-\alpha},
\]
which satisfy the hypotheses of Theorem~\ref{generalmax}. In this case,
\[
n-\frac{q}{p}(n-\beta p)
= n-\frac{n-1}{n-\alpha}\Bigl(n-\frac{(\alpha-1)n}{n-1}\Bigr)
=0.
\]
Therefore,
\[
\|\textup I_\alpha f\|_{\Lup^{\frac{n}{n-\alpha}}(w)}
= \|\textup I_{\alpha-1}(\textup I_1 f)\|_{\Lup^{\frac{n}{n-\alpha}}(w)}
\le C \|\textup I_1 f\|_{\Lup^{\frac{n}{n-1}}\big((\textup M_{\Theta}w)^{\frac{n-\alpha}{n-1}}\big)},
\]
where
\[
\Theta(t)=t[\log(\mathrm e+t)]^{\frac{q}{p'}+\varepsilon}
= t[\log(\mathrm e+t)]^{\frac{1}{n-\alpha}+\varepsilon}.
\]
Next, by Theorem~\ref{LorenttGradientq},
\[
\|\textup I_1 f\|_{\Lup^{\frac{n}{n-1}}\big((\textup M_{\Theta}w)^{\frac{n-\alpha}{n-1}}\big)}
\le C \|\mathbf R f\|_{\Lup^1\big((\textup M(\textup M_{\Theta}w)^{\frac{n-\alpha}{n-1}})^{\frac{n-1}{n}}\big)}.
\]
Since $\frac{n-\alpha}{n-1}<1$, the weight $(\textup M_{\Theta}w)^{\frac{n-\alpha}{n-1}}$ belongs to $\A_1$ (this follows from a refinement of Theorem~\ref{MalA1} for Orlicz maximal operators). Consequently,
\[
\textup M\big((\textup M_{\Theta}w)^{\frac{n-\alpha}{n-1}}\big)(x)
\le C \big[\textup M_{\Theta}w(x)\big]^{\frac{n-\alpha}{n-1}},
\]
and combining the exponents yields
\[
\|\textup I_\alpha f\|_{\Lup^{\frac{n}{n-\alpha}}(w)}
\le C \|\mathbf R f\|_{\Lup^1\big((\textup M_{\Theta}w)^{1-\frac{\alpha}{n}}\big)}.
\]

\end{proof} 

\section{One weight Sobolev inequalities}\label{oneweight}

We return to the one weight Sobolev inequality \eqref{weightedSob},
\begin{equation}\label{onesobolev}
\left(\int_{\R^n} |u|^{p^*} w\dx\right)^{\frac1{p^*}}
\leq C\left(\int_{\R^n}|\nabla u|^p w^{1-\frac{p}{n}}\dx\right)^\frac1p,
\end{equation}
where we assume that $u\in \Lip_c(\R^n)$, $1\leq p<n$, and $p^*=\frac{np}{n-p}$. For a fixed $1\leq p<n$, let $\mathcal W_p$ denote the class of weights for which inequality \eqref{onesobolev} holds for all $u\in \Lip_c(\R^n)$.

The class $\A_{\frac{p^*}{n'}}$, consisting of weights such that
\[
\sup_Q\left(\fint_Q w\dx\right)^{\frac{1}{p^*}}
\left(\fint_Qw^{-\frac{p'}{p^*}}\dx\right)^{\frac1p}<\infty,
\]
characterizes the weighted boundedness of the Riesz potential $\textup{I}_1$, in the sense that
\[
\textup I_1:\Lup^p(w^{1-\frac{p}{n}})\rightarrow \Lup^{p^*}(w)
\iff  w \in \A_{\frac{p^*}{n'}}, \qquad 1<p<n,
\]
and
\[
\textup I_1:\Lup^1(w^{1-\frac{1}{n}})\rightarrow \Lup^{\frac{n}{n-1},\infty}(w)
\iff  w \in \A_1.
\]
By the subrepresentation formula \eqref{subrepresentation} (together with Theorem \ref{weakstrong} in the weak endpoint case), we have
\[
\A_{\frac{p^*}{n'}}\subseteq \mathcal W_p.
\]
While $\A_{\frac{p^*}{n'}}$ characterizes the boundedness of $\textup I_1$, it is not clear in general whether it also characterizes the weighted Sobolev inequality \eqref{weightedSob}.

As mentioned in the introduction, another approach uses strong $\A_\infty$ weights as defined by David and Semmes \cite{DS}. The class of strong $\A_\infty$ weights, sometimes denoted by $\A_\infty^*$, consists of all {doubling weights $w$ for which the quasidistance function
\[
\delta(x,y)=w(B_{x,y})^{\frac1n},
\]
where $B_{x,y}$ denotes the smallest Euclidean ball containing $x$ and $y$, it is equivalent to a metric on $\R^n$.} {This means, there exists a positive constant $C$ and a distance function $d$ on $\mathbb{R}^n$ such that 
\[
C^{-1}\delta(x,y) \leq d(x,y) \leq C \delta(x,y).
\]
Actually, this condition implies that strong $\A_\infty$ weights belongs to the class $\A_\infty$ (see \cite[Proposition 3.4]{Sem} and \cite{Ge}). Moreover, it can be shown that $\A_1 \subseteq \A^*_\infty$, but for each $p>1$, there is an $\A_p$ weight which is not strong $\A_\infty$; for more details about this see \cite{Sem} and \cite{DS}.}

In their work, David and Semmes showed that if $w\in \A_\infty^*$, then the weighted Sobolev inequality \eqref{weightedSob} holds. To demonstrate the inequality in the range $1<p<n$ they proved a weighted subrepresentation formula in the same spirit of  \eqref{subrepresentation} and for the case $p=1$ they used a corresponding weighted isoperimetric inequality.

Consequently,
\[
\A_{\frac{p^*}{n'}} \,\cup\, \A_\infty^* \subseteq \mathcal W_p.
\]

On the other hand, when $w$ is doubling, which we denote by $w\in \textup{DB}$, membership in $\A_\infty$ is in fact a necessary condition. To see this, test \eqref{weightedSob} with a cutoff function $\varphi$ that equals $1$ on a ball $B$, is supported in $2B$, and satisfies $|\nabla \varphi|\le C/r$, where $r$ is the radius of $B$. Then
\[
\left(\int_B w\dx\right)^{\frac1{p^*}}
\le \left(\int_{\R^n} |\varphi|\, w\dx\right)^{\frac1{p^*}}
\le C \left(\int_{\R^n} |\nabla \varphi|^p\, w^{1-\frac{p}{n}}\dx\right)^{\frac1p}
\le \frac{C}{r}\left(\int_{2B} w^{1-\frac{p}{n}}\dx\right)^{\frac1p}.
\]
Rewriting this estimate yields
\[
\fint_B w\dx \le C\left(\fint_B w^{\frac{p}{p^*}}\dx\right)^{\frac{p^*}{p}}.
\]
This shows that $w^{\frac{p}{p^*}}$ belongs to the reverse H\"older class $\textup{RH}_{\frac{p^*}{p}}$, which is known to be equivalent to $w\in \A_\infty$ (see \cite[Corollary 6.2]{SW}). In particular, we have shown that
\[
\mathcal W_p\cap\textup{DB}\subseteq \A_\infty.
\]

As a remark, we note that, as in the unweighted case, it suffices to establish the endpoint inequality corresponding to \( p = 1 \). We therefore record the following theorem, whose proof follows the same argument as in the unweighted setting (see \cite[Chapter 4]{EG}; see also \cite[Theorem 1.21]{PR1}).

\begin{theorem}\label{wp1imply}
If $w$ is a weight such that \eqref{weightedSob} holds for $p=1$ and all $u\in \Lip_c(\R^n)$, then the inequality holds for any $1<p<n$ and all $u\in \Lip_c(\R^n)$. That is, $\mathcal W_1\subseteq \mathcal W_p$ for $1<p<n$.
\end{theorem}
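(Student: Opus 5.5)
The plan is to reproduce the classical scaling argument, taking care with the weights. Assume $w\in\mathcal W_1$, i.e.
\[
\Big(\int_{\R^n}|v|^{n'}w\dx\Big)^{\frac1{n'}}\le C\int_{\R^n}|\nabla v|\,w^{1-\frac1n}\dx,\qquad v\in\Lip_c(\R^n),
\]
fix $1<p<n$ and $u\in\Lip_c(\R^n)$. By replacing $u$ with $|u|$ we may assume $u\ge0$. We apply the $p=1$ inequality to $v=u^\gamma$ with $\gamma=\frac{p(n-1)}{n-p}>1$, chosen so that $\gamma n'=p^*$. Since $\gamma>1$ and $u$ is Lipschitz with compact support, $v\in\Lip_c(\R^n)$ and $|\nabla v|=\gamma u^{\gamma-1}|\nabla u|$ almost everywhere.

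This gives
\[
\Big(\int u^{p^*}w\Big)^{\frac1{n'}}\le C\gamma\int u^{\gamma-1}|\nabla u|\,w^{1-\frac1n}\dx .
\]
The key step is to split the weight $w^{1-\frac1n}=w^{\frac1{p'}}\cdot w^{\frac1p-\frac1n}$ before applying H\"older with exponents $p'$ and $p$:
\[
\int u^{\gamma-1}w^{\frac1{p'}}\,|\nabla u|\,w^{\frac1p-\frac1n}\dx\le\Big(\int u^{(\gamma-1)p'}w\Big)^{\frac1{p'}}\Big(\int|\nabla u|^p w^{1-\frac pn}\Big)^{\frac1p}.
\]
One checks $(\gamma-1)p'=\frac{n(p-1)}{n-p}\cdot\frac{p}{p-1}=p^*$, and $(\frac1p-\frac1n)p=1-\frac pn$, which is exactly the weight on the right-hand side of \eqref{onesobolev}.

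This yields
\[
\Big(\int u^{p^*}w\Big)^{\frac1{n'}-\frac1{p'}}\le C\gamma\,\|\nabla u\|_{\Lup^p(w^{1-\frac pn})},
\]
and $\frac1{n'}-\frac1{p'}=\frac1p-\frac1n=\frac1{p^*}$, which is the claim with constant $C\gamma$. To divide, we need $\int u^{p^*}w<\infty$; this holds because $u$ is bounded with compact support and $w$ is locally integrable. The case $\int u^{p^*}w=0$ is trivial.

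The only delicate point is this finiteness and the chain rule for $u^\gamma$, and both are routine. The essential idea, and the step most likely to go wrong if done naively, is the weight splitting, which must make the exponents in the H\"older step match simultaneously. The arithmetic above shows that they do.
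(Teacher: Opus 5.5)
Your proposal is correct and follows the paper's proof essentially line for line: apply the $p=1$ inequality to $|u|^{q}$ with $q=\frac{p^*}{n'}=\frac{p(n-1)}{n-p}$, split $w^{1-\frac1n}=w^{\frac1{p'}}\,w^{\frac1p-\frac1n}$, apply H\"older with exponents $p'$ and $p$, and use $(q-1)p'=p^*$ and $\frac1{n'}-\frac1{p'}=\frac1{p^*}$. You also keep the chain-rule factor $\gamma$ (which the paper's display omits) and justify that $\int u^{p^*}w<\infty$ before dividing, which the paper leaves implicit.
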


\begin{proof}
Let $w$ be a weight such that
\begin{equation}\label{endptoneweight}
\|u\|_{\Lup^{\frac{n}{n-1}}(w)}\leq C\|\nabla u\|_{\Lup^1(w^{1-\frac1n})},
\qquad u\in \Lip_c(\R^n).
\end{equation}
Fix $1<p<n$ and set $q=\frac{p^*}{n'}>1$. Let $u\in \Lip_c(\R^n)$ and assume without loss of generality that $u$ is not identically zero. Since $u$ is bounded and $q>1$, we have $|u|^q\in \Lip_c(\R^n)$ and
\[
|\nabla |u|^q|\leq |u|^{q-1}|\nabla u|.
\]
Applying the endpoint inequality \eqref{endptoneweight} to $|u|^q$ yields
\begin{multline*}
\left(\int_{\R^n}|u|^{p^*} w\dx\right)^{\frac{1}{n'}}
=\left(\int_{\R^n}(|u|^{q})^{n'} w\dx\right)^{\frac{1}{n'}}
\leq C\int_{\R^n}|u|^{q-1}|\nabla u| w^{1-\frac1n}\dx \\
= C\int_{\R^n}|u|^{q-1}w^{\frac1{p'}}|\nabla u|w^{\frac1p-\frac1n}\dx
\leq C\left(\int_{\R^n}|\nabla u|^pw^{1-\frac{p}{n}}\dx\right)^{\frac1p}
\left(\int_{\R^n}|u|^{(q-1)p'}w\dx\right)^{\frac{1}{p'}}.
\end{multline*}
Since
\[
(q-1)p'=\left(\frac{p^*}{n'}-1\right)p'=p^*,
\]
reorganizing the inequality yields
\[
\left(\int_{\R^n}|u|^{p^*} w\dx\right)^{\frac{1}{n'}-\frac1{p'}}
\leq C\left(\int_{\R^n}|\nabla u|^pw^{1-\frac{p}{n}}\dx\right)^{\frac1p},
\]
which is exactly the desired Sobolev inequality since 
$$\frac{1}{n'}-\frac1{p'}=\frac1p-\frac1n=\frac1{p^*}.$$
\end{proof}

Finally, all of this leads to the following conjecture.

\begin{conj}\label{Ainftyconj}
Suppose $w\in \A_\infty$. Then the Sobolev inequality \eqref{weightedSob} holds, that is, $\A_\infty\subseteq \mathcal W_1$.
\end{conj}

To prove Conjecture \eqref{Ainftyconj}, it would suffice to show that $\A_\infty$ is sufficient for the weak endpoint inequality
\[
\|u\|_{\Lup^{\frac{n}{n-1},\infty}(w)}
\leq C\|\nabla u\|_{\Lup^1(w^{1-\frac1n})}.
\]






\end{document}